\newtheorem{thm}{Theorem}[section] 
\newtheorem{prop}[thm]{Proposition} 
\newtheorem{lem}[thm]{Lemma}
\theoremstyle{remark}
\newtheorem*{rmk}{Remark}
\theoremstyle{definition}
\newtheorem*{defi}{Definition}
\newtheorem*{ack}{Acknowledgments}
\title[cutoff phenomenon on complete multipartite graphs]{Cutoff phenomenon of the Glauber dynamics for the Ising model on complete multipartite graphs in the high temperature regime}
\author{Heejune Kim}
\address{School of Mathematics, University of Minnesota.
}
\email{kim01154@umn.edu}
\subjclass[2020]{60J10, 60K35, 82C20}
\keywords{Markov chains, Ising model, Mixing time, Cutoff, Coupling, Glauber dynamics, Heat-bath dynamics, Mean-ﬁeld model}
\begin{document}

\begin{abstract}
    In this paper, the Glauber dynamics for the Ising model on the complete multipartite graph $K_{np_1,\dots,np_m}$ is investigated where $0<p_i<1$ is the proportion of the vertices in the $i$th component.
    We show that the dynamics exhibits the cutoff phenomena at $t_n \colonequals \frac{1}{2(1-\beta/\beta_{cr})} n\ln n $ with window size $O(n)$ in the high temperature regime $\beta< \beta_{cr}$ where $\beta_{cr}$ is a constant only depending on $p_1,\dots,p_m$.
    Exponentially slow mixing is shown in the low temperature regime $\beta>\beta_{cr}$.
\end{abstract}

\maketitle

\section{Introduction and preliminaries} 
Informally, the \emph{cutoff phenomenon} is an abrupt transition of a Markov chain to its equilibrium when the system under consideration is sufficiently large (see Section \ref{subsection 1.2} for a rigorous definition).
To the author's knowledge, the first rapid mixing result appeared in \cite{diaconis} on the symmetric group while considering random transpositions.
Shortly afterward, \textcite{aldous} showed that the top-in-at-random card-shuffle precisely exhibits a cutoff phenomenon, initiating the whole industry of the cutoff phenomenon.

As pointed out in \cite{Lubetzky_2012}, only a few examples of cutoff were known regarding the Glauber dynamics of the Ising model (see Section \ref{subsection1.1} for formal definitions), such as that of \cites{ding}{levin} on complete graphs and of \cites{Lubetzky_2012}{lubetzky2012cutoff}{lubetzky2014} on lattices.
Recent researches have mainly focused on lattices. 
A breakthrough paper by \textcite{Lubetzky_2012} showed cutoff with a continuous-time window $O(\ln\ln n)$ for this longstanding problem.
An improvement on the window size to optimal $O(1)$ was made by the same authors in \cite{lubetzky2014} with the information percolation framework.
By the same technique, the authors illustrated the existence of cutoff in high enough temperatures for the Ising model of any sequence of graphs with a bounded degree in \cite{lubetzky2017}. 
Mean-field Potts model on complete graphs was comprehensively explored in \cite{potts}, again verifying the cutoff phenomenon in high temperatures.
For the bipartite Potts model, \textcite{Yevgeniy} proved the cutoff phenomena in the high temperatures using their aggregate path coupling method.

The purpose of this paper is to investigate the Glauber dynamics for the Ising model on complete multipartite graphs. 
(Exact definitions are given in the rest of the introduction.)
Indeed, we identify the critical temperature and establish cutoff in the high temperature regime.
On the other hand, exponentially slow mixing is established in the low temperature regime.
The significance of our setting is that complete multipartite graphs have an intermediate geometry between the complete graphs which have no geometry at all (e.g. \cite{levin}), and lattices which have a strong geometry (e.g. \cite{Lubetzky_2012}).
Thus, our result serves as a midway example between those two extreme cases.
The method of proof hinges on generalizations of the tools in \cite{levin}, notably the two-coordinate chain thereof.

Due to the nature of complete multipartite graphs, our model can be considered as a block spin Ising model with no interaction inside each block.
Such mean-field block models naturally occur in statistical physics when modelling metamagnets (see \cite{KINCAID197557}) and in studies on social interactions (see, e.g., \cite{Gallo}).
A recent paper by \textcite{Knöpfel2020} contains an excellent introduction to this line of work.

When it comes to cutoff phenomenon on finite graphs, it is easy to convert the discrete-time results to that of the continuous-time and vice versa.
Hence, we only consider discrete-time chains.

\subsection{Notations}
Boldface letters are used to denote vectors or matrices.
Inequalities between vectors and matrices are defined element-wise.
The dependence of any quantities on the number of vertices $n$ is understood throughout the paper.
Some important quantities not depending on $n$ will be explicitly mentioned.
We will write $\mathbf e_j$ to be the $j$th vector in the standard basis of $\mathbb R^m$.
The lower case $t$ will always denote time.
Let $\circ$ denote the Hadamard product between matrices.
More precisely, $B\circ C = (B_{ij} C_{ij})$ whenever $B=(B_{ij})$ and $C=(C_{ij})$ are matrices with the same dimensions.

\subsection{Ising model and Glauber dynamics}\label{subsection1.1}
Let $G=(V,E)$ be a finite graph with the vertex set $V$ and the edge set $E$.
Elements of $\Omega \colonequals \{\pm 1\}^V$ are called \emph{configurations}. 
In the absence of external fields, the \emph{Ising model} on $G$ is a distribution $\mu$ called the \emph{Gibbs distribution} on $\Omega$ given by \[\mu (\sigma) \colonequals \frac{e^{-\beta H(\sigma)}}{Z(\beta) }\] where $\sigma \in \Omega$, $\beta \geq 0$, $H(\sigma)=- \sum_{ij \in E}h_{ij}\sigma(i) \sigma(j)$, and $Z(\beta)$ is a normalizing factor.
Assuming an isotropic interaction strength between the vertices, we set $h_{ij}=1/|V|$.
The physical interpretation of $H(\sigma)$ is the energy of the whole spin system with the configuration $\sigma$.
We call each $\sigma(v)$ the \emph{spin} at \emph{site} $v$.

The \emph{Glauber dynamics} for the Ising model is a reversible Markov chain with respect to the Gibbs distribution satisfying the following rule. 
At each time, choose a site uniformly at random in $V$ and update the spin at the chosen site according to $\mu$ conditioned on the set of configurations having the same spins at all the sites except the chosen one.
The Glauber dynamics for the Gibbs distribution $\mu$ is irreducible, aperiodic, and reversible with $\mu$ as its unique stationary distribution.
For the Ising model, it is easy to see that the probability of updating to $\pm 1$ at the chosen site $v$ is $r_{\pm}(S)$ where \begin{gather}
    r_{\pm}(x) \colonequals \frac{e^{\pm \beta x}}{ e^{\beta x}+e^{-\beta x}} = \frac{1 \pm \tanh (\beta x)}{2}; \quad x\in\mathbb R  \label{r+}
\end{gather}  and $S=\sum_{vv' \in E} \sigma(v')/|V|$ is the mean-field at $v$.

\subsection{Markov chain mixing and cutoff phenomenon} \label{subsection 1.2}
The \emph{total variation distance} between two probability measures $\nu_1$ and $\nu_2$ on $\Omega$ is defined by \[\|\nu_1 -\nu_2\|_{TV} \colonequals \sup_{A\subseteq \Omega}|\nu_1(A)-\nu_2(A)|= \frac{1}{2}\sum_{x\in \Omega}|\nu_1(x)-\nu_2(x)|.\]
The total variation distance is half of the $L^1$-distance between the probability measures.

Let $({\sigma_{t}})$ be the Markov chain of the Glauber dynamics for the Ising model.
Define the worst-case total variation distance of the chains to the stationary distribution $\mu$ at time $t$ by \[d(t) \colonequals \max_{\sigma \in \Omega} \|\mathbb P_{\sigma }({\sigma_{t}} \in \cdot) -\mu\|_{TV}\] where here and thereafter $\mathbb P_{\sigma}$ denotes the probability given $\sigma_0 =\sigma$.
The \emph{mixing time} is defined by \[t_{\mathrm{mix}}(\varepsilon) \colonequals \min\{t : d(t) \leq \varepsilon\}; \quad \varepsilon \in (0,1).\]

We say a sequence of Markov chains with corresponding mixing times $t_{\mathrm{mix}}^{(n)}(\varepsilon)$ exhibit a \emph{cutoff phenomenon} if for every $0<\varepsilon<1/2$, \[\lim_{n \to \infty} \frac{t_{\mathrm {mix}}^{(n)}(\varepsilon)}{t_{\mathrm mix}^{(n)}(1-\varepsilon)}=1.\]
Furthermore, we say that the cutoff occurs at $t_{\mathrm {mix}}^{(n)}$ with \emph{window size} $O(w_n)$ if $w_n= o(t_{\mathrm {mix}}^{(n)})$ and \begin{gather*}
    \lim_{\gamma \to \infty }\liminf_{n \to \infty }d_n(t_{\mathrm {mix}}^{(n)}-\gamma w_n)=1,\quad \lim_{\gamma \to \infty }\limsup_{n \to \infty }d_n(t_{\mathrm {mix}}^{(n)}+\gamma w_n)=0.
\end{gather*}

\subsection{Magnetization chain on complete multipartite graphs}
Now, we are in a place to consider a complete $m$-partite graph, a graph whose vertices are partitioned into $m$ different independent sets, and every pair of vertices from different independent sets is connected by an edge. 
Each edge represents an interaction between the vertices.
Denote this graph by $K_{np_1,np_2,\dots, np_m}$ which has $n$ vertices and $m$ partitions where $\sum_{i=1}^m p_i =1$ and $p_i>0$ for $i=1,2,\dots, m$.
We fix the parameters $m$ and $p_i$'s hereafter.
Without loss of generality, we assume $p_1 \leq p_2 \leq \dots \leq p_m$.
We may also assume that $np_i \in \mathbb N$ for every $i$ so that $K_{np_1,np_2,\dots, np_m}$ is well defined whenever such considerations are required.
Let $V = \bigcup_{i=1}^m J_i$ be the set of all vertices where $J_i$ denotes the set of the $i$th partition of the vertices. 
Note $np_i = |J_i|$. 

We define $\Omega_i \colonequals \{\pm 1\}^{J_i}$ for $i=1,\dots,m$ so that $\Omega= \prod_{i=1}^m \Omega_i$ is our configuration space.
Each configuration $\sigma \in \Omega$ has a unique representation $(\sigma^{(1)},\dots,\sigma^{(m)}) \in \prod_{i=1}^m \Omega_i$ and both representations are understood throughout this paper.

For each $\sigma \in \Omega$, define the \textit{magnetization} on $J_i$ by $S^{(i)}(\sigma) \colonequals \sum_{v \in J_i} \sigma(v) /n$, $i=1,\dots,m$.
For the Markov chain $(\sigma_t)_{t \geq 0}=(\sigma_t^{(1)},\dots,\sigma_{t}^{(m)})_{t\geq 0}$ starting at $\sigma=(\sigma^{(1)},\dots,\sigma^{(m)}) \in \prod_{i=1}^m \Omega_i$, we define the corresponding magnetization on $J_i$ by \[S_{t}^{(i)} \colonequals \frac{1}{n}\sum_{v\in V_i}\sigma_{t}^{(i)}(v) \ \text{for}\ i\in \{1,\dots,m\}, \ t\geq 0.\]
We sometimes use the vector notation $\mathbf S_t \colonequals (S^{(1)}_t, \dots, S^{(m)}_{t})$ for $t\geq 0$.
We call the process $(\mathbf S_t)_{t\geq 0}$ a \emph{magnetization chain}.
\thref{magmarkov} shows that $(\mathbf S_t)_{t\geq 0}$ is in fact a Markov chain. 
Note that it is a projection of the whole Markov chain $({\sigma_{t}})_{t\geq 0}$, so mixing of the whole chain $({\sigma_{t}})_{t\geq 0}$ implies the mixing of the chain $(\mathbf S_t)_{t\geq 0}$.
Our aim is to show the converse in a certain sense.

\subsection{Main results}
Given the above definitions and notations, our main result establishes the cutoff phenomenon on complete multipartite graphs.

\begin{thm}[Main result]
    For $m\in\mathbb N$ and $p_i>0$ such that $\sum_{i=1}^m p_i =1$, the Glauber dynamics for the Ising model on the complete multipartite graph $K_{np_1,\dots, np_m}$ exhibits a cutoff at $\frac{1}{2(1-\beta/\beta_{cr})} n\ln n$ with window size $O(n)$ in the high temperature regime $\beta<\beta_{cr}$ where  $\beta_{cr}=\beta_{cr}(p_1,\dots,p_m)$ is a constant defined in equation \eqref{betacrdef}.
\end{thm}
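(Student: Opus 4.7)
The plan is to adapt the two-coordinate chain technique of Levin--Luczak--Peres to the $m$-component magnetization chain $(\mathbf S_t)$, establishing matching upper and lower bounds on the mixing time. The starting point is the drift identity for the magnetization chain. A direct computation from the Glauber update rule gives
\[
\mathbb E\bigl[\mathbf S_{t+1}-\mathbf S_t \,\big|\, \mathbf S_t = \mathbf s\bigr] \;=\; \frac{1}{n}\bigl(-\mathbf s + \mathbf p \circ \tanh(\beta M\mathbf s)\bigr),
\]
where $M$ is the $m\times m$ matrix with $M_{ii}=0$ and $M_{ij}=1$ for $i\ne j$, so that $(M\mathbf s)_i$ is the mean field at any vertex in $J_i$, and $\tanh$ is applied coordinate-wise. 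Linearizing $\tanh(\beta x)\approx \beta x$ near $\mathbf 0$, the drift is dominated by $-\tfrac{1}{n}(I-\beta DM)\mathbf s$ with $D=\operatorname{diag}(\mathbf p)$. This identifies $\beta_{cr}$ as $1/\lambda_{\max}(DM)$; note that $DM$ is similar to the symmetric matrix $D^{1/2}MD^{1/2}$, so Perron--Frobenius supplies a nonnegative top left eigenvector $\mathbf u$ for $DM$.

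For the upper bound I would employ a two-phase coupling. \emph{Phase I} couples the magnetizations: running two monotone-coupled copies of the Glauber chain, the weighted norm $\|\mathbf s\|_\star := \mathbf u \cdot |\mathbf s|$ of the magnetization difference contracts by a factor $1-\tfrac{1}{n}(1-\beta/\beta_{cr})$ per step, after a short burn-in that brings $\|\mathbf S_t\|_\star = o(1)$ and makes the $\tanh$ linearization valid up to a controlled second-order error. Iterating for $t = t_n + \gamma n$ steps and applying Markov's inequality forces the magnetizations of the two coupled chains to agree with high probability, since $\|\mathbf S_t-\mathbf S'_t\|_\star$ is either zero or at least a positive constant times $1/n$. \emph{Phase II} then couples the full configurations given equal magnetizations: because the update probability at the chosen site depends only on the common magnetization, the two updates can be coupled to produce identical spins at the chosen site, and a coupon-collector argument on the un-updated sites shows that $O(n)$ further steps suffice to couple all configurations with high probability.

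The lower bound is proved by tracking the magnetization from the all-plus start $\sigma \equiv +1$, for which $\mathbf S_0 = \mathbf p$. The drift identity shows that $\mathbb E[\mathbf S_t]$ decays along the Perron direction at rate $(1-\beta/\beta_{cr})/n$, so at $t = t_n - \gamma n$ the quantity $\mathbf u \cdot \mathbb E[\mathbf S_t]$ is of order $e^{\gamma(1-\beta/\beta_{cr})}/\sqrt{n}$; on the other hand, under $\mu$ the projected magnetization $\mathbf u \cdot \mathbf S$ has mean zero and standard deviation $O(1/\sqrt{n})$. A second-moment recursion bounding $\operatorname{Var}(\mathbf u \cdot \mathbf S_t)$ together with Chebyshev's inequality then separates the two laws in total variation as $\gamma \to \infty$.

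The main obstacle I expect is the vector analysis in Phase I: since $DM$ is not symmetric, the contraction must be proved in a norm that is compatible with both the spectral structure of $DM$ and the natural monotone coupling, which is what motivates using the weighted $\ell^1$ norm built from the nonnegative left eigenvector $\mathbf u$. Controlling the quadratic remainder of the $\tanh$ nonlinearity so that it does not accumulate faster than the contraction also requires the burn-in phase: away from the origin the linearization is simply false, so one must first prove that $\mathbf S_t$ enters the regime $\|\mathbf S_t\|_\star = o(1)$ within $O(n)$ steps before the contraction estimate can be safely iterated.
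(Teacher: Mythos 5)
Your high-level outline — drift identity, identification of $\beta_{cr}$ via the symmetrized matrix $D^{1/2}MD^{1/2}$, a two-phase coupling for the upper bound, a distinguishing statistic for the lower bound — matches the paper's strategy, which is indeed a generalization of Levin--Luczak--Peres. The correct linearization and the weighted $\ell^1$ norm from the left Perron eigenvector are also on target. However, both phases of your upper bound argument have genuine gaps that would make the window $O(n)$ claim fail.

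\textbf{Phase I.} After $t_n$ steps the contraction only gives $\mathbb E\|\mathbf S_t-\mathbf S'_t\|_\star \le g^{t_n}\cdot O(1) = O(n^{-1/2})$, and after $\gamma n$ additional steps this is $O(e^{-\gamma\upsilon}n^{-1/2})$. Since the smallest nonzero value of $\|\mathbf S_t-\mathbf S'_t\|_\star$ is order $1/n$, Markov's inequality gives $\mathbb P(\mathbf S_t\neq \mathbf S'_t) \le O(e^{-\gamma\upsilon}\sqrt n)$, which diverges with $n$. Exponential contraction alone cannot kill an initial discrepancy of order $n$ down to less than $1/n$ in time $t_n + O(n)$. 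The paper instead runs the contraction only down to expected discrepancy $O(\sqrt n)$ and then switches to a hitting-time argument: it constructs a coupling in which the weighted discrepancy is a nonnegative supermartingale with conditional variance bounded below, and invokes a supermartingale hitting-time lemma (their Lemma 4.2) to conclude the magnetizations coalesce in $O(n)$ additional steps with probability $1-O(\gamma^{-1/2})$. That variance-driven random-walk step is the mechanism behind the $O(n)$ window; some version of it is unavoidable.

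\textbf{Phase II.} The coupon-collector argument does not give $O(n)$: touching every site takes $\Theta(n\ln n)$ steps, and even if that were acceptable it would move the cutoff location by a $\Theta(n\ln n)$ amount and destroy the claimed constant. More fundamentally, coalescing all spins is the wrong target. Since the chain and the stationary measure are both exchangeable within each block and within each $\{\pm 1\}$ class of the reference configuration, the total variation distance is entirely captured by the induced $2m$-coordinate chain $(U^{(i)}_t, V^{(i)}_t)_{i\le m}$. One therefore only needs to couple the $2m$-coordinate chains. Under a suitable matched-site coupling that preserves magnetization agreement, the discrepancy $R^{(i)}_t = U'^{(i)}_t - U^{(i)}_t$ is a martingale (up to a contracting drift $-R^{(i)}_t/n$) with bounded increments and conditional variance bounded below on a high-probability event. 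Starting from $|R^{(i)}_{t_n(\gamma)}| = O(\sqrt n)$ (which follows from the variance bound for the magnetization), the same supermartingale hitting-time lemma yields coalescence of the $2m$-coordinates in $O(n)$ steps. Without the reduction to the coordinate chain, the $O(n)$ window is out of reach.

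\textbf{Lower bound.} Your computation of the drift of $\mathbf u\cdot\mathbb E[\mathbf S_t]$ is correct to leading order, but the quadratic remainder in $\tanh$ accumulates under iteration. Starting from $\mathbf S_0 = \mathbf p$, the accumulated error is controlled only when $3(1-\beta/\beta_{cr}) > \beta^2$, which can fail as $\beta\uparrow\beta_{cr}$. The fix is to start from $\mathbf s = \zeta\mathbf p$ with $\zeta$ a small constant chosen so that $3\upsilon > \beta^2\zeta$; this preserves the order of $\mathbf u\cdot\mathbb E[\mathbf S_{t_*}]$ while keeping the remainder subdominant, after which the second-moment/Chebyshev separation you propose goes through.
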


\begin{thm}\thlabel{lowtempresult}
In the low temperature regime $\beta > \beta_{cr}$, the dynamics is exponentially slow mixing, i.e., $t_{\mathrm{mix}}\geq C_1 \exp(C_2 n)$ for some constants $C_1,\ C_2 >0$ not depending on $n$. 
\end{thm}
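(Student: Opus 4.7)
The plan is to prove exponentially slow mixing via the standard bottleneck (Cheeger) inequality
\begin{equation*}
t_{\mathrm{mix}}(1/4) \;\geq\; \frac{1}{4\,\Phi_\star}, \qquad \Phi_\star \;\colonequals\; \inf_{\mu(A)\leq 1/2}\frac{Q(A,A^c)}{\mu(A)},
\end{equation*}
with edge measure $Q(\sigma,\sigma')=\mu(\sigma)P(\sigma,\sigma')$. The entire task reduces to exhibiting a $\mathbb{Z}_2$-symmetric set $A\subset\Omega$ with $\mu(A)$ bounded away from $0$ and $Q(A,A^c)\leq C_1 e^{-C_2 n}$.

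First I would pass to the magnetization chain $(\mathbf{S}_t)$, which is Markov by \thref{magmarkov}, and compute its stationary measure. A direct count combined with Stirling's formula gives, uniformly in $\mathbf{s}$ in the magnetization polytope,
\begin{equation*}
\pi(\mathbf{s}) \;=\; \frac{1}{Z}\exp\!\bigl(-n F(\mathbf{s}) + O(\log n)\bigr),
\end{equation*}
with free-energy functional
\begin{equation*}
F(\mathbf{s}) \;\colonequals\; -\beta\!\!\!\sum_{1\leq k<\ell\leq m}\!\!\! s_k s_\ell \;-\; \sum_{i=1}^m p_i\,h\!\Bigl(\tfrac{1+s_i/p_i}{2}\Bigr), \qquad h(x)=-x\log x-(1-x)\log(1-x).
\end{equation*}
A Taylor expansion at $\mathbf{s}=\mathbf{0}$ shows $\nabla F(\mathbf{0})=\mathbf{0}$ and that the Hessian is a symmetric matrix with diagonal entries $1/p_i$ and off-diagonal entries $-\beta$; the critical $\beta_{cr}$ appearing in \eqref{betacrdef} is precisely the threshold at which this Hessian first fails to be positive definite. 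For any $\beta>\beta_{cr}$ there thus exists a unit vector $\mathbf{v}\in\mathbb{R}^m$ satisfying $\mathbf{v}^{\top}\nabla^2 F(\mathbf{0})\mathbf{v}<0$, so $F$ strictly decreases from $\mathbf{0}$ along $t\mathbf{v}$; combined with boundedness on the compact magnetization polytope and the reflection symmetry $F(-\mathbf{s})=F(\mathbf{s})$, this yields global minimizers $\pm\mathbf{s}^{\star}\neq\mathbf{0}$ with $F(\pm\mathbf{s}^{\star})<F(\mathbf{0})$.

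The bottleneck itself is the hyperplane separating $\mathbf{s}^{\star}$ from $-\mathbf{s}^{\star}$. Set $A\colonequals\{\sigma:\mathbf{v}\cdot\mathbf{S}(\sigma)>0\}$. The $\sigma\mapsto-\sigma$ symmetry of $\mu$ forces $\mu(A)\leq 1/2$, and since the slab $\{\mathbf{v}\cdot\mathbf{S}=0\}$ has exponentially small mass (shown below), $\mu(A)\geq 1/2-o(1)$. Any Glauber transition $\sigma\to\sigma'$ with $\sigma\in A$ and $\sigma'\in A^c$ flips exactly one spin, forcing $\mathbf{S}(\sigma)$ to lie in the $O(1/n)$-slab $\mathcal{B}_n\colonequals\{\mathbf{s}:|\mathbf{v}\cdot\mathbf{s}|\leq 2\|\mathbf{v}\|_\infty/n\}$, whence
\begin{equation*}
Q(A,A^c) \;\leq\; \mu(\mathbf{S}\in\mathcal{B}_n) \;\leq\; \mathrm{poly}(n)\,\exp\!\Bigl(-n\bigl[\min_{\mathbf{s}\in\mathcal{B}_n}F(\mathbf{s}) - F(\mathbf{s}^{\star})\bigr]\Bigr),
\end{equation*}
after using $Z\geq \exp(-nF(\mathbf{s}^{\star})-O(\log n))$. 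It therefore suffices to establish a uniform (in $n$) free-energy barrier
\begin{equation*}
\eta \;\colonequals\; \min_{\mathbf{v}\cdot\mathbf{s}=0} F(\mathbf{s}) \;-\; F(\mathbf{s}^{\star}) \;>\; 0;
\end{equation*}
substituting into the Cheeger inequality gives $t_{\mathrm{mix}}\geq C_1 e^{C_2 n}$ with $C_2=\eta/2$.

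The hardest part is verifying $\eta>0$ for \emph{every} $\beta>\beta_{cr}$, not only for $\beta$ just above criticality. As $\beta$ grows, $\nabla^2 F(\mathbf{0})$ may acquire additional negative eigenvalues, in which case the global minimizer set of $F$ can split into more than the two antipodal components $\pm\mathbf{s}^{\star}$, and the direction $\mathbf{v}$ must be chosen so that $\{\mathbf{v}\cdot\mathbf{s}=0\}$ actually separates \emph{some} antipodal pair of minimizers with a strictly positive barrier. This can be resolved by a compactness and real-analyticity argument on the compact magnetization polytope, exploiting that the critical set of $F$ is finite and that the strict inequality $F(\mathbf{s}^{\star})<F(\mathbf{0})$ persists throughout the whole low-temperature regime.
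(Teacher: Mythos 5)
Your proposal uses the same bottleneck inequality as the paper, but the two arguments diverge in a consequential way, and yours has a gap exactly at the step you flag as ``the hardest part.''

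The paper works with the \emph{fixed} cut $A = \{\sigma : \sum_i S^{(i)}(\sigma) > 0\}$. It bounds the boundary mass crudely, $\mu(B) \lesssim \binom{n}{\lceil n/2\rceil}/Z(\beta) \asymp e^{n\ln 2}/Z(\beta)$, and then bounds $Z(\beta)\mu(A)$ from \emph{below} by the weight of a single magnetization level, $k_i = 1/2 + \gamma a_i$, slightly displaced from the origin in the Perron eigenvector direction $\mathbf a$. Writing $Z(\beta)\mu(A) \gtrsim e^{n f(\gamma)}$, the key computation is local and entirely concrete: $f(0) = \ln 2$, $f'(0)=0$, and $f''(0)>0$ if and only if $\beta > \beta_{cr}$ (using \thref{slowmixinglemma}). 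So $f(\gamma) > \ln 2$ for some small $\gamma$, and $\Phi \lesssim e^{n(\ln 2 - f(\gamma))}$ decays exponentially. No global minimizer of the free energy is ever identified, and no barrier estimate over the separating hyperplane is required.

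Your proposal instead tries to locate the global minimizers $\pm\mathbf s^\star$ of the mean-field free energy $F$ and to prove the \emph{uniform barrier} $\eta = \min_{\mathbf v\cdot\mathbf s=0} F(\mathbf s) - F(\mathbf s^\star) > 0$ over the chosen cut hyperplane. This is strictly harder than what the paper needs, and the gap is genuine. You need to rule out that some global minimizer of $F$ lies on (or limits onto) the hyperplane $\{\mathbf v\cdot\mathbf s=0\}$; if one does, $\eta=0$ and the bound collapses. This is not resolved by invoking ``compactness and real-analyticity'': compactness gives that the infimum over the hyperplane slice is attained, but says nothing about whether that attained value strictly exceeds $\min F$; ``the critical set is finite'' is asserted, not proved, and is not actually the relevant point. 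You would need a separate argument (e.g.\ that the minimizers of $F$ have all coordinates of the same sign, a ferromagnetic-landscape fact) to guarantee the hyperplane misses them, and you would need this for \emph{every} $\beta > \beta_{cr}$, not just near criticality. Note also that your estimate $\mu(A) \geq 1/2 - o(1)$ itself requires the slab mass to be exponentially small, which in turn requires $\eta>0$, so that part of the write-up is conditional on exactly the unproven claim. The paper's route sidesteps all of this: since it only needs one magnetization level in $A$ to outweigh the boundary count, the whole global-landscape analysis is unnecessary.
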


A few remarks are in order.
Our main result is obtained as a consequence of \thref{upperbound} and \thref{lowerbound}. 
In the low temperature regime $\beta>\beta_{cr}$, the mixing time is exponentially slow, therefore identifying the critical temperature $\beta_{cr}$. 
In the $m=1$ case, there are no spin interactions so the chain is equivalent to the lazy random walk on an $n$-dimensional hypercube, which has a cutoff at $(n\ln n) /2 $ with window size $O(n)$ (see \parencite[]{hypercube} or \parencite[Chapter 18]{Peres}).
This result can be seen as a consequence of our main result since $m=1$ implies $\beta_{cr}=\infty$  (see equation \eqref{betacrdef}).

\subsection{Organization of the article}
As mentioned earlier, our proof is based on the ideas of \textcite{levin}.
We assume high temperatures until Section \ref{section6}.
We first observe that the magnetization chain is a Markov chain in its own right (\thref{magmarkov}).
A suitable scaling of the magnetization chain leads to a contraction property (\thref{normcontraction}).
This in turn gives a uniform variance bound of magnetizations in time (Sections \ref{section2} and \ref{section3}).
In Section \ref{section4}, we construct a coupling of the magnetization chain so that it couples in $\frac{1}{2(1-\beta/\beta_{cr})} n\ln n+O(n)$ steps with high probability.
After the magnetization coupling phase, by considering the "$2m$-coordinate chain" inspired by \cite{levin}, we can construct a post magnetization coupling to reach the full-mixing in another $O(n)$ steps.
This proves the upper bound (\thref{upperbound}).
We construct a suitable distinguishing-statistic of the magnetization chain \cite[see][Chapter 7.3]{Peres} to obtain the lower bound (\thref{lowerbound}). 
These upper and lower bound results establish the cutoff in the high temperature regime.
Exponentially slow mixing in the low temperature regime is shown in Section \ref{section6}.

\section{Contraction of the magnetization chain in high temperatures}\label{section2}
We describe the \emph{monotone coupling}.
Let $I$ and $U$ be independent uniform random variables over $V$ and $[0,1]$, respectively.
We consider the collection of Markov chains with starting configurations $\sigma \in \Omega$. 
Simultaneously define the next configurations at time $t=1$ by \[\sigma_{1} (i)=  \begin{cases}  \sigma (i) & \text{if $I \neq i$} \\ \mathbbm 1_{U < r_+(\sum_{j\neq k}S^{(j)}(\sigma))} -\mathbbm 1_{U \geq r_+(\sum_{j\neq k}S^{(j)}(\sigma))}  & \text{if $I=i \in J_k$}  \end{cases} \quad  \]
where $r_+$ is defined in equation \eqref{r+}.
Repeat this procedure independently for each time.
It is clear that each Markov chain $(\sigma_t)_{t \geq 0}$ above is a version of the Glauber dynamics on the complete multipartite graph with starting state $\sigma$'s, defined on a common probability space.
The above coupling is called a \emph{monotone coupling} in the sense that if $\sigma \leq \tilde \sigma$ are starting states for $(\sigma_t)_{t\geq 0}$ and $(\tilde \sigma _t)_{t\geq 0}$, respectively, then $S^{(i)}(\sigma) \leq S^{(i)} (\tilde \sigma )$ for $i=1,\dots,m$ so that $\sigma_{1} \leq \tilde \sigma_{1}$, and $\sigma_t \leq \tilde \sigma_t$ for any $t\geq 0$ accordingly. 

Define \[\mathcal S \colonequals \prod_{i=1}^m\{-p_i, -p_i+2/n,\dots, p_i\}.\]
\begin{prop}[Magnetization chain]\thlabel{magmarkov}
The process $(S^{(1)}_t, \dots, {S^{(m)}_{t}})_{t \geq 0}$ is a \\Markov chain on the magnetization state space $\mathcal S$. 
\end{prop}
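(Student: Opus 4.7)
The plan is to verify the lumpability criterion: since $(\sigma_t)_{t\geq 0}$ is Markov, its projection $\mathbf{S}_t=\mathbf{S}(\sigma_t)$ is itself Markov as soon as, for every pair $\sigma,\tilde\sigma\in\Omega$ with $\mathbf{S}(\sigma)=\mathbf{S}(\tilde\sigma)$ and every $\mathbf{s}'\in\mathcal{S}$, one has $\mathbb{P}_\sigma(\mathbf{S}_1=\mathbf{s}')=\mathbb{P}_{\tilde\sigma}(\mathbf{S}_1=\mathbf{s}')$. I would simply compute the one-step distribution of $\mathbf{S}_1$ given $\sigma_0=\sigma$ and read off that the resulting formula depends on $\sigma$ only through $\mathbf{s}\colonequals\mathbf{S}(\sigma)$; the transition kernel of $(\mathbf{S}_t)$ is then forced to equal this expression.

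Writing $\mathbf{s}=\mathbf{S}(\sigma)$ and recalling that a single Glauber update alters at most one spin, I note that $\mathbf{S}_1-\mathbf{S}_0 \in \{\mathbf{0}\}\cup\bigl\{\pm(2/n)\mathbf{e}_k:k=1,\dots,m\bigr\}$. I then condition on the event that the chosen site $v$ lies in $J_k$, which has probability $|J_k|/|V|=p_k$. Given this event, $v$ is uniform on $J_k$, and the numbers of vertices in $J_k$ currently carrying spin $+1$ and $-1$ equal $n(p_k+s_k)/2$ and $n(p_k-s_k)/2$ respectively, both functions of $s_k$ alone; so the probability of picking a $+1$ (resp.\ $-1$) vertex in $J_k$ is $(p_k+s_k)/2$ (resp.\ $(p_k-s_k)/2$). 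Moreover, since every neighbour of a site in $J_k$ lies outside $J_k$, the mean-field at $v$ is precisely $\sum_{j\neq k}s_j$, so by \eqref{r+} the probability that $v$ is updated to $\pm1$ is $r_\pm(\sum_{j\neq k}s_j)$, again a function of $\mathbf{s}$ alone. Multiplying the two factors yields, for each $k$,
\begin{gather*}
\mathbb{P}_\sigma\bigl(\mathbf{S}_1=\mathbf{s}+\tfrac{2}{n}\mathbf{e}_k\bigr)=\tfrac{1}{2}(p_k-s_k)\,r_+\!\bigl(\textstyle\sum_{j\neq k}s_j\bigr),\\
\mathbb{P}_\sigma\bigl(\mathbf{S}_1=\mathbf{s}-\tfrac{2}{n}\mathbf{e}_k\bigr)=\tfrac{1}{2}(p_k+s_k)\,r_-\!\bigl(\textstyle\sum_{j\neq k}s_j\bigr),
\end{gather*}
while the holding probability at $\mathbf{s}$ is one minus the sum of the above displacement probabilities.

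Because these transition probabilities are explicit functions of $\mathbf{s}$ only, the lumpability criterion is verified and $(\mathbf{S}_t)_{t\geq 0}$ is a Markov chain on $\mathcal{S}$ with the displayed kernel. There is no genuine obstacle here; the one bookkeeping point is the counting identity which determines, given $|J_k|=np_k$ and the block magnetization $s_k$, both the number of $+1$ and of $-1$ spins in $J_k$. This is precisely what collapses the two layers of randomness (uniform choice of $v$ and the Gibbs update at $v$) into a function of $\mathbf{s}$, regardless of which particular configuration $\sigma$ realises that magnetization.
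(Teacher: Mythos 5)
Your proof is correct and follows essentially the same approach as the paper: compute the one-step transition probabilities of the projected chain and observe that they depend on the starting configuration only through its magnetization vector, so the projection is Markov (the paper writes out one transition and notes the rest are analogous, while you invoke lumpability by name and display the full kernel). One trivial wording slip: given $v\in J_k$, the conditional probability of hitting a $+1$ site is $(p_k+s_k)/(2p_k)$ rather than $(p_k+s_k)/2$, but this cancels against the factor $p_k$ for choosing $J_k$, and your displayed transition probabilities agree with the paper's.
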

\begin{proof}
Note that \begin{align*}
\mathbb P\bigl(({S^{(1)}_{t+1}},\dots, {S^{(m)}_{t+1}} )=(S^{(1)}_{t}  -\frac{2}{n},\dots, {S^{(m)}_{t}})\bigr)&= p_1 \frac{n{S^{(1)}_{t}} +|J_1|}{2|J_1|} r_- \Bigl(\sum_{j\neq 1}{S^{(j)}_{t}}\Bigr)
\\&=\frac{p_1+{S^{(1)}_{t}}}{2} r_-\Bigl(\sum_{j\neq 1}{S^{(j)}_{t}}\Bigr)\end{align*} is measurable with respect to the $\sigma$-algebra generated by $({S^{(1)}_{t}},\dots, {S^{(m)}_{t}})$.
Other cases can be dealt with similarly.
\end{proof}
\begin{rmk}
By symmetry, $({S^{(1)}_{t}},\dots, {S^{(m)}_{t}})$ starting from $\sigma$ and $(-{S^{(1)}_{t}},\dots, -{S^{(m)}_{t}})$ starting from $-\sigma$ have the same distributions. 
This can also be seen by the physical fact that the map $\sigma \mapsto -\sigma$ just corresponds to flipping the reference axis to which we are measuring the spins of each site.
This does not change the dynamics of the spin system.
\end{rmk}

\begin{defi}[Hamming distance]
For two configurations $\sigma$ and $\sigma '$, denote the Hamming distance by $\mathrm{dist}(\sigma, \sigma') \colonequals \frac{1}{2}\sum_{k \in V} |\sigma(k)-\sigma'(k)|$.
\end{defi}
\begin{rmk}
This is a metric on $\Omega$, which is equal to the number of sites with different spins for two configurations.
Similarly, we can define $\mathrm{dist}_i$ on $\Omega_i$, respectively, but $\mathrm{dist}_i$'s merely satisfy the triangle inequality.
\end{rmk}

\begin{lem}[Contraction in mean for monotone coupling]\thlabel{monotonecontraction}
For a monotone coupling $({\sigma_{t}}, {\sigma'_{t}} )_{t \geq 0}$ starting at $(\sigma,\sigma') = ((\sigma^{(1)},\dots,\sigma^{(2)}), (\sigma'^{(1)},\dots,\sigma'^{(2)} ))$, we have \[\begin{pmatrix}\mathbb E \mathrm{dist}_1({\sigma^{(1)}_t},{\sigma'^{(1)}_t})\\ \vdots \\  \mathbb E\mathrm{dist}_m({\sigma^{(m)}_{t}},{\sigma'^{(m)}_t}) \end{pmatrix} \leq \mathbf A^t \begin{pmatrix} \mathrm{dist}_1(\sigma^{(1)}, \sigma_0') \\ \vdots \\ \mathrm{dist}_m(\sigma^{(m)},\sigma'^{(m)}) \end{pmatrix}\] where \[\mathbf A=\mathbf A_n \colonequals \begin{pmatrix}a&b_1&b_1&\dots&b_1
\\ b_2 & a&b_2 &\dots&b_2 
\\b_3&b_3&a&\dots&b_3 
\\\vdots &\dots&&&\vdots
\\b_m &\dots&&\dots&a
\end{pmatrix}\] with 
$a\colonequals 1-1/n$, $b_k \colonequals p_k {\beta }/{n}$.
\end{lem}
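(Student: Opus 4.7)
The plan is to bound the expected per-coordinate Hamming distance one step at a time, showing $\mathbb E \mathbf D(t+1) \le \mathbf A \, \mathbb E \mathbf D(t)$ componentwise, where $D_k(t) \colonequals \mathrm{dist}_k(\sigma^{(k)}_t, \sigma'^{(k)}_t)$, and then iterating using the nonnegativity of $\mathbf A$. Since $\mathbf A$ has nonnegative entries, the componentwise one-step contraction propagates to $\mathbb E \mathbf D(t) \le \mathbf A^t \mathbf D(0)$ by a straightforward induction.

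First I would fix $k$ and condition on $\mathcal F_t$. Writing $T \colonequals \sum_{j\neq k} S^{(j)}(\sigma_t)$ and $T' \colonequals \sum_{j\neq k} S^{(j)}(\sigma'_t)$, observe that only when the uniformly chosen site $I$ lands in $J_k$ can $D_k$ change, and that, given $I = v \in J_k$, the common uniform $U \in [0,1]$ updates the two spins at $v$ by comparing against $r_+(T)$ and $r_+(T')$ respectively. A direct case analysis on whether $v$ is currently an agreement or a disagreement site shows that, either way, a new disagreement is created (or an existing one preserved) with probability exactly $|r_+(T)-r_+(T')|$: the four sign cases for $(\sigma_t(v),\sigma'_t(v))\in\{\pm1\}^2$ all collapse to this value because the two update thresholds on the common $U$ differ by precisely $|r_+(T)-r_+(T')|$. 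Summing the per-site expected change over $v\in J_k$ gives
\[
    \mathbb E[D_k(t+1) - D_k(t) \mid \mathcal F_t] \;=\; -\frac{D_k(t)}{n} + p_k\, |r_+(T) - r_+(T')|.
\]

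Next, I would convert $|r_+(T)-r_+(T')|$ into a linear expression in $\mathbf D(t)$ using two routine bounds: from \eqref{r+}, $r_+$ is $(\beta/2)$-Lipschitz since $r_+'(x)=(\beta/2)\mathrm{sech}^2(\beta x)\le\beta/2$; and for each $j$, $|S^{(j)}(\sigma_t) - S^{(j)}(\sigma'_t)| \le 2D_j(t)/n$ because each disagreement site in $J_j$ contributes $2/n$ to the magnetization gap and agreement sites contribute $0$. Chaining these gives
\[
    |r_+(T)-r_+(T')| \;\le\; \frac{\beta}{2}\sum_{j\neq k} |S^{(j)}(\sigma_t)-S^{(j)}(\sigma'_t)| \;\le\; \frac{\beta}{n}\sum_{j\neq k} D_j(t),
\]
so $\mathbb E[D_k(t+1)\mid\mathcal F_t] \le (1-\tfrac{1}{n})D_k(t) + \tfrac{p_k\beta}{n}\sum_{j\neq k}D_j(t)$, which is precisely $(\mathbf A\mathbf D(t))_k$. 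Taking expectations yields the componentwise recursion $\mathbb E\mathbf D(t+1)\le \mathbf A\,\mathbb E\mathbf D(t)$, and iteration finishes the proof.

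Beyond bookkeeping, the only subtle point is the sign-agnostic identity that the coupling creates/preserves a disagreement at $v$ with probability $|r_+(T)-r_+(T')|$ regardless of $(\sigma_t(v),\sigma'_t(v))$; this is exactly what the monotone coupling (common $U$) buys us, and it is what makes the derivation work for arbitrary (not necessarily ordered) starting pairs. The Lipschitz and magnetization-gap estimates are routine.
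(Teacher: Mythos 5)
Your proof is correct, and it takes a genuinely different route from the paper's. The paper proceeds in the classical path-coupling style: it first handles the base case $\mathrm{dist}(\sigma,\sigma')=1$ with $\sigma\leq\sigma'$, exploiting the monotone structure to bound the probability of the "decoupling" event $B_i$ by $\tanh(\beta/n)\leq\beta/n$, and then reduces general pairs to chains of single-flip neighbors via the triangle inequality for $\mathrm{dist}_i$. You instead compute the one-step conditional drift \emph{directly} for an arbitrary pair, using the key observation that once a site $v\in J_k$ is selected, the post-update spins at $v$ disagree with probability exactly $|r_+(T)-r_+(T')|$ independently of the pre-update spins — an equality that the common-$U$ construction buys you — and then linearize via the global $(\beta/2)$-Lipschitz bound on $r_+$ together with $|S^{(j)}(\sigma_t)-S^{(j)}(\sigma'_t)|\leq 2D_j(t)/n$. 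This avoids both the single-flip reduction and the triangle inequality, and it makes the one-step drift structure transparent (you get the exact identity before ever invoking any inequality), which is arguably cleaner. The paper's route stays closer to the standard Bubley–Dyer template and yields the marginally sharper intermediate bound $\tanh(\beta/n)$, but both arrive at the same coefficient $b_k=p_k\beta/n$ and the same matrix $\mathbf A$. The final iteration step — propagating the componentwise recursion $\mathbb E\mathbf D(t+1)\leq\mathbf A\,\mathbb E\mathbf D(t)$ through the nonnegativity of $\mathbf A$ — is shared by both arguments.
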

\begin{proof}
Assume $d(\sigma,\sigma')=1$ with $-1=\sigma(v) = -\sigma'(v)$ for some vertex $v$.
Note $\sigma \leq \sigma'$.
Since we are considering a monotone coupling, it holds that for each $i=1,\dots,m$, \begin{align*}
\mathrm{dist}_i({\sigma^{(i)}_{1}},{\sigma'^{(i)}_{1}})= \mathbbm 1_{v\in J_i}(1-\mathbbm 1_{I=v}) + \mathbbm 1_{v \notin J_i}(\mathbbm 1_{I\in J_i} \mathbbm 1_{B_i})
\end{align*} where \[B_i=\Biggl\{r_+\biggl(\sum_{l\neq i}S_l(\sigma)\biggr)\leq U < r_+\biggl(\sum_{l\neq i}S_l(\sigma')\biggr)\Biggl\}.\]
Note that \begin{align*}
    \mathbb P(B_i) &=\frac{1}{2}\Biggl( \tanh\biggl(\beta \sum_{l\neq i}S_l(\sigma')\biggr) - \tanh\biggl(\beta \sum_{l\neq i}S_l(\sigma)\biggr)\Biggr)
    \\&=\frac{1}{2}\Biggl( \tanh\biggl(\beta \biggl(\sum_{l\neq i}S_l(\sigma)+\frac{2}{n}\biggr)\biggr) - \tanh\biggl(\beta \sum_{l\neq i}S_l(\sigma)\biggr)\Biggr) \mathbbm1_{v \notin J_i}
    \\ &\leq \tanh \frac{\beta}{n} \mathbbm1_{v \notin J_i}.
\end{align*} 
Since $I$ and $U$ are independent, for $i=1,\dots,m$, \begin{align*} \mathbb E\mathrm{dist}_i({\sigma^{(i)}_{1}},{\sigma'^{(i)}_{1}}) \leq \mathbbm 1_{v\in J_i}(1-\frac{1}{n})+\mathbbm 1_{v \notin J_i} p_i \tanh \frac{\beta}{n}.
\end{align*}

Suppose $\mathrm{dist}(\sigma,\sigma')=  k >1$.
There exists $\sigma^0 \colonequals \sigma$, $\sigma^{1}$, $\dots$, $\sigma^k \colonequals \sigma'$ such that $\mathrm{dist}(\sigma^i, \sigma^{i+1})=1$.
By the triangular inequality for $\mathrm{dist}_i$ and the fact $\tanh(\beta /n) \leq \beta/n$, \begin{align*}
\mathbb E\mathrm{dist}_i({\sigma^{(i)}_{1}},{\sigma'^{(i)}_{1}}) \leq (1-\frac{1}{n})\mathrm{dist}_i(\sigma^{(i)},\sigma'^{(i)})+ p_i \frac{\beta}{n} \sum_{ l\neq i }\mathrm{dist}_l(\sigma^{(l)},\sigma'^{(l)}).
\end{align*}

Furthermore, by the Markov property,
\begin{multline*}
    \mathbb E[\mathrm{dist}_i ({\sigma^{(i)}_{t+1}}, \sigma'^{(i)}_{t+1})| {\sigma_{t}}, {\sigma'_{t}} ] \leq (1-\frac{1}{n})\mathrm{dist}_i({\sigma^{(i)}_{t}}, {\sigma'^{(i)}_{t}}) + \frac{p_i \beta}{n} \sum_{l\neq i }\mathrm{dist}_l({\sigma^{(l)}_{t}},{\sigma'^{(l)}_{t}}).
\end{multline*}
By taking expectation and putting $x_{i,t} \colonequals \mathbb E\mathrm{dist}_i({\sigma^{(i)}_{t}}, {\sigma'^{(i)}_{t}})$, we have 
\begin{align*}
    \begin{pmatrix} x_{1,t} \\ \vdots \\ x_{m,t} \end{pmatrix} \leq \mathbf A \begin{pmatrix} x_{1,t-1} \\ \vdots \\ x_{m,t-1} \end{pmatrix}.
\end{align*} 
Iterating gives \[ \begin{pmatrix} x_{1,t} \\ \vdots \\ x_{m,t} \end{pmatrix} \leq \mathbf A^t \begin{pmatrix} \mathrm{dist}_1(\sigma^{(1)},\sigma'^{(1)}) \\ \vdots \\ \mathrm{dist}_m(\sigma^{(m)},\sigma'^{(m)}) \end{pmatrix}.\]
\end{proof}

From now on, $\mathbf A$ (which depends on the number of vertices $n$) always denotes the matrix defined in \thref{monotonecontraction}.
Note that $\mathbf A$ is a positive matrix, so by the Perron-Frobenius theorem, there exists the largest eigenvalue $g=g_n>0$ with the left eigenvector $\mathbf a^T\colonequals (a_1, \dots, a_m) >\mathbf 0$ normalized in $l^1$ norm. Note that $g$ has algebraic multiplicity $1$ (see \parencite[Section 8.2]{Meyer} for a proof), so $\mathbf a^T$ is unique.


We fix the following notations  \begin{gather} 
    \upsilon \colonequals n(1-g)  \text{ and }\label{upsilondef}
    \\ \beta_{cr} \colonequals \frac{1}{(m-1)\sum_{i=1}^m a_ip_i} \label{betacrdef}
\end{gather} 
where $g$ and $(a_1,\dots,a_m)$ are defined in the previous paragraph.
Another characterization of $\beta_{cr}$ is given in \thref{slowmixinglemma}.
Insuk Seo commented\footnote{personal communication} that it can also be characterized as the threshold value of $\beta$ that makes $\mathbf K$ positive definite where $\mathbf K$ is defined through the equation $\mathbf A=\mathbf I-\frac{1}{n}\mathbf K$, $\mathbf I$ being the $m$-by-$m$ identity matrix.
\thref{upsilon} connects the quantities $\upsilon$ and $\beta_{cr}$.

\begin{prop}\thlabel{upsilon}
    The left eigenvector $\mathbf a^T$ only depends on $p_1,\dots,p_m$.
    Moreover, $\upsilon$ only depends on $p_1,\dots,p_m,$ and $\beta$ through the following equation: \[\upsilon = 1-\beta(m-1)\sum_{i=1}^ma_ip_i.\]
    Therefore, $\beta_{cr}$ only depends on $p_1,\dots,p_m$, and we have $\upsilon = 1-\beta/\beta_{cr}$. 
\end{prop}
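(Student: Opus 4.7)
The plan is to deduce both assertions by direct algebraic manipulation of the left-eigenvector equation $\mathbf{a}^T\mathbf{A}=g\mathbf{a}^T$. Setting $T:=\sum_i a_ip_i$ and reading the equation off column by column, the $j$-th column becomes $a_j(1-1/n)+(T-a_jp_j)\beta/n=ga_j$. Multiplying through by $n$ and using $\upsilon=n(1-g)$ from \eqref{upsilondef}, this collapses to the scalar identity
\[
a_j(\upsilon-1)=(a_jp_j-T)\beta,\qquad j=1,\dots,m,
\]
which I will call the key identity. Both halves of the proposition will fall out of it.

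First, summing the key identity over $j$ and using the $l^1$-normalization $\sum_j a_j=1$, the left side becomes $\upsilon-1$ while the right side collapses to $\bigl(\sum_j a_jp_j - mT\bigr)\beta=(T-mT)\beta=-(m-1)T\beta$. This immediately yields $\upsilon=1-(m-1)\beta\sum_i a_ip_i$, which is the displayed formula of the proposition; combined with definition \eqref{betacrdef} it also gives $\upsilon=1-\beta/\beta_{cr}$.

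For the claim that $\mathbf{a}^T$ depends only on $p_1,\dots,p_m$, I substitute $\upsilon-1=-(m-1)T\beta$ back into the key identity and cancel the factor $\beta>0$, obtaining $a_j=T/((m-1)T+p_j)$. Each $a_j$ is now a function of $T$ and $p_j$ alone, with no residual dependence on $n$ or $\beta$. The normalization $\sum_j a_j=1$ then reduces to the single equation $\sum_{j=1}^m T/((m-1)T+p_j)=1$ in the unknown $T$, whose left-hand side is strictly increasing in $T$ on $(0,\infty)$ with limit strictly greater than $1$ as $T\to\infty$, and hence has a unique positive solution determined solely by $p_1,\dots,p_m$. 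This pins down $T$ and therefore $\mathbf{a}^T$, and $\beta_{cr}=1/((m-1)T)$ is then a function of the $p_j$'s only. The proposition is essentially an algebraic identity, so there is no real obstacle beyond careful bookkeeping; the only step that is not pure arithmetic is this uniqueness of $T$, which the monotonicity argument above handles cleanly.
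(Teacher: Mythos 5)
Your proof is correct, and it takes a genuinely different (and in some ways more transparent) route than the paper's. The paper argues in two stages: it scales the characteristic equation $\det(\mathbf{A}-gI)=0$ by $(n/\beta)^m$ to show that $(\upsilon-1)/\beta$ is a root of a polynomial whose coefficients involve only $p_1,\dots,p_m$, then observes that $\mathbf a$ lies in the kernel of the transposed scaled matrix (so it too depends only on the $p_i$'s), and finally obtains the formula for $\upsilon$ by computing $g=\|\mathbf A^T\mathbf a\|_1$ as a row-sum. You instead extract the componentwise identity $a_j(\upsilon-1)=\beta(a_jp_j-T)$ directly from $\mathbf a^T\mathbf A=g\mathbf a^T$ (equivalent to $(\beta p_j+1-\upsilon)a_j=\beta T$, the same relation the paper later uses in its Lemma 2.5), sum it to get the $\upsilon$ formula, and then back-substitute to produce the closed form $a_j=T/((m-1)T+p_j)$ together with a one-variable normalization equation for $T$. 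This buys you two things the paper's argument does not make explicit: a concrete formula for $\mathbf a$, and a self-contained uniqueness proof via strict monotonicity of $F(T)=\sum_j T/((m-1)T+p_j)$ (using $F(0)=0$ and $F(T)\to m/(m-1)>1$), rather than appealing to Perron--Frobenius uniqueness already established in the paper's surrounding discussion. The only caveat — shared with the paper — is that the cancellation of $\beta$ requires $\beta>0$, and the case $m=1$ is degenerate (there $(m-1)=0$ and $\beta_{cr}=\infty$ by convention), but neither is a gap since the proposition tacitly lives in the regime $m\geq 2$, $\beta>0$.
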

\begin{proof}
    Since $g$ satisfies \begin{align*}
    0&=(n/\beta)^m\det(\mathbf A-gI)=\det (n \mathbf A/\beta -n gI/\beta) \\&= \det \begin{pmatrix}\frac{\upsilon-1}{\beta}&p_1&p_1&\dots&p_1
    \\ p_2 & \frac{\upsilon-1}{\beta}&p_2 &\dots&p_2 
    \\p_3&p_3&\frac{\upsilon-1}{\beta}&\dots&p_3
    \\\vdots &\dots&&&\vdots
    \\p_m &\dots&&\dots&\frac{\upsilon-1}{\beta}
    \end{pmatrix},
    \end{align*}
    it holds that $(\upsilon-1)/\beta$ is a root of a polynomial with coefficients only depending on $p_1,\dots,p_m$.
    Since $\mathbf a$ is in the kernel of the transpose of the above matrix, it only depends on $p_1,\dots,p_m$.
    
    Finally, $g= \|\mathbf A^T \mathbf a\|_1= 1-1/n+\frac{\beta}{n}(m-1) \sum_{k}a_ip_i$ implies $\upsilon =1 -\beta (m-1)\sum_i a_ip_i$.
    
\end{proof}

We collect further properties of the matrix $\mathbf A$ and its left eigenvector $\mathbf a^T$ in the next two lemmas. 

\begin{lem}\thlabel{finalfinallemma}
We have \[a_1\geq \dots\geq a_m \ \text{and}\ \sum_{i=1}^ma_ip_i \leq \frac{1}{m}.\]
The equality in the latter holds if and only if $p_1=\dots=p_m$.
\end{lem}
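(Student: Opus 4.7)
The plan is to derive an explicit closed form for the entries $a_l$ of the left Perron eigenvector and then read off both claims directly.

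First I would unpack the eigenvalue equation $\mathbf a^T\mathbf A=g\mathbf a^T$ column by column. Writing $A_{kl}=a\,\mathbbm 1_{k=l}+(p_k\beta/n)\mathbbm 1_{k\neq l}$ and setting $C\colonequals\sum_k a_kp_k$, the $l$-th coordinate reads
\[
a\cdot a_l+\frac{\beta}{n}(C-a_lp_l)=g\,a_l.
\]
Combining this with $a=1-1/n$, $g=1-\upsilon/n$, and the identity $1-\upsilon=\beta(m-1)C$ from \thref{upsilon}, the coefficient on the left collapses and I expect to obtain the clean formula
\[
a_l=\frac{C}{(m-1)C+p_l},\qquad l=1,\dots,m.
\]

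The monotonicity claim is then immediate: the right-hand side is strictly decreasing in $p_l$, and since $p_1\leq\cdots\leq p_m$, we get $a_1\geq\cdots\geq a_m$. For the inequality $\sum_i a_ip_i\leq 1/m$, I plan to substitute the formula for $a_l$ back into the definition of $C$, which yields
\[
1=\sum_{l=1}^{m}\frac{p_l}{(m-1)C+p_l}.
\]
Now introduce $f(p)\colonequals p/((m-1)C+p)=1-(m-1)C/((m-1)C+p)$; a direct differentiation shows $f''(p)<0$, so $f$ is strictly concave on $(0,\infty)$. Applying Jensen's inequality to the uniform average over $p_1,\dots,p_m$, whose mean is $1/m$, and using $\sum_lf(p_l)=1$ gives
\[
\frac{1}{m}=\frac{1}{m}\sum_{l=1}^{m}f(p_l)\leq f\!\left(\frac{1}{m}\right)=\frac{1/m}{(m-1)C+1/m}.
\]
Rearranging yields $(m-1)C\leq(m-1)/m$, i.e.\ $C\leq 1/m$. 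Strict concavity of $f$ forces equality in Jensen's inequality only when $p_1=\cdots=p_m$, which gives the stated equality condition.

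The only real obstacle is the first step: one must correctly combine the eigenvalue equation with the two characterizations of $\upsilon$ from \thref{upsilon} to extract the explicit form of $a_l$. Once that formula is in hand, both the monotonicity and the bound follow from one application each of monotonicity of $f$ and Jensen's inequality, with no further estimates needed.
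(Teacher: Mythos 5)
Your proof is correct, and it takes a genuinely different route from the paper. The paper works directly from the pairwise relation $(\beta p_i + 1 - \upsilon)a_i = (\beta p_j + 1 - \upsilon)a_j$ (never solving for $a_l$ explicitly), deduces $a_1 \geq \cdots \geq a_m$, and then invokes Chebyshev's sum inequality on the oppositely ordered sequences $(a_i)$ and $(p_i)$ to get $\sum a_i p_i \leq \frac{1}{m}(\sum a_i)(\sum p_i) = \frac{1}{m}$, with a short extra argument needed to exclude the degenerate equality case $a_1 = \cdots = a_m$. You instead solve the eigenvector equation outright: combining $a_l(1-\upsilon+\beta p_l) = \beta C$ with the identity $1-\upsilon = \beta(m-1)C$ from Proposition~\ref{upsilon} gives the closed form $a_l = C/((m-1)C+p_l)$, from which monotonicity is immediate, and substituting back into $C = \sum_l a_l p_l$ (equivalently, the $\ell^1$ normalization $\sum_l a_l = 1$) yields the self-consistency identity $\sum_l p_l/((m-1)C+p_l) = 1$, to which Jensen applied to the strictly concave $f(p) = p/((m-1)C+p)$ gives $C \leq 1/m$. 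Your route is a little longer because of the explicit solve, but it buys a closed-form expression for the Perron eigenvector $\mathbf a$ that the paper never writes down, and the equality case falls out in one step from strict concavity rather than requiring the separate reduction the paper uses. Both approaches implicitly use $\beta > 0$ (so that $\mathbf A$ is a positive matrix and the Perron vector is well defined) and $m \geq 2$ (so that $(m-1)C > 0$), which is consistent with the paper's standing assumptions.
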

\begin{proof}
Recall that we assumed $p_1\leq \dots \leq p_m$.

We claim that $a_1\geq \dots\geq a_m$.
To that end, fix $i<j$.
From $\mathbf a^T\mathbf A=g\mathbf a^T$, we have $(1-\frac{1}{n})a_i +\frac{\beta}{n}\sum_{k\neq i}a_kp_k-ga_i=0=(1-\frac{1}{n})a_j +\frac{\beta}{n}\sum_{k\neq j}a_kp_k-ga_j$.
Then $(1-\frac{1}{n}-g-\frac{\beta p_i}{n})a_i=(1-\frac{1}{n}-g-\frac{\beta p_j}{n})a_j$, i.e., $(\beta p_i +1-\upsilon )a_i=(\beta p_j +1-\upsilon )a_j$.
Thus, $p_i\leq p_j$ implies $a_i\geq a_j$, proving the claim.

By Chebyshev's sum inequality, since $a_i\geq a_j$ and $p_i\leq p_j$ whenever $i<j$, \begin{align*}
    \sum_{i=1}^ma_ip_i \leq \frac{1}{m} \left(\sum_{i=1}^m a_i\right)\left(\sum_{i=1}^m p_i\right)=\frac{1}{m}.
\end{align*}
The equality holds if and only if $a_1= \dots =a_m$ or $p_1= \dots =p_m$.
The proof is now complete by noticing the fact that $(\beta p_i +1-\upsilon )a_i=(\beta p_j +1-\upsilon )a_j$ and $a_1= \dots =a_m=1/m$ imply $p_1= \dots =p_m$.
\end{proof}
\begin{rmk}
As a consequence, we obtain a lower bound $\beta_{cr}\geq m/(m-1)$.
\end{rmk}

\begin{lem}\thlabel{newlemma}
For $\mathbf 0 \leq \mathbf s \in \mathcal S$ and $\mathbf p \colonequals (p_1,\dots, p_m)^T$, we have \[\|\mathbf A^t\mathbf s\|_1 \leq  g^t \left(\sum_{i=1}^m \frac{(s^{(i)})^2}{p_i}\right)^{1/2},\quad \mathbf e_j^T \mathbf A^t \mathbf s \leq \sqrt{p_j} g^t  \left(\sum_{i=1}^m \frac{(s^{(i)})^2}{p_i}\right)^{1/2} . \]
In particular, it holds that \[\|\mathbf A^t \mathbf{p}\|_1 \leq g^t, \quad \mathbf e_j^T \mathbf A^t \mathbf p \leq \sqrt{p_j}g^t. \]
\end{lem}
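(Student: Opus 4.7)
The plan is to exploit a similarity transformation that symmetrizes $\mathbf A$. Set $\mathbf D \colonequals \mathrm{diag}(\sqrt{p_1},\dots,\sqrt{p_m})$ and define $\mathbf B \colonequals \mathbf D^{-1}\mathbf A\,\mathbf D$. A direct computation gives
\[
\mathbf B_{ij} = \Bigl(1-\tfrac{1}{n}\Bigr)\delta_{ij} + \tfrac{\beta}{n}\sqrt{p_ip_j}\,(1-\delta_{ij}),
\]
which is manifestly symmetric. Since $\mathbf A$ and $\mathbf B$ are similar they share the same spectrum; $\mathbf B$ has strictly positive entries, so by Perron--Frobenius its largest eigenvalue in absolute value is the Perron eigenvalue, which equals $g$. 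Symmetry then promotes this to the operator-norm bound $\|\mathbf B\|_{\mathrm{op}}\leq g$, whence $\|\mathbf B^t \mathbf x\|_2 \leq g^t\|\mathbf x\|_2$ for every $\mathbf x \in \mathbb R^m$ and $t\geq 0$.

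Next I would express the quantities of interest through $\mathbf B$. Writing $\mathbf A^t = \mathbf D\,\mathbf B^t\mathbf D^{-1}$ and letting $\mathbf u \colonequals \mathbf D^{-1}\mathbf s$ (so that $\|\mathbf u\|_2^2 = \sum_{i=1}^m (s^{(i)})^2/p_i$) and $\mathbf w \colonequals \mathbf D\,\mathbf 1$ (so that $\|\mathbf w\|_2^2 = \sum_i p_i =1$), I obtain
\[
\|\mathbf A^t\mathbf s\|_1 \;=\; \mathbf 1^T \mathbf A^t\mathbf s \;=\; \mathbf w^T \mathbf B^t \mathbf u,
\]
where the first equality uses $\mathbf A^t\mathbf s\geq \mathbf 0$ (valid because $\mathbf A\geq 0$ and $\mathbf s\geq\mathbf 0$). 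By Cauchy--Schwarz and the operator-norm bound on $\mathbf B^t$, this is at most $\|\mathbf w\|_2\,g^t\|\mathbf u\|_2 = g^t\bigl(\sum_i (s^{(i)})^2/p_i\bigr)^{1/2}$, which is the first asserted inequality.

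For the second inequality I repeat the trick with $\mathbf e_j$ in place of $\mathbf 1$:
\[
\mathbf e_j^T\mathbf A^t\mathbf s \;=\; \sqrt{p_j}\,\mathbf e_j^T \mathbf B^t\mathbf u \;\leq\; \sqrt{p_j}\,\|\mathbf e_j\|_2\,\|\mathbf B^t\mathbf u\|_2 \;\leq\; \sqrt{p_j}\,g^t\|\mathbf u\|_2,
\]
again using symmetry of $\mathbf B$ (so its operator norm governs the right-hand side) and $\|\mathbf e_j\|_2=1$. Finally, the ``in particular'' statements are immediate specializations: plugging $\mathbf s=\mathbf p$ gives $\sum_i p_i^2/p_i = \sum_i p_i = 1$, so the weighted $\ell^2$ norm collapses and both displayed bounds reduce to $g^t$ and $\sqrt{p_j}\,g^t$ respectively.

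I do not anticipate a genuine obstacle; the entire argument rests on spotting the diagonal conjugation that makes $\mathbf A$ symmetric, after which Cauchy--Schwarz together with Perron--Frobenius does all the work. The only minor point requiring care is justifying that $\mathbf B$'s operator norm equals $g$ rather than just being bounded by the Perron eigenvalue of $|\mathbf B|$---but since $\mathbf B$ itself is positive and symmetric, $\|\mathbf B\|_{\mathrm{op}}=\max_i|\lambda_i(\mathbf B)|=g$ directly.
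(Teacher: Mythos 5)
Your proof is correct. The heart of both arguments is the same: conjugate $\mathbf A$ by $\mathbf D=\mathrm{diag}(\sqrt{p_1},\dots,\sqrt{p_m})$ to get a symmetric matrix, invoke the spectral theorem together with Perron--Frobenius to identify its operator norm with $g$, and then translate the $\ell^1$ quantities of interest into $\ell^2$ quantities on the symmetric side. Where you differ from the paper's main proof is in the final step: the paper writes $\mathbbm 1^T\mathbf A^t\mathbf s=\mathrm{Tr}(\mathbf C^t\mathbf D^{-1}\mathbf s\mathbbm 1^T\mathbf D)$, observes that this rank-one positive matrix has trace equal to its spectral radius, and then bounds the spectral radius by the operator norm and finishes with submultiplicativity and $\|\mathbf x\mathbf y^T\|_2=\|\mathbf x\|_2\|\mathbf y\|_2$. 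You instead cut straight to Cauchy--Schwarz on the bilinear form $\mathbf w^T\mathbf B^t\mathbf u$ with $\mathbf w=\mathbf D\mathbbm 1$, $\mathbf u=\mathbf D^{-1}\mathbf s$, which avoids the detour through traces and rank-one matrices and handles the $\mathbf e_j^T\mathbf A^t\mathbf s$ bound by exactly the same mechanism with $\mathbf e_j$ in place of $\mathbbm 1$. In fact, the paper itself acknowledges your route in the remark immediately after its proof (``Another relatively simple proof...''), stated there only for the $\|\cdot\|_1$ bound; your write-up is the natural unification of that remark and the second inequality. Both approaches buy the same result, but the Cauchy--Schwarz version is shorter, makes the role of the weights $\sqrt{p_i}$ transparent, and requires no special pleading for the case $\mathbf s=\mathbf 0$ (the paper's trace argument needs $\mathbf s>\mathbf 0$ to invoke Perron--Frobenius on the rank-one matrix, and so handles $\mathbf s=\mathbf 0$ separately).
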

\begin{proof}
We want to find a symmetric matrix $\mathbf{C}$ which is similar to $\mathbf A$.
To that end, suppose that there exists an invertible diagonal matrix $\mathbf{D}=\mathrm{diag}(d_1, \dots, d_m)$ and a symmetric matrix $\mathbf{C}$ such that $\mathbf{C}=\mathbf{D}^{-1}\mathbf A\mathbf{D}$. 
Then $\mathbf{D}\mathbf A^T\mathbf{D}^{-1}=\mathbf{C}^T=\mathbf{C}=\mathbf{D}^{-1}\mathbf A\mathbf{D}$, so $\mathbf{D}^2\mathbf A^T=\mathbf A\mathbf{D}^2$, which leads to $d_i^2 p_j = p_i d_j^2$ for $i,j \in \{1,2,\dots, m\}$.
With the above in mind, let $\mathbf{D}\colonequals \mathrm{diag}(\sqrt{p_1},\dots ,\sqrt{p_m})$ and $\mathbf{C}\colonequals (c_{ij}) $ where $c_{ii}=1-1/n$ and $c_{ij}=\sqrt{p_ip_j}{\beta}/{n}$ for $i\neq j$.
Note that $\mathbf{C}$ is real-symmetric and $\mathbf{C}=\mathbf{D}^{-1}\mathbf A\mathbf{D}$. 
Then, by the spectral theorem for real symmetric matrices, $\|\mathbf{C}\|_2=g$.
Note that $\mathbf{C}$ and $\mathbf A$ have the same real eigenvalues since they are similar.

Observe that for $\mathbf x, \mathbf y \in \mathbb R^m$, $\|\mathbf x\mathbf y^T\|_2=\|\mathbf x\|_2\|\mathbf y\|_2$. 
This can be easily checked by the equalities \[\|\mathbf x \mathbf y^T\|_2=\sup_{\|\mathbf z\|_2=1} \|\mathbf x \mathbf y^T \mathbf z\|_2 =\sup_{\|\mathbf z\|_2=1} |\mathbf y^T \mathbf z |\|\mathbf x \|_2 = \|\mathbf y\|_2 \|\mathbf x\|_2 .\]

Let $\mathbbm 1 \colonequals (1,\dots,1)^T$.
The case $\mathbf s=\mathbf 0$ is trivial, so assume $\mathbf s>\mathbf 0$.
Since $\mathbf s \mathbbm 1^T$ has rank 1, $\mathbf{C}^t\mathbf{D}^{-1}\mathbf s\mathbbm 1^T \mathbf{D}$ has rank $1$ .
Also, its elements are positive, so it has a positive eigenvalue by the Perron-Frobenius theorem.
Thus, $\mathrm{Tr}(\mathbf{C}^t\mathbf{D}^{-1}\mathbf s\mathbbm 1^T \mathbf{D})$ is equal to its spectral radius, from which the following inequality follows:
\begin{align*}
    \|\mathbf A^t \mathbf s\|_1 &= \mathbbm 1^T \mathbf A^t \mathbf s = \mathbbm 1^T \mathbf{D}\cdot \mathbf{C}^t\mathbf{D}^{-1} \mathbf s  =\mathrm{Tr}(\mathbf{C}^t\mathbf{D}^{-1}\mathbf s\mathbbm 1^T \mathbf{D}) \leq \|\mathbf{C}^t\mathbf{D}^{-1}\mathbf s\mathbbm 1^T \mathbf{D} \|_2 
    \\&\leq \|\mathbf{C}\|_2^t \|\mathbf{D}^{-1}\mathbf s \mathbbm 1^T \mathbf{D}\|_2 = g^t \|\mathbf{D}^{-1}\mathbf s\|_2 \|\mathbf{D} \mathbbm 1 \|_2 = g^t \left(\sum_{i=1}^m \frac{(s^{(i)})^2}{p_i}\right)^{1/2} .
\end{align*}
Similarly, \[\mathbf e_j^T \mathbf A^t \mathbf s \leq \|\mathbf{C}\|_2^t \|\mathbf{D}^{-1}\mathbf s \mathbf e_j^T \mathbf{D}\|_2 =g^t\|\mathbf{D}^{-1}\mathbf s\|_2\|\mathbf{D}\mathbf e_j\|_2= \sqrt{p_j}g^t\Biggl(\sum_{i=1}^m \frac{(s^{(i)})^2}{p_i}\Biggr)^{1/2}.\]
\end{proof}
\begin{rmk}
Another relatively simple proof of $\|\mathbf A^t\mathbf s\|_1 \leq  g^t \left(\sum_{i=1}^m \frac{(s^{(i)})^2}{p_i}\right)^{1/2}$ can be given as follows. 
By the Cauchy-Schwartz inequality, we have $\sqrt{\sum_i s_i^2 /p_i}\geq \sum_i s_i$.
Then $\|\mathbf A^t \mathbf s\|_1\leq \|\mathbf D^{-1}\mathbf A^t \mathbf s\|_2=\|\mathbf D^{-1}\mathbf A^t \mathbf D \mathbf D^{-1} \mathbf s\|_2=\|\mathbf C^t \mathbf D^{-1}\mathbf s\|_2\leq g^t \|\mathbf D^{-1} \mathbf s\|_2$.
\end{rmk}

From now on, for brevity, we use the notation \[\mathbf p \colonequals (p_1,\dots,p_m)^T.\]

\begin{lem}\thlabel{lemma}
For a monotone coupling $({\sigma_{t}},{\sigma'_{t}})_{t\geq 0}$ starting at $(\sigma,\sigma')$, we have
\[\mathbb E \sum_{i=1}^m a_i \mathrm{dist}_i (\sigma_t,\sigma'_t) \leq  g^t \sum_{i=1}^m a_i \mathrm{dist}_i (\sigma,\sigma').  \]
Moreover, for $i=1,\dots,m$, \[\mathbb E \mathrm{dist}_i ({\sigma^{(i)}_t}, {\sigma'^{(i)}_t}) \leq n\sqrt{p_i}g^t.\]
\end{lem}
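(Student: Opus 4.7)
The two inequalities both follow by combining \thref{monotonecontraction} with the spectral/matrix-norm input from the earlier lemmas; there is essentially no new probability needed.

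For the first inequality, the plan is to left-multiply the vector inequality in \thref{monotonecontraction} by the row vector $\mathbf a^T$. Because $\mathbf a>\mathbf 0$ componentwise and the matrix $\mathbf A$ has nonnegative entries, the componentwise inequality is preserved under this operation, yielding
\[
\sum_{i=1}^m a_i\,\mathbb E\,\mathrm{dist}_i(\sigma_t^{(i)},\sigma'^{(i)}_t)\;\leq\;\mathbf a^T \mathbf A^t\begin{pmatrix}\mathrm{dist}_1(\sigma^{(1)},\sigma'^{(1)})\\\vdots\\\mathrm{dist}_m(\sigma^{(m)},\sigma'^{(m)})\end{pmatrix}.
\]
Since $\mathbf a^T$ is the left eigenvector of $\mathbf A$ with eigenvalue $g$, we have $\mathbf a^T \mathbf A^t = g^t \mathbf a^T$, and the desired bound drops out immediately.

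For the second inequality, the idea is to use the worst-case coordinate bound $\mathrm{dist}_i(\sigma^{(i)},\sigma'^{(i)})\leq |J_i|=np_i$, so that the initial distance vector is at most $n\mathbf p$ componentwise. Applying \thref{monotonecontraction} again (and using that the entries of $\mathbf A^t$ are nonnegative to preserve inequalities) gives
\[
\mathbb E\,\mathrm{dist}_i(\sigma_t^{(i)},\sigma'^{(i)}_t)\;\leq\;\mathbf e_i^T \mathbf A^t (n\mathbf p)\;=\;n\,\mathbf e_i^T \mathbf A^t \mathbf p.
\]
The already-proved inequality $\mathbf e_i^T \mathbf A^t \mathbf p\leq \sqrt{p_i}g^t$ from \thref{newlemma} then finishes the job.

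The only conceptual subtlety is confirming that both manipulations indeed preserve the componentwise inequality: this hinges on the positivity of $\mathbf a$ and the nonnegativity of the entries of $\mathbf A^t$, both of which are automatic since $\mathbf A$ is a positive matrix (for all $n$ large enough that $b_k = p_k\beta/n < 1$). I don't expect a genuine obstacle here; this lemma is really just a packaging of \thref{monotonecontraction} together with \thref{newlemma} into a form convenient for later use.
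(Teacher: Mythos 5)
Your proposal is correct and matches the paper's own proof: for the first inequality you left-multiply the vector bound from \thref{monotonecontraction} by $\mathbf a^T$ and use the left-eigenvector relation $\mathbf a^T\mathbf A^t=g^t\mathbf a^T$, and for the second you bound the initial distance vector by $n\mathbf p$ and invoke $\mathbf e_i^T\mathbf A^t\mathbf p\leq\sqrt{p_i}\,g^t$ from \thref{newlemma}. (The parenthetical about $b_k<1$ is unnecessary; positivity of $\mathbf A$ only needs $p_k,\beta>0$ and $n>1$.)
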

\begin{proof}
From \thref{monotonecontraction}, \begin{align*}
\mathbb E \sum_{i=1}^m a_i \mathrm{dist}_i (\sigma_t,\sigma'_t)  &= \mathbf a^T \begin{pmatrix}\mathbb E\mathrm{dist}_1({\sigma^{(1)}_t},{\sigma'^{(1)}_t})\\ \vdots \\  \mathbb E\mathrm{dist}_m({\sigma^{(m)}_{t}},{\sigma'^{(m)}_{t}}) \end{pmatrix} \leq  \mathbf a^T \mathbf A^t \begin{pmatrix} \mathrm{dist}_1(\sigma^{(1)}, \sigma'^{(1)}) \\ \vdots \\ \mathrm{dist}_m(\sigma^{(m)},\sigma'^{(m)}) \end{pmatrix} 
\\ &\leq g^t \mathbf a^T \begin{pmatrix} \mathrm{dist}_1(\sigma^{(1)}, \sigma'^{(1)}) \\ \vdots \\ \mathrm{dist}_m(\sigma^{(m)},\sigma'^{(m)}) \end{pmatrix} \leq g^t \sum_{i=1}^m a_i \mathrm{dist}_i (\sigma,\sigma').
\end{align*}

Notice that $\mathrm{dist}_k({\sigma^{(k)}_{t}},{\sigma'^{(k)}_{t}}) \leq np_k$ for each $k$, so \thref{newlemma} implies \[
    \mathbb E \mathrm{dist}_i ({\sigma^{(i)}_t}, {\sigma'^{(i)}_t}) \leq n\mathbf e_i^T \mathbf A^t \mathbf p \leq n\sqrt{p_i}g^t.
\]
\end{proof}

We would like to translate \thref{lemma} to the case of magnetization chains, which is done in \thref{normcontraction}.

\begin{lem}\thlabel{generalcontraction}
For starting magnetizations $\mathbf s = (s^{(1)},\dots,s^{(m)})\geq(s'^{(1)},\dots, s'^{(m)})= \mathbf s'$, the magnetization chains satisfy
\[\mathbf 0  \leq \begin{pmatrix}\mathbb E_{\mathbf s}{S^{(1)}_{t}} -\mathbb E_{\mathbf s'}{S'^{(1)}_{t}}\\ \vdots \\  \mathbb E_{\mathbf s}{S^{(m)}_{t}} -\mathbb E_{\mathbf s'}{S'^{(m)}_{t}} \end{pmatrix} \leq \mathbf A^t \begin{pmatrix} s^{(1)}-s'^{(1)} \\ \vdots \\ s^{(m)}-s'^{(m)} \end{pmatrix}.\]
\begin{rmk}
We say such pairs of starting magnetizations are \emph{monotone pairs}.
\end{rmk}
\end{lem}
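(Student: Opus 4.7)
The plan is to reduce the magnetization statement to the Hamming-distance contraction already proved in \thref{monotonecontraction} by exploiting the fact that, under a monotone coupling of two ordered configurations, each coordinate of the magnetization difference is (up to the factor $2/n$) exactly the Hamming distance in the corresponding block.

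First, I would pick representative configurations $\sigma, \sigma' \in \Omega$ with $\mathbf S(\sigma) = \mathbf s$, $\mathbf S(\sigma') = \mathbf s'$, and $\sigma \geq \sigma'$. This is possible because $\mathbf s \geq \mathbf s'$ entails $\#\{v \in J_i : \sigma'(v)=+1\} \leq \#\{v \in J_i : \sigma(v)=+1\}$ for each $i$, so we can simply arrange the $(+1)$-sites of $\sigma'^{(i)}$ to be a subset of those of $\sigma^{(i)}$ within each block $J_i$. Run the monotone coupling $(\sigma_t,\sigma'_t)_{t\geq 0}$ from $(\sigma, \sigma')$; then $\sigma_t \geq \sigma'_t$ for all $t$, which in each block $J_i$ forces $\sigma_t(v) - \sigma'_t(v) \in \{0, 2\}$, and hence
\[
S_t^{(i)} - S_t'^{(i)} \;=\; \frac{1}{n}\sum_{v \in J_i}\bigl(\sigma_t(v) - \sigma'_t(v)\bigr) \;=\; \frac{2}{n}\,\mathrm{dist}_i(\sigma_t^{(i)}, \sigma_t'^{(i)}) \;\geq\; 0,
\]
which already gives the lower bound $\mathbf 0$ in the claim (after taking expectations and invoking that $(\mathbf S_t)$ is a Markov chain by \thref{magmarkov}, so the law of $\mathbf S_t$ under $\mathbb P_\sigma$ depends only on $\mathbf S(\sigma) = \mathbf s$).

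Next, taking expectations in the display above and applying \thref{monotonecontraction} to the vector of per-block Hamming distances yields
\[
\begin{pmatrix}\mathbb E[S_t^{(1)} - S_t'^{(1)}] \\ \vdots \\ \mathbb E[S_t^{(m)} - S_t'^{(m)}]\end{pmatrix}
\;=\; \frac{2}{n}\begin{pmatrix}\mathbb E\,\mathrm{dist}_1(\sigma_t^{(1)}, \sigma_t'^{(1)}) \\ \vdots \\ \mathbb E\,\mathrm{dist}_m(\sigma_t^{(m)}, \sigma_t'^{(m)})\end{pmatrix}
\;\leq\; \frac{2}{n}\,\mathbf A^t \begin{pmatrix}\mathrm{dist}_1(\sigma^{(1)}, \sigma'^{(1)}) \\ \vdots \\ \mathrm{dist}_m(\sigma^{(m)}, \sigma'^{(m)})\end{pmatrix}.
\]
Since at time $0$ the same block-by-block identity gives $\tfrac{2}{n}\,\mathrm{dist}_i(\sigma^{(i)}, \sigma'^{(i)}) = s^{(i)} - s'^{(i)}$, pulling the factor $2/n$ inside $\mathbf A^t$ converts the right-hand side into $\mathbf A^t(\mathbf s - \mathbf s')$, finishing the upper bound. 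Finally, using \thref{magmarkov} one more time replaces $\mathbb E_\sigma$ and $\mathbb E_{\sigma'}$ with $\mathbb E_{\mathbf s}$ and $\mathbb E_{\mathbf s'}$, respectively.

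I do not anticipate a genuine obstacle here; the argument is essentially a translation lemma between two metrics on ordered pairs. The only point that needs mild care is confirming that for every $\mathbf s \geq \mathbf s'$ in $\mathcal S$ one can lift to configurations $\sigma \geq \sigma'$ preserving magnetizations in each block, and that the Markov property of $(\mathbf S_t)$ lets us identify $\mathbb E_\sigma S_t^{(i)}$ with $\mathbb E_{\mathbf s} S_t^{(i)}$ independently of the lift chosen.
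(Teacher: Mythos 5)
Your proof is correct and follows essentially the same route as the paper: lift the ordered magnetizations to ordered configurations, run the monotone coupling so each block's magnetization gap is $\tfrac{2}{n}$ times the block Hamming distance, apply \thref{monotonecontraction}, and invoke \thref{magmarkov} to pass from $\mathbb E_\sigma$ to $\mathbb E_{\mathbf s}$. The only difference is that you spell out the lifting step (matching the $+1$-sites of $\sigma'$ into those of $\sigma$) which the paper treats as immediate.
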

\begin{proof}
Let $({\sigma_{t}},{\sigma'_{t}})$ be a monotone coupling starting from $(\sigma,\sigma')$ where $\sigma \geq \sigma'$ and $S^{(i)}(\sigma)=s_i$, $S'^{(i)}(\sigma')=s_i'$ for $i=1,\dots,m$.
Such a monotone coupling exists because of the given condition $s_i \geq s_i'$ for each $i$.
Since $\sigma_i \geq \sigma_i'$, we have $s_i-s_i' =\frac{2}{n}\mathrm{dist}_i(\sigma_i,\sigma_i')$ for each $i$.
By monotonicity, ${\sigma^{(i)}_t} \geq {\sigma'^{(i)}_t}$ for each $i$.
Thus, ${S^{(i)}_{t}}-{S'^{(i)}_{t}}=|{S^{(i)}_{t}}-{S'^{(i)}_{t}}|=\frac{2}{n}\mathrm{dist}_i({\sigma^{(i)}_t},{\sigma'^{(i)}_t})\geq0$ for each $i$.
Then, by \thref{monotonecontraction}, \[\mathbf 0  \leq \begin{pmatrix}\mathbb E_{\sigma}{S^{(1)}_{t}} -\mathbb E_{\sigma'}{S'^{(1)}_{t}}\\ \vdots \\  \mathbb E_{\sigma}{S^{(m)}_{t}} -\mathbb E_{\sigma'}{S'^{(m)}_{t}} \end{pmatrix} = 
\begin{pmatrix}\mathbb E_{\sigma,\sigma'}|{S^{(1)}_{t}}-{S'^{(1)}_{t}}|\\ \vdots \\  \mathbb E_{\sigma,\sigma'}|{S^{(m)}_{t}}-{S'^{(m)}_{t}}| \end{pmatrix} \leq \mathbf A^t \begin{pmatrix} s^{(1)}-s'^{(1)} \\ \vdots \\ s^{(m)}-s'^{(m)} \end{pmatrix}.\]
Now, we can complete the proof since we have $\mathbb E_{\sigma}{S^{(i)}_{t}}-\mathbb E_{\sigma'}{S'^{(i)}_{t}}= \mathbb E_{\mathbf s}{S^{(i)}_{t}} -\mathbb E_{\mathbf s'}{S'^{(i)}_{t}}$ for each $i$ by \thref{magmarkov}.
\end{proof}

Recall that $\circ$ denotes a Hadamard product.
\begin{prop}\thlabel{normcontraction}
For a monotone coupling $(\sigma_t,\sigma_t')_{t\geq 0}$ starting at $(\sigma,\sigma')$ with magnetizations $(\mathbf s, \mathbf s')$, we have \begin{align*}
\mathbb E_{\sigma, \sigma'}\|\mathbf a \circ \mathbf S_t - \mathbf a \circ \mathbf S_t'\|_1
 \leq g ^t \|\mathbf a \circ \mathbf s - \mathbf a \circ \mathbf s'\|_1
  .\end{align*}
 Moreover, not depending on the coupling, we have
 \begin{align*}
 \|\mathbb E_{\mathbf s}\mathbf a \circ \mathbf S_t - \mathbb E_{\mathbf s'}\mathbf a \circ \mathbf S_t'\|_1
 \leq g ^t \|\mathbf a \circ \mathbf s - \mathbf a \circ \mathbf s'\|_1.
 \end{align*}
\end{prop}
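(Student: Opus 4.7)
The plan is to derive both inequalities from the Hamming-distance contraction (\thref{lemma}) and its magnetization analog (\thref{generalcontraction}), collapsing the matrix iterates via the left-eigenvector identity $\mathbf a^T \mathbf A^t = g^t \mathbf a^T$. For the first inequality I will assume, as appears to be implicit in the statement, that $(\sigma,\sigma')$ is a monotone pair — say $\sigma \leq \sigma'$, so that $\mathbf s \leq \mathbf s'$ componentwise — since otherwise two configurations with identical magnetizations could force the right-hand side to vanish while the left remains positive.

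Under the monotone hypothesis, the monotone coupling preserves the ordering ($\sigma_t \leq \sigma_t'$ for every $t$), so magnetization gaps coincide exactly with Hamming gaps: $|S_t^{(i)} - S_t'^{(i)}| = (2/n)\,\mathrm{dist}_i(\sigma_t^{(i)}, \sigma_t'^{(i)})$ pointwise. Multiplying by $a_i$, summing over $i$, taking expectations, and invoking \thref{lemma} yields
\[\mathbb E \|\mathbf a \circ \mathbf S_t - \mathbf a \circ \mathbf S_t'\|_1 = \frac{2}{n}\sum_i a_i\,\mathbb E\mathrm{dist}_i(\sigma_t^{(i)},\sigma_t'^{(i)}) \leq \frac{2}{n}\,g^t \sum_i a_i\,\mathrm{dist}_i(\sigma^{(i)},\sigma'^{(i)}),\]
and the right-hand side equals $g^t\|\mathbf a \circ \mathbf s - \mathbf a \circ \mathbf s'\|_1$ by the same Hamming identity applied at time $0$.

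For the distribution-level bound (valid for arbitrary $\mathbf s,\mathbf s' \in \mathcal S$) I would insert the coordinatewise maximum $\mathbf s^\vee \colonequals \mathbf s \vee \mathbf s' \in \mathcal S$. Since both $(\mathbf s,\mathbf s^\vee)$ and $(\mathbf s',\mathbf s^\vee)$ are monotone pairs, \thref{generalcontraction} gives $0 \leq \mathbb E_{\mathbf s^\vee} S_t^{(i)} - \mathbb E_{\mathbf s} S_t^{(i)} \leq (\mathbf A^t(\mathbf s^\vee - \mathbf s))_i$ and analogously with $\mathbf s'$. The triangle inequality on real numbers, combined with the elementary componentwise identity $(s^\vee_i - s_i) + (s^\vee_i - s'_i) = |s_i - s'_i|$, bounds $\sum_i a_i|\mathbb E_{\mathbf s} S_t^{(i)} - \mathbb E_{\mathbf s'} S_t^{(i)}|$ by $\mathbf a^T\mathbf A^t|\mathbf s - \mathbf s'|$, which collapses to $g^t\|\mathbf a \circ \mathbf s - \mathbf a \circ \mathbf s'\|_1$. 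The one delicate point is that the $\vee$-trick must be carried out on the magnetization vectors and not on the configurations: $S^{(i)}(\sigma \vee \sigma')$ can strictly exceed $\max(S^{(i)}(\sigma), S^{(i)}(\sigma'))$, which would spoil the sharp identity that makes the final $g^t$ factor appear.
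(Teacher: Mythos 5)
Your decomposition differs from the paper's. You split via a single lattice join $\mathbf s^\vee := \mathbf s \vee \mathbf s'$ and the componentwise identity $2\mathbf s^\vee - \mathbf s - \mathbf s' = |\mathbf s - \mathbf s'|$; the paper instead interpolates coordinate by coordinate through $m-1$ intermediate magnetizations $\mathbf s_{(j)}$ satisfying $\mathbf s_{(j-1)} - \mathbf s_{(j)} = \mathbf e_j(s^{(j)} - s'^{(j)})$, telescopes in $\ell^1$, and collapses each term with $\mathbf a^T\mathbf A^t = g^t\mathbf a^T$. Both decompositions yield the distribution-level bound with the same $g^t$ factor, and the $\vee$-route is arguably tidier since it needs one auxiliary magnetization rather than $m-1$. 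As you observe, the join cannot be taken at the configuration level without inflating the magnetization, so your argument naturally produces only the mean-level inequality; for the coupled inequality you retreat to the ordered hypothesis $\sigma \leq \sigma'$.

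That retreat is justified, and your counterexample is real: if $\mathbf s = \mathbf s'$ but $\sigma \neq \sigma'$, pick a site where the two configurations disagree; both chains attempt the identical update (the mean fields coincide), so exactly one magnetization moves and $\mathbb E\|\mathbf a\circ(\mathbf S_1 - \mathbf S_1')\|_1 > 0$ while the right side vanishes. Be aware that the same concern touches the paper's telescope: to upgrade $\mathbb E\|\mathbf a\circ(\mathbf S_{(j-1),t} - \mathbf S_{(j),t})\|_1$ to $\mathbf a^T\mathbf A^t\mathbf e_j\,|s^{(j)}-s'^{(j)}|$ one needs the coupled chains $(\sigma_{(j-1),t},\sigma_{(j),t})$ to stay ordered, hence the intermediate starting configurations to be pairwise ordered; propagating this from $\sigma_{(0)}=\sigma$ to $\sigma_{(m)}=\sigma'$ forces $\sigma^{(j)}$ and $\sigma'^{(j)}$ to be comparable within each block, which need not hold. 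The downstream use of the coupled bound (the estimate $\mathbb E Y_{tot,t_n} \lesssim \sqrt n$ in the magnetization-coupling lemma) survives regardless, by inserting the all-plus configuration, which dominates every $\sigma$, as a common intermediate, at the cost of a harmless constant.
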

\begin{proof}
For any magnetizations $\mathbf s \equiv \mathbf s_{(0)}$ and $\mathbf s' \equiv \mathbf s_{(m)}$, there exists $\mathbf s _{(1)} ,\dots, \mathbf s_{(m-1)} \in \mathcal S \subset \mathbb R^m$ such that  $\mathbf s_{(i-1)} - \mathbf s_{(i)} = \mathbf e_i (s^{(i)}-s'^{(i)})$ for $i=1,\dots, m$.
In particular, $\mathbf s_{(i-1)}$ and $\mathbf s_{(i)}$ are a monotone pair for each $i$.
Then we can consider a monotone coupling $(\sigma_{(0),t},\dots,\sigma_{(m),t})_{t\geq 0}$ with starting states $(\sigma_{(0)},\dots ,\sigma_{(m)})$ such that $\sigma_t=\sigma_{(0),t}$, $\sigma_t'=\sigma_{(m),t}$ for $t\geq 0$, and the magnetization of the starting configuration $\sigma_{(i)}$ is $\mathbf s_{(i)}$ for $i=0,\dots,m$.

Let $\mathbf S_{(j),t}$ be the magnetization chain corresponding to $\sigma_{(j),t}$ for $j=0,\dots,m$.
By telescoping, \thref{generalcontraction} gives \begin{align*}
    &\mathbb E_{\sigma,\sigma'} \|\mathbf a \circ \mathbf S_{t} - \mathbf a \circ \mathbf S_{t}'\|_1  \leq \sum_{j=1}^m \mathbb E_{\sigma_{(j-1)},\sigma_{(j)}} \| \mathbf a \circ \mathbf S_{(j-1),t} - \mathbf a \circ \mathbf S_{(j),t}\|_1
    \\&\leq \sum_{j=1}^m \mathbf a^T \mathbf A^t \mathbf {e}_j |s^{(j)}-s'^{(j)} | = g^t \sum_{j=1}^m a_j |s^{(j)}-s'^{(j)} | = g ^t \|\mathbf a \circ \mathbf s - \mathbf a \circ \mathbf s'\|_1. 
\end{align*}
Then, the triangle inequality and \thref{magmarkov} imply \[\|\mathbb E_{\mathbf s}\mathbf a \circ \mathbf S_t - \mathbb E_{\mathbf s'}\mathbf a \circ \mathbf S_t'\|_1
 \leq g ^t \|\mathbf a \circ \mathbf s - \mathbf a \circ \mathbf s'\|_1.\]
\end{proof}

\section{Variance bound of the magnetization in high temperatures} \label{section3}
The next lemma is a generalization of Lemma 2.6 in \cite{levin} to Markov chains with a finite state space in $\mathbb R^m$.
Observe that for square-integrable $\mathbb R^m$-valued i.i.d. random vectors $X,Y$, we have $\mathbb{V}\mathrm{ar}X=\frac{1}{2}\mathbb E\|X-Y\|_2^2$.
\begin{lem}\thlabel{variancebound}
Let $(\mathbf Z_t)_{t\geq 0}$ be a Markov chain in a finite state space $\tilde {\mathcal S} \subseteq \mathbb R^m$. 
Suppose that there exists $0<r<1$ such that for any $\theta, \theta' \in \tilde {\mathcal S}$, \[\|\mathbb E_\theta \mathbf Z_t - \mathbb E_{\theta'}\mathbf Z_t'\|_1 \leq r^t \|\theta-\theta'\|_1.\] 
Then, for the $l^2$ norm variance, \[\sup_{\theta \in \mathcal S} \mathbb{V}\mathrm{ar}_{\theta} \mathbf Z_t \leq m\sup_{\theta \in \mathcal S} \mathbb{V}\mathrm{ar}_{\theta}\mathbf Z_1 \: \min \{t, (1-r ^2)^{-1}\}.\]
\end{lem}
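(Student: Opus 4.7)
The plan is to prove the bound via a Doob martingale-difference decomposition, which keeps the factor $m$ arising from the $l^1$ versus $l^2$ norm mismatch from compounding over the $t$ steps.

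First, I would write the decomposition $\mathbf Z_t - \mathbb E_\theta \mathbf Z_t = \sum_{k=1}^t D_k$ with $D_k := \mathbb E[\mathbf Z_t \mid \mathcal F_k] - \mathbb E[\mathbf Z_t \mid \mathcal F_{k-1}]$ and $\mathcal F_k := \sigma(\mathbf Z_0, \ldots, \mathbf Z_k)$. Coordinate-wise orthogonality of martingale differences, summed over the $m$ coordinates, yields
\begin{equation*}
\mathbb{V}\mathrm{ar}_\theta \mathbf Z_t = \sum_{k=1}^t \mathbb E_\theta \|D_k\|_2^2.
\end{equation*}
The Markov property rewrites $D_k = f_k(\mathbf Z_k) - \mathbb E[f_k(\mathbf Z_k) \mid \mathbf Z_{k-1}]$ for $f_k(z) := \mathbb E_z \mathbf Z_{t-k}$, and the hypothesis tells me that $f_k$ is $r^{t-k}$-Lipschitz with respect to the $l^1$ norm on $\tilde{\mathcal S}$.

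Next, I would apply the identity $\mathbb{V}\mathrm{ar}\,X = \tfrac{1}{2}\mathbb E\|X-X'\|_2^2$ for i.i.d.\ copies $X, X'$ (conditionally on $\mathbf Z_{k-1}$, taking $X = f_k(\mathbf Z_k)$), together with the standard chain $\|\cdot\|_2 \leq \|\cdot\|_1 \leq \sqrt m\,\|\cdot\|_2$ on $\mathbb R^m$, to obtain
\begin{equation*}
\mathbb E[\|D_k\|_2^2 \mid \mathbf Z_{k-1}] \leq r^{2(t-k)}\, m \, \mathbb{V}\mathrm{ar}(\mathbf Z_k \mid \mathbf Z_{k-1}) \leq r^{2(t-k)}\, m \sup_{\theta'} \mathbb{V}\mathrm{ar}_{\theta'} \mathbf Z_1.
\end{equation*}
Summing the geometric series $\sum_{k=1}^t r^{2(t-k)} \leq \min\{t,(1-r^2)^{-1}\}$ then yields the claimed bound.

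The main obstacle is precisely the $l^1$--$l^2$ mismatch. A naive one-step recursion via the law of total variance would produce $V_t \leq v_1 + mr^2\, V_{t-1}$ (where $V_t := \sup_\theta \mathbb{V}\mathrm{ar}_\theta \mathbf Z_t$), which requires $mr^2 < 1$ for stability and fails to reproduce the stated constant $(1-r^2)^{-1}$. The martingale decomposition circumvents this because each $\mathbb E\|D_k\|_2^2$ is dominated by a \emph{single} one-step variance, so the factor $m$ appears only once overall while the geometric sum in $r^{2(t-k)}$ still converges at rate $(1-r^2)^{-1}$.
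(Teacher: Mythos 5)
Your proof is correct. It differs from the paper's in the choice of decomposition, though the two are closely related in spirit. The paper conditions only on the first step $\mathbf Z_1$ and applies the $l^2$-valued law of total variance once, producing the one-step recursion $v_t \leq v_{t-1} + m v_1 r^{2(t-1)}$ (with $v_t$ denoting $\sup_\theta \mathbb{V}\mathrm{ar}_\theta \mathbf Z_t$), which it then iterates down to $v_0 = 0$. Your Doob martingale-difference decomposition unpacks the same structure in one shot: writing $\mathbf Z_t - \mathbb E_\theta\mathbf Z_t = \sum_{k=1}^t D_k$ and using coordinate-wise orthogonality replaces the recursion with a single sum $\sum_{k}\mathbb E_\theta\|D_k\|_2^2$, each summand controlled exactly as the paper controls its ``variance of conditional expectation'' term. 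Both routes hinge on the same two ingredients --- the $l^1$/$l^2$ norm comparison contributing a single factor of $m$, and the contraction hypothesis contributing the geometric decay $r^{2(t-k)}$ --- and they yield the identical bound $m v_1 \sum_{j=0}^{t-1} r^{2j}$. Your presentation avoids taking a supremum over the starting state at every step of an induction, which makes it slightly more self-contained; the paper's recursion is perhaps more elementary to state but reduces to the same sum after unwinding. Your diagnostic remark about why conditioning on the \emph{last} step would fail (producing a recursion with the unstable coefficient $m r^2$) is a good independent observation, and it echoes the paper's own cue that the idea is to condition on the first step.
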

\begin{proof}
Put $v_t \colonequals \sup_{\theta \in \mathcal S} {\mathbb{V}\mathrm{ar}}_{\theta}\mathbf Z_t$.
Let $(\mathbf Z_t)$ and $(\mathbf Z_t')$ be independent copies of the chain starting from $\theta \in \tilde{\mathcal S}$.
The idea is to condition on the first step.
Note that $\|\mathbf x\|_2 \leq \|\mathbf x\|_1 \leq  \sqrt{m}\|\mathbf x\|_2$ for $\mathbf x \in \mathbb R^m$.
Then by the observation right before the statement of this lemma, \[ \frac{1}{2}\mathbb E_{\theta}\|\mathbf Z_1-\mathbf Z_1'\|_1^2 \leq m\frac{1}{2}\mathbb E_{\theta}\|\mathbf Z_1-\mathbf Z_1'\|_2^2 \leq mv_1.\]
By the assumption and Markov property, we have \[\|\mathbb E_\theta[\mathbf Z_t | \mathbf Z_1] - \mathbb E_\theta[\mathbf Z_t' | \mathbf Z_1']\|_1=\|\mathbb E_{\mathbf Z_1}[\mathbf Z_{t-1}]-\mathbb E_{\mathbf Z_1'}[\mathbf Z_{t-1}']\|_1 \leq r^{t-1}\|\mathbf Z_1-\mathbf Z_1'\|_1.\]
Thus, for $\theta \in \tilde{\mathcal S}$, \begin{align*}
    \mathbb{V}\mathrm{ar}_\theta [\mathbb E_\theta (\mathbf Z_t|\mathbf Z_1)]&= \frac{1}{2}\mathbb E_\theta \|\mathbb E_{\mathbf Z_1} \mathbf Z_{t-1}-\mathbb E_{\mathbf Z_1'} \mathbf Z_{t-1}'\|_2^2  \leq \frac{1}{2}\mathbb E_\theta \|\mathbb E_{\mathbf Z_1} \mathbf Z_{t-1}-\mathbb E_{\mathbf Z_1'} \mathbf Z_{t-1}'\|_1^2
    \\ &\leq \frac{1}{2}\mathbb E_\theta \Big[r^{2(t-1)}\|\mathbf Z_1-\mathbf Z_1'\|_1^2\Big] \leq  mv_1 r^{2(t-1)}.
\end{align*}
By the Markov property, for every $\theta \in \tilde{\mathcal S}$, $\mathbb{V}\mathrm{ar}_\theta [\mathbf Z_t | \mathbf Z_1 ] \leq v_{t-1}$, so \[\sup_{\theta \in \mathcal S}\mathbb E_\theta [\mathbb{V}\mathrm{ar}_\theta [\mathbf Z_t|\mathbf Z_1]] \leq v_{t-1}.\]
The total variance formula holds since we are using the $l^2$ norm.
Thus, taking supremum over $\theta \in \tilde{\mathcal S}$ in the total variance formula $\mathbb{V}\mathrm{ar}_\theta \mathbf Z_t= \mathbb E_\theta \big[\mathbb{V}\mathrm{ar}_\theta [\mathbf Z_t|\mathbf Z_1]\big] + \mathbb{V}\mathrm{ar}_\theta \big[\mathbb E_\theta [\mathbf Z_t|\mathbf Z_1]\big]$, we have $v_t \leq v_{t-1}+mv_1r^{2(t-1)}$.
Upon iterating,   \[v_t \leq mv_1 \sum_{t=1}^{t} r^{2(t-1)} \leq mv_1 \min\big\{t, (1-r^2)^{-1}\big\}.\]
\end{proof}

The following proposition is an important result bounding the variance of magnetization chains uniformly in time.
\begin{prop}\thlabel{magvariancebound}
Let $\beta<\beta_{cr}$.
For an arbitrary starting configuration $\mathbf s$ and $t \geq 0$, we have \[\sum_{i=1}^m\mathbb{V}\mathrm{ar}_{\mathbf s}({S^{(i)}_{t}})=C/n\] where $C>0$ only depends on $p_1,\dots,p_m$, and $\beta$.
\end{prop}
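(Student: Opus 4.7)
The plan is to apply Lemma \thref{variancebound} to a linearly rescaled magnetization chain so that the contraction statement of Proposition \thref{normcontraction} becomes exactly the hypothesis of that lemma. Concretely, set $\mathbf Z_t \colonequals \mathbf a \circ \mathbf S_t$. Since this is an invertible coordinatewise scaling of $\mathbf S_t$, $(\mathbf Z_t)$ inherits the Markov property and lives in a finite subset $\tilde{\mathcal S} \subset \mathbb R^m$. The ``moreover'' part of Proposition \thref{normcontraction} reads
\[\|\mathbb E_{\mathbf s} \mathbf Z_t - \mathbb E_{\mathbf s'} \mathbf Z_t\|_1 \leq g^t \|\mathbf a \circ \mathbf s - \mathbf a \circ \mathbf s'\|_1,\]
which is precisely the contraction hypothesis of Lemma \thref{variancebound} for $(\mathbf Z_t)$ with $r=g$. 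Because $\beta<\beta_{cr}$, Proposition \thref{upsilon} gives $\upsilon = 1-\beta/\beta_{cr} > 0$ and $g = 1-\upsilon/n < 1$, so $(1-g^2)^{-1}\leq (1-g)^{-1} = n/\upsilon$ is the relevant constant.

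Next I need the one-step variance $v_1 \colonequals \sup_{\theta \in \tilde{\mathcal S}} \mathbb{V}\mathrm{ar}_\theta \mathbf Z_1$. In a single Glauber step the vector $\mathbf S_t$ either stays put or changes by $\pm 2/n$ in exactly one coordinate, so almost surely $\|\mathbf Z_1 - \mathbf Z_0\|_2 \leq (2/n)\max_i a_i$. Using $\mathbb{V}\mathrm{ar}_\theta \mathbf Z_1 \leq \mathbb E_\theta \|\mathbf Z_1-\mathbf Z_0\|_2^2$, this yields $v_1 \leq 4(\max_i a_i)^2 / n^2$, a bound whose constant depends only on $p_1,\dots,p_m$ by Proposition \thref{upsilon}. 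Lemma \thref{variancebound} then gives
\[\sup_{\theta\in\tilde{\mathcal S}} \mathbb{V}\mathrm{ar}_\theta \mathbf Z_t \leq m\, v_1 (1-g^2)^{-1} \leq \frac{4m(\max_i a_i)^2}{\upsilon\, n}.\]

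Finally, I unpack. Since $\mathbb{V}\mathrm{ar}_{\mathbf s} \mathbf Z_t$ is defined through the $l^2$ norm, it equals $\sum_{i=1}^m a_i^2\, \mathbb{V}\mathrm{ar}_{\mathbf s}(S^{(i)}_t)$; dividing by $(\min_i a_i)^2 > 0$ converts this to the unweighted sum and yields $\sum_{i} \mathbb{V}\mathrm{ar}_{\mathbf s}(S^{(i)}_t) \leq C/n$ with $C$ depending only on $p_1,\dots,p_m,\beta$, as claimed. The only subtlety I anticipate is the bookkeeping to make sure every constant stays uniform in $n$ --- that is, that the eigenvector $\mathbf a$, the ratio $\max_i a_i / \min_i a_i$, and $1/\upsilon$ all depend only on $p_1,\dots,p_m,\beta$; this is exactly what Proposition \thref{upsilon} (combined with the positivity guaranteed by Perron--Frobenius) supplies, so no genuine obstacle remains.
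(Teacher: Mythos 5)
Your proposal is correct and follows essentially the same route as the paper: rescale to $\mathbf Z_t = \mathbf a \circ \mathbf S_t$, feed the contraction from Proposition \thref{normcontraction} into Lemma \thref{variancebound} with $r=g$, bound $v_1$ by the $2/n$ increment size, use $(1-g^2)^{-1}\leq n/\upsilon$, and undo the scaling via $\min_i a_i$. The only cosmetic difference is that the paper invokes Lemma \thref{finalfinallemma} to identify $\max_i a_i = a_1$ and $\min_i a_i = a_m$, whereas you appeal directly to Perron--Frobenius positivity and Proposition \thref{upsilon}; both are sound and yield the same constant up to naming.
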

\begin{proof}
Observe that $\sum_{i=1}^m  \mathbb{V}\mathrm{ar}_{\mathbf s}(a_i S^{(i)}_{t}) = \mathbb{V}\mathrm{ar}_{\mathbf s}(\mathbf a \circ \mathbf S_t)$.
Note that increments of $\mathbf S_t$ are bounded by $2/n$ in absolute value.
Then, from \thref{finalfinallemma}, we have \[\sum_{i=1}^m \mathbb{V}\mathrm{ar}_{\mathbf s}{a_i S^{(i)}_{1}} \leq  a_1^2 (2/n)^2.\]
By \thref{finalfinallemma}, \thref{normcontraction}, and \thref{variancebound}, we have
\[a_m^2\sum_{i=1}^m\mathbb{V}\mathrm{ar}_{\mathbf s}({S^{(i)}_{t}}) \leq \sum_{i=1}^m\mathbb{V}\mathrm{ar}_{\mathbf s}(a_i{S^{(i)}_{t}}) \leq m\frac{4a_1^2}{n^2}\frac{1}{1-g^2} = \frac{4ma_1^2}{\upsilon n(1+g)}\leq \frac{4ma_1^2}{\upsilon n}. \]
Note that \thref{upsilon} assures $\upsilon>0$.
\end{proof}

We also establish a bound for the expected magnetization on subsets of partitions.
To that end, we need the following observation.
\begin{lem}\thlabel{zerospin}
    For each $i \in V$, $\mathbb E_{\mu}(\sigma(i))=0$  where $\mu$ is the Gibbs distribution.
    In particular, we have $\mathbb E_\mu (S^{(i)})=0$.
    \end{lem}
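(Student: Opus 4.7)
The plan is to exploit the spin-flip symmetry of the Hamiltonian. Since $H(\sigma)=-\sum_{ij\in E}h_{ij}\sigma(i)\sigma(j)$ is a sum of products of pairs of spins, flipping every spin simultaneously leaves it invariant: $H(-\sigma)=H(\sigma)$, and hence the Gibbs weight satisfies $\mu(-\sigma)=\mu(\sigma)$. This is exactly the "no external field" symmetry already noted in the remark after \thref{magmarkov}.

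With that symmetry in hand, I would write
\[
\mathbb E_\mu(\sigma(i))=\sum_{\sigma\in\Omega}\sigma(i)\,\mu(\sigma)
\]
and then perform the change of variables $\sigma\mapsto -\sigma$, which is a bijection of $\Omega$ onto itself. Since $\mu(-\sigma)=\mu(\sigma)$ and $(-\sigma)(i)=-\sigma(i)$, the sum equals $-\mathbb E_\mu(\sigma(i))$, forcing $\mathbb E_\mu(\sigma(i))=0$.

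For the second assertion, apply linearity of expectation to $S^{(i)}=\frac{1}{n}\sum_{v\in J_i}\sigma(v)$ to conclude $\mathbb E_\mu(S^{(i)})=0$ immediately from the pointwise vanishing just established.

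There is really no substantial obstacle here: the only thing to be careful about is that the statement holds in the absence of an external field, which is precisely our standing assumption in Section \ref{subsection1.1}. If an external magnetic field were present, the sign-flip symmetry would be broken and the conclusion would fail, but that is outside the scope of our model.
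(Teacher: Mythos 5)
Your proposal is correct and uses the same key idea as the paper: the spin-flip symmetry $\mu(-\sigma)=\mu(\sigma)$ combined with the bijection $\sigma\mapsto-\sigma$. The paper phrases the final step by splitting the sum over $\{\sigma:\sigma(i)=1\}$ and $\{\sigma:\sigma(i)=-1\}$, while you phrase it as a change of variables yielding $\mathbb E_\mu(\sigma(i))=-\mathbb E_\mu(\sigma(i))$; these are the same argument.
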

    \begin{proof}
    Since $\mu(\sigma)=\mu(-\sigma)$ for each configuration $\sigma$ and $\sigma \mapsto -\sigma$ is a bijection from $\Omega$ into itself, we have $\mathbb E_{\mu}(\sigma(i))= \sum_\sigma \sigma(i) \mu(\sigma) = \sum _{\sigma : \sigma(i)=1} \mu(\sigma)-\sum _{\sigma : \sigma(i)=-1} \mu(\sigma)=0$. 
\end{proof}

\begin{prop}[Expected magnetization bound]\thlabel{expectedmagbound}
Let $\beta< \beta_{cr}$ and $1\leq i \leq m$. For any $B \subseteq J_i$ and a chain $(\sigma_t)_{t \geq 0}$ starting at $\sigma \in \Omega$, define $M_t (B) \colonequals \frac{1}{2}\sum_{k \in B}{\sigma_{t}} (k)$.
Then \[|\mathbb E_\sigma M_t(B)| \leq |B| g^t /\sqrt{p_i}.\]
Furthermore, for $t \geq \frac{1}{2(1-\beta/\beta_{cr})} n\ln n$, we have \[\mathbb{V}\mathrm{ar}_\sigma(M_t (B)) =O(n)\ , \quad \mathbb E_\sigma|M_t (B)| =  O(\sqrt{n}).\]  
\end{prop}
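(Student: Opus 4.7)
The plan is to prove the three assertions in sequence, bootstrapping from the machinery of Sections 2--3.

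\emph{First inequality.} I would realize $(\sigma_t)$ jointly with the two extremal chains $(\sigma_t^\pm)$ started from the all-$\pm 1$ configurations via a common monotone coupling, so that $\sigma_t^-(k)\leq\sigma_t(k)\leq\sigma_t^+(k)$ pointwise. The remark after \thref{magmarkov} gives $\sigma_t^-\stackrel{d}{=}-\sigma_t^+$, hence $|\mathbb E_\sigma\sigma_t(k)|\leq\mathbb E\sigma_t^+(k)$. Permuting sites within $J_i$ preserves both the dynamics and the initial state $\mathbf 1$, so the law of $\sigma_t^+$ is exchangeable on $J_i$ and $\mathbb E\sigma_t^+(k)=\mathbb E S_t^{(i),+}/p_i$ for every $k\in J_i$. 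Applying \thref{lemma} to the monotone pair $(\mathbf 1,-\mathbf 1)$ together with $\mathbb E S_t^{(i),-}=-\mathbb E S_t^{(i),+}$ yields $\mathbb E S_t^{(i),+}\leq\sqrt{p_i}\,g^t$, and summing $|B|$ such bounds (with the prefactor $1/2$ in the definition of $M_t$) produces the first inequality with a spare factor of $2$.

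\emph{Variance bound.} I would upgrade the Section 2--3 machinery to an augmented Markov chain that remembers the finer decomposition $J_i=B\sqcup(J_i\setminus B)$. Specifically, I track
\[
\tilde{\mathbf S}_t \colonequals \bigl(\tfrac{2}{n}M_t(B),\ \tfrac{2}{n}M_t(J_i\setminus B),\ (S_t^{(j)})_{j\neq i}\bigr)^T\in\mathbb R^{m+1}.
\]
A calculation parallel to \thref{magmarkov} shows $\tilde{\mathbf S}_t$ is itself Markov, and rerunning the proof of \thref{monotonecontraction} coordinatewise produces an $(m+1)\times(m+1)$ contraction matrix $\tilde{\mathbf A}$. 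Its only structural novelty is the zero block $\tilde{\mathbf A}_{B,J_i\setminus B}=\tilde{\mathbf A}_{J_i\setminus B,B}=0$, reflecting that flipping a spin in one half of $J_i$ leaves the mean field seen by the other half unchanged.

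The decisive step is the spectral theory of $\tilde{\mathbf A}$. The projection $P\colon\mathbb R^{m+1}\to\mathbb R^m$ which sums the first two coordinates into the $i$-th slot intertwines $P\tilde{\mathbf A}=\mathbf A P$, while the one-dimensional kernel of $P$ is $\tilde{\mathbf A}$-invariant with eigenvalue $1-1/n\leq g$. Consequently the spectral radius of $\tilde{\mathbf A}$ equals $g$ and $P^T\mathbf a=(a_i,a_i,a_1,\ldots,\widehat{a_i},\ldots,a_m)^T$ is a strictly positive left Perron eigenvector. With these data the proofs of \thref{normcontraction} and \thref{magvariancebound} carry over verbatim with $\tilde{\mathbf A}$ in place of $\mathbf A$, giving $\mathbb{V}\mathrm{ar}_\sigma\bigl((P^T\mathbf a)\circ\tilde{\mathbf S}_t\bigr)=O(1/n)$ uniformly in $t$; extracting the first coordinate and multiplying by $(n/2)^2$ recovers $\mathbb{V}\mathrm{ar}_\sigma(M_t(B))=O(n)$.

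\emph{Third claim and main obstacle.} By Cauchy--Schwarz, $\mathbb E_\sigma|M_t(B)|\leq\sqrt{\mathbb{V}\mathrm{ar}_\sigma(M_t(B))+(\mathbb E_\sigma M_t(B))^2}$. For $t\geq\tfrac{1}{2(1-\beta/\beta_{cr})}n\ln n$ one has $g^t\leq e^{-\upsilon t/n}\leq n^{-1/2}$, and the first inequality gives $|\mathbb E_\sigma M_t(B)|\leq |B|g^t/\sqrt{p_i}\leq\sqrt{np_i}=O(\sqrt n)$, which combines with the $O(n)$ variance to yield $\mathbb E_\sigma|M_t(B)|=O(\sqrt n)$. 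The hard part is the augmented-chain step: constructing $\tilde{\mathbf A}$ and locating its Perron data via the intertwining $P\tilde{\mathbf A}=\mathbf A P$ is the only place that requires genuinely new work beyond quoting the tools of Sections 2--3.
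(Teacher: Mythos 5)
Your proposal is correct, and for the first and third claims it is essentially the paper's argument: monotone coupling against the extremal $\pm 1$ chains, the $\sigma\mapsto-\sigma$ symmetry, exchangeability of sites within $J_i$, and (for the third claim) Cauchy--Schwarz with $g^t\leq n^{-1/2}$ once $t\geq t_n$. Your derivation of $\mathbb E S_t^{(i),+}\leq\sqrt{p_i}\,g^t$ via \thref{lemma} applied to the pair $(\mathbf 1,-\mathbf 1)$ is a clean route to the site bound and indeed recovers the constant with an extra factor of $2$ to spare.

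For the variance bound your route is genuinely different from the paper's, and it is worth contrasting the two. The paper stays entirely within the original $m$-coordinate machinery: it uses $\mathbb{V}\mathrm{ar}(S_t^{(i)})=O(1/n)$ to get $\mathbb E_+(M_t(J_i)^2)=O(n)$, then exploits exchangeability within $J_i$ to write this second moment in terms of the two-point function $\mathbb E_+[\sigma_t^+(v_1)\sigma_t^+(v_2)]$, deduces $|\mathbb E_+[\sigma_t^+(v_1)\sigma_t^+(v_2)]|=O(1/n)$, and then reassembles $\mathbb E_+(M_t(B)^2)=|B|+\binom{|B|}{2}\,\mathbb E_+[\sigma\sigma]\leq O(n)$ for arbitrary $B\subseteq J_i$; transferring to an arbitrary start $\sigma$ is done pointwise via $M_t(B)^2\leq(M_t^+(B))^2+(M_t^-(B))^2$ under the monotone coupling. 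Your approach instead lifts the whole Section~2--3 apparatus to an $(m+1)$-dimensional chain that resolves $J_i$ into $B\sqcup(J_i\setminus B)$, and the key observations you need --- the zero block $\tilde{\mathbf A}_{B,J_i\setminus B}=\tilde{\mathbf A}_{J_i\setminus B,B}=0$ (flipping in one half of $J_i$ doesn't change the mean field felt by the other half), the intertwining $P\tilde{\mathbf A}=\mathbf A P$, the invariance of $\ker P$ with eigenvalue $1-1/n\leq g$, and $P^T\mathbf a>\mathbf 0$ being a left $g$-eigenvector, hence by Perron--Frobenius (using that $\tilde{\mathbf A}$ is irreducible despite the zero block) $\rho(\tilde{\mathbf A})=g$ --- are all correct. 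The paper's argument is shorter and reuses only the statements already proved; yours builds more machinery but gives the variance bound uniformly in the starting configuration in one shot, without passing through the extremal chains or the second-moment/exchangeability computation, and would extend more transparently to further refinements of the partition. Both are sound; neither has a gap.

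One small side remark: in the intermediate inequality $\mathbb E_+(M_t(J_i)^2)=np_i+\binom{np_i}{2}\mathbb E_+(\sigma_t^+(v_1)\sigma_t^+(v_2))$ the paper drops the factor $\tfrac14$ coming from $M_t=\tfrac12\sum\sigma_t$, but this is cosmetic and does not affect the $O(n)$ conclusion; your augmented-chain route sidesteps this computation entirely.
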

\begin{proof}
Let "+" denote the configuration such that all spins are $1$ and "-" denote the configuration with all spins $-1$.
Let $(\sigma_{t}^+, \sigma_{t}^\mu  ,\sigma_{t} ^-)$ be a monotone coupling with starting configuration $(+,\mu,-)$ where $\mu$ is the stationary distribution.
Let $i \in \{1,\dots,m\}$.
By \thref{lemma} and \thref{zerospin}, \begin{align*}
    \mathbb E_+[M_t(J_i)^+] \leq \mathbb E_{+,\mu}|M_t(J_i)^+ -M_t(J_i)^\mu| +\mathbb E_\mu [M_t(J_i)^\mu] \leq n\sqrt{p_i} g^t.
\end{align*}
Then, by symmetry, for $v \in J_i$, $\mathbb E_+[M_t(v)] \leq n\sqrt{p_i}g^t/|J_i| =g^t/\sqrt{p_i}$.
Thus, by summing over sites in $B$,   $\mathbb E_+[M_t(B)^+] \leq |B|g^t/\sqrt{p_i}$.
However, for any configuration $\sigma$, by monotonicity, $\mathbb E_+[M_t(B)^+] \geq \mathbb E_\sigma [M_t(B)] \geq \mathbb E_-[M_t(B)^-]$.
Considering the remark after \thref{magmarkov}, $\mathbb E_- [M_t(B)^-]= -\mathbb E_+ [M_t(B)^+]$.
Thus, $|\mathbb E_\sigma [M_t(B)]| \leq |\mathbb E_+[M_t(B)^+]| \leq |B|g^t/\sqrt{p_i}$ for any $\sigma$.

Now, by \thref{magvariancebound}, $ O(1/n)=\mathbb{V}\mathrm{ar}{S^{(i)}_{t}} =\mathbb{V}\mathrm{ar}(M_t(J_i)2/n) $, so \[\mathbb{V}\mathrm{ar}_+( M_t(J_i) ) = O(n). \]
Thus, for $t \geq \frac{1}{2(1-\beta/\beta_{cr})} n\ln n$, \[\mathbb E_+(M_t(J_i)^2)=\mathbb{V}\mathrm{ar}_+(M_t(J_i))+(\mathbb E_+M_t(J_i))^2=O(n)\]
However, by symmetry, for any fixed $v_1,v_2 \in J_i$, \[\mathbb E_+(M_t(J_i)^{2})=np_i+\binom{np_i}{2}\mathbb E_+(\sigma_{t}^+(v_1)\sigma_{t}^+(v_2)). \]
Thus, \[|\mathbb E_+\sigma_{t}^+(v_1)\sigma_{t}^+(v_2)|=O(1/n).\]
Likewise, for $B\subseteq J_i$, \[\mathbb E_+(M_t(B)^2) =|B|+ \binom{|B|}{2}\mathbb E_+(\sigma_{t}^+(v_1)\sigma_{t}^+(v_2)) \leq O(n). \]
Similarly, $\mathbb E_-M_t(B)^2  \leq O(n)$, so from $(M_t(B))^2 \leq (M_t(B)^+)^2+(M_t(B)^-)^2$, \[\mathbb E(M_t(B)^2) =O(n) \] whenever $t \geq \frac{1}{2(1-\beta/\beta_{cr})} n\ln n$.
Thus, for $t \geq \frac{1}{2(1-\beta/\beta_{cr})} n\ln n$, \[\mathbb{V}\mathrm{ar}_\sigma(M_t(B))=O(n).\] 

Lastly, for $t \geq \frac{1}{2(1-\beta/\beta_{cr})} n\ln n$, from Jensen's inequality, \begin{align*}
     \mathbb E_\sigma|M_t (B)| &\leq \sqrt{\mathbb E_\sigma|M_t (B)| ^2} =\sqrt{(\mathbb E_\sigma[M_t (B)])^2 +\mathbb{V}\mathrm{ar}_\sigma(M_t (B))}
    \\  &\leq |\mathbb E_\sigma[M_t (B)]| + \sqrt{\mathbb{V}\mathrm{ar}_\sigma(M_t (B))}=O(\sqrt{n}).
\end{align*}
\end{proof}

\section{Couplings}\label{section4}
Fix the notation \[t_n \colonequals \frac{1}{2(1-\beta/\beta_{cr})} n \ln n.\]

\begin{defi}[Modified matching]
Let $\sigma\in \Omega$ and $\sigma'\in \Omega$ have magnetizations $\mathbf s \in \mathcal S$ and $\mathbf s'\in \mathcal S$, respectively.
Consider two copies of the graph, $V=\bigcup_i J_i$ and $V'=\bigcup_i J_i'$.
Let $i \in \{1,\dots,m\}$.
If $s^{(i)} \geq s'^{(i)}$, then it is possible to match each site in $J_i'$ with $+1$ spin to a site in $J_i$ with $+1$ spin.
Any leftover sites in $J_i'$ are arbitrarily matched to the leftover sites in $J_i$.
We match the sites in a similar way whenever $s^{(i)}\leq s'^{(i)}$.
This defines a bijection $f_{\sigma,\sigma'}\colon V \to V'$.

We call this bijection a \emph{modified matching of $\sigma$ and $\sigma'$}.
\end{defi}

\begin{defi}[Modified monotone update and coupling]
Let $f_{\sigma,\sigma'}\colon V \to V'$ be a modified matching of $\sigma,\sigma' \in\Omega$.
Let $I$ and $U$ be uniformly distributed over $V=\bigcup_{i=1}^m J_i$ and $[0,1]\subseteq \mathbb R$, respectively, and be independent.
Suppose $I \in J_\eta$ for some $\eta \in \{1,\dots, m\}$ is the chosen site in $V$.
Consider the case $\sum_{v \notin J_\eta }\sigma(v) \leq \sum_{v \notin J_\eta }\sigma'(v)$.
If \[U< \frac{1+\tanh\left(\beta \sum_{v \notin J_\eta }\sigma(v) \right)}{2},\] then update the chosen site $I$ of $V$ by +1 and $f_{\sigma,\sigma'}(I)$ of $V'$ by +1.
If \[ U\geq \frac{1+\tanh\left(\beta \sum_{v \notin J_\eta }\sigma'(v) \right)}{2},\] then update the chosen site $I$ of $V$ by -1 and $f_{\sigma,\sigma'}(I)$ of $V'$ by -1.
Otherwise, if \[\frac{1+\tanh\left(\beta \sum_{v \notin J_\eta }\sigma(v) \right)}{2} \leq U < \frac{1+\tanh\left(\beta \sum_{v \notin J_\eta }\sigma'(v) \right)}{2},\] then update the chosen site $I$ of $V$ by -1 and $f_{\sigma,\sigma'}(I)$ of $V'$ by +1.
The other case $\sum_{v \notin J_\eta }\sigma(v) > \sum_{v \notin J_\eta }\sigma'(v)$ can similarly be updated.

Given the chosen site $I$, we call the above procedure of deciding the updating spin in the two chains a \emph{modified monotone update} with respect to the given modified matching.

Now, fix a modified matching $f_{\sigma,\sigma'}$ of $\sigma$ and $\sigma'$.
Let ${\sigma_{t}}$ and ${\sigma'_{t}}$ be chains starting at $\sigma$ and $\sigma'$, respectively.
Repeating the above procedure independently for each step with respect to $f_{\sigma,\sigma'}$ gives a coupling of the Glauber dynamics.
We call this coupling a \emph{modified monotone coupling} with respect to the given modified matching.
\end{defi}
\begin{rmk}
\thref{monotonecontraction} and its consequences hold with a suitable distance function for a modified coupling with respect to a given modified matching.
\end{rmk}

We first construct a coupling such that the magnetizations agree after $t_n +O(n)$ steps in the next two lemmas.

\begin{lem}[Lemma 2.4, \cite{levin}]\thlabel{supermartingale}
    Let $(W_t)_{t \geq 0 }$ be a non-negative supermartingale with a stopping time $\tau$ satisfying 
    \\\emph{(i)} \ $W_{0} =k$
    \\\emph{(ii)} \ $W_{t+1}-W_{t} \leq B <\infty$
    \\\emph{(iii)} \  $\mathbb{V}\mathrm{ar}(W_{t+1}|\mathcal F_t) >\sigma ^2 >0 \ \text{on the event} \ \{\tau >t\}$.
    Then for $u> \frac{4B^2}{3\sigma^2}$, \[\mathbb P_k(\tau >u) \leq \frac{4k}{\sigma\sqrt{u}}.\]
\end{lem}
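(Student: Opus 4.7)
The lemma is cited as Lemma 2.4 in the Levin-Luczak-Peres paper; the plan here outlines the standard martingale decomposition behind it.

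Via Doob's decomposition write $W_t = k + N_t - A_t$, where $(N_t)$ is a martingale with $N_0 = 0$ and $(A_t)$ is a predictable non-decreasing process with $A_0 = 0$. Set $T \colonequals \tau \wedge u$. Predictability of $A$ gives $\mathrm{Var}(N_{t+1}|\mathcal{F}_t) = \mathrm{Var}(W_{t+1}|\mathcal{F}_t) > \sigma^2$ on $\{\tau > t\}$, and optional stopping of the martingale $N_t^2 - \langle N\rangle_t$ at the bounded time $T$ yields the key lower bound
\[
\mathbb{E}[N_T^2] \;=\; \mathbb{E}[\langle N\rangle_T] \;\geq\; \sigma^2\, \mathbb{E}[T] \;\geq\; \sigma^2 u\,\mathbb{P}_k(\tau > u).
\]

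For the complementary side, the triangle inequality $|N_T| \leq W_T + k + A_T$, paired with $\mathbb{E}[W_T] \leq k$ (non-negative supermartingale) and $\mathbb{E}[A_T] = k - \mathbb{E}[W_T] \leq k$ (which follows from $\mathbb{E}[N_T] = 0$), gives $\mathbb{E}|N_T| \leq 3k$. To bridge $L^1$ and $L^2$, I would use that (ii) bounds the positive jumps of $W$, hence also of $A$, by $B$, so the positive jumps of $N$ are bounded by $2B$. An Azuma-style control on the positive excursions of $N$ (or Doob's maximal inequality applied to a truncation at a level of order $B\sqrt{u}$) then upgrades the $L^1$ bound to $\mathbb{E}[N_T^2] \lesssim k B \sqrt{u}$. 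Comparing with the lower bound gives $\sigma^2 u\,\mathbb{P}_k(\tau > u) \lesssim k B \sqrt{u}$, i.e.\ $\mathbb{P}_k(\tau > u) \lesssim k/(\sigma\sqrt{u})$, and the hypothesis $u > 4B^2/(3\sigma^2)$ is exactly what is needed to absorb the lower-order terms and produce the explicit constant $4$.

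The main obstacle is the $L^1$-to-$L^2$ bridge: condition (ii) is genuinely one-sided, so while positive jumps of $N$ are controlled by $2B$, the negative jumps can be as large as $W_t$ and a priori grow linearly in $t$. Closing this asymmetry requires leveraging non-negativity of $W$ together with the structure of the predictable compensator $A$, rather than a naive two-sided bounded-difference argument; this is the technical heart of the proof.
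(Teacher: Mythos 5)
The paper states this lemma as a citation to Levin--Luczak--Peres and offers no proof of its own, so the comparison here is against a correct argument rather than against the paper. Your lower-bound half is fine: the Doob decomposition $W_t=k+N_t-A_t$, the identity $\mathbb{E}[N_T^2]=\mathbb{E}[\langle N\rangle_T]\geq \sigma^2\mathbb{E}[T]\geq \sigma^2 u\,\mathbb{P}_k(\tau>u)$ with $T=\tau\wedge u$, and the bound $\mathbb{E}|N_T|\leq 3k$ (in fact $=2k$, since $\mathbb{E}|N_T|=2\mathbb{E}[N_T^-]$ and $N_T\geq -k$) are all correct and standard.

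The gap you flag as ``the technical heart'' is, however, a genuine error rather than a detail to be filled in. You assert that condition (ii) bounds the positive jumps of $A$, and hence of $N$, by a constant multiple of $B$. This is false. The increment of $A$ is $A_{t+1}-A_t=-\mathbb{E}[W_{t+1}-W_t\mid\mathcal F_t]$. Condition (ii) only says $W_{t+1}-W_t\leq B$; it places no lower bound on $W_{t+1}-W_t$ other than $W_{t+1}-W_t\geq -W_t$ coming from non-negativity. Consequently $A_{t+1}-A_t$ can be as large as $W_t$, which is itself uncontrolled (the supermartingale starting at $k$ with jumps $\leq B$ can reach height of order $Bu$ within $u$ steps). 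The positive jumps of $N_{t+1}-N_t=(W_{t+1}-W_t)+(A_{t+1}-A_t)$ are therefore not bounded by $2B$, and the Azuma/Doob truncation you appeal to does not apply. This is not a bookkeeping issue; it is precisely the one-sidedness you name in your last paragraph, and the argument as written does not overcome it.

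There is a second, independent problem: even if your claimed intermediate bound $\mathbb{E}[N_T^2]\lesssim kB\sqrt{u}$ were granted, the conclusion would not follow. Combined with the lower bound $\sigma^2 u\,\mathbb{P}_k(\tau>u)\leq\mathbb{E}[N_T^2]$ it would give $\mathbb{P}_k(\tau>u)\lesssim kB/(\sigma^2\sqrt{u})$, which differs from the target $4k/(\sigma\sqrt{u})$ by a factor of $B/\sigma$. The hypothesis $u>4B^2/(3\sigma^2)$ allows $B$ to be as large as $\sigma\sqrt{3u}/2$, so this extra factor can be of order $\sqrt{u}$; it cannot be ``absorbed'' to recover the stated constant. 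The correct shape of the upper bound must involve $\sigma$ rather than $B$, and getting there requires a different device (a second stopping time controlling either $\max_s W_s$ or $\max_s N_s$ together with the Doob maximal inequality for the non-negative supermartingale $W$ or the non-negative martingale $N+k$), not an Azuma-type concentration estimate on $N$.
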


\begin{lem}[Magnetization coupling]\thlabel{magcoupling}
Let $\beta< \beta_{cr}$. 
For any configurations $\sigma$ and $\sigma'$, there exists a coupling $({\sigma_{t}} ,{\sigma'_{t}})$ with starting states $(\sigma, \sigma')$ satisfying the following condition.
If $\tau_{mag} \colonequals \min \{t \geq 0: \mathbf S_t = \mathbf S_t'\}$, then for large $\gamma n$, \[\mathbb P_{\sigma,\sigma'}(\tau_{mag} >t_n +\gamma n) \leq \frac{c}{\sqrt{\gamma}}\] where $c>0$ is a constant not depending on $\sigma$, $\sigma'$, or $n$.
\end{lem}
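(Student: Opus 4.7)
My plan is a two-phase coupling: a \emph{monotone} phase of duration $t_n$ followed by a \emph{modified monotone} phase of duration $\gamma n$, with the concluding bound coming from applying the supermartingale \thref{supermartingale} to the weighted magnetization difference
\[W_t \colonequals \|\mathbf a \circ \mathbf S_t - \mathbf a \circ \mathbf S'_t\|_1.\]
Since all entries of $\mathbf a$ are positive, $W_t = 0$ iff $\mathbf S_t = \mathbf S'_t$, so it suffices to show that $W_{t_n+\gamma n} = 0$ with probability at least $1 - c/\sqrt\gamma$.

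For the first phase I would couple four chains starting from $+$, $\sigma$, $\sigma'$, $-$ simultaneously by a monotone coupling (sharing $(I_t,U_t)$ at every step). The sandwich $\sigma_t^- \le \sigma_t, \sigma'_t \le \sigma_t^+$ and \thref{normcontraction} together give
\[\mathbb E W_t \le \mathbb E\|\mathbf a \circ \mathbf S_t^+ - \mathbf a \circ \mathbf S_t^-\|_1 \le g^t \|\mathbf a \circ \mathbf 1 - \mathbf a \circ (-\mathbf 1)\|_1 = 2\|\mathbf a\|_1 g^t.\]
With $g = 1-\upsilon/n$ and $\upsilon = 1-\beta/\beta_{cr} > 0$ (\thref{upsilon}), one has $g^{t_n} \le e^{-\upsilon t_n/n} = n^{-1/2}$, and hence $\mathbb E W_{t_n} = O(n^{-1/2})$.

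For the second phase I switch at time $t_n$ to the modified monotone coupling with matching $f_{\sigma_{t_n},\sigma'_{t_n}}$ and verify the three hypotheses of \thref{supermartingale} for $(W_{t_n+t})_{t\ge 0}$. The Remark following the modified monotone coupling definition asserts that \thref{monotonecontraction}, and therefore \thref{normcontraction}, transfer to the modified setting, giving $\mathbb E[W_{t+1}\mid\mathcal F_t] \le gW_t$, so $(W_t)$ is a nonnegative supermartingale. A single Glauber step changes at most one magnetization coordinate by $\pm 2/n$, so $|W_{t+1}-W_t| \le B$ with $B = O(1/n)$. When $W_t > 0$, picking a site whose matched pair disagrees (occurring with probability of order $1/n$) together with a suitable $U$ yields a change of order $1/n$ in $W_t$, giving $\mathrm{Var}(W_{t+1}\mid\mathcal F_t) \ge \sigma^2 = c/n^2$. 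Applying \thref{supermartingale} conditionally on $W_{t_n}$ (with $u = \gamma n > 4B^2/(3\sigma^2) = O(1)$ for $\gamma$ large) and then taking expectations gives
\[\mathbb P(\tau_{mag} > t_n + \gamma n) \le \mathbb E\Bigl[\frac{4W_{t_n}}{\sigma\sqrt{\gamma n}}\Bigr] \le \frac{4\cdot O(n^{-1/2})}{(\sqrt{c}/n)\sqrt{\gamma n}} = \frac{C}{\sqrt\gamma}.\]

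The main obstacle I foresee is the uniform variance lower bound $\mathrm{Var}(W_{t+1}\mid\mathcal F_t) \ge c/n^2$: at a lone disagreement the naive per-step variance is smaller, so one must exploit either re-matching at every step or the proportionality between $W_t$ and the length of the ``disagreement interval'' $[r_+(\sum_{v\notin J_\eta}\sigma(v)),\, r_+(\sum_{v\notin J_\eta}\sigma'(v)))$ appearing in the modified update. A secondary technical step is carefully justifying the Remark's claim, which presumably requires tracking the matching-adapted Hamming distance $\mathrm{dist}_i^f(\sigma,\sigma') := \#\{v\in J_i : \sigma(v)\ne\sigma'(f(v))\}$ to push the contraction through in the modified setting.
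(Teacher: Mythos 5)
Your two-phase outline (contraction to time $t_n$, then a supermartingale argument) matches the paper's strategy at the coarsest level, but the variance lower bound on which your second phase rests is wrong by a factor of $n$, and this is not a technicality you can patch over with re-matching: it is the central difficulty of this lemma, and the paper resolves it with a different coupling that you do not consider.

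Concretely, consider a state with a single disagreeing matched pair, so $W_t = 2a_i/n$. Under a modified monotone coupling, the probability of selecting that pair is $1/n$, and conditional on selecting it the increment of $W_t$ is $\pm 2a_i/n$ (and the residual contribution from agreeing pairs, where a divergent update requires $U$ to land in a window of length $O(\|\mathbf S_t - \mathbf S'_t\|_1) = O(1/n)$, is also at most $O(1/n)\cdot(2a_i/n)^2$). Hence $\mathrm{Var}(W_{t+1}\mid\mathcal F_t)=\Theta(1/n^3)$, not $c/n^2$. Feeding $\sigma^2 = c/n^3$, $B = O(1/n)$, $k = \mathbb E W_{t_n}=O(n^{-1/2})$, $u=\gamma n$ into \thref{supermartingale} yields $4k/(\sigma\sqrt u)=O(\sqrt{n/\gamma})$, which blows up with $n$. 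Re-matching at every step does not help: the minimal number of disagreeing pairs is forced by $|\mathbf S_t - \mathbf S'_t|$ and can be one, and the ``disagreement-interval'' observation only accounts for the $O(1/n)$ probability of divergence at agreeing pairs, which is of the same (too small) order.

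The paper's fix is a genuinely different coupling, not a refinement of yours: at each step $t\ge t_n$ it partitions the blocks into $K_t$ (blocks where $\frac{n}{2}|S^{(i)}_t-S'^{(i)}_t|\le 1$) and $L_t$ (the rest), runs the modified monotone update on $K_t$, but lets the two chains update \emph{independently} on $L_t$. On the event $\{\tau>t\}$ one has $L_t\ne\emptyset$, and independence on $L_t$ gives a \emph{constant} lower bound, roughly $p_1\bigl((1-\tanh(\beta(1-p_1)))/2\bigr)^2$, on the probability of a nonzero increment of $Y_{tot,t}:=\frac n2 W_t$. Since $Y_{tot}$ has $O(1)$ increments and $\mathbb E Y_{tot,t_n}=O(\sqrt n)$, \thref{supermartingale} then gives $\mathbb P(\tau>t_n+\gamma n)=O(\gamma^{-1/2})$ with the right $n$-dependence. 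The supermartingale property of $Y_{tot}$ under this mixed coupling is no longer a free consequence of \thref{normcontraction} and requires the separate case analysis ($J_i\subseteq K_t$ vs.\ $J_i\subseteq L_t$) carried out in the paper. Finally, after $\tau$, the paper does switch to a modified monotone coupling, but there $Y_{tot,\tau}=O(1)$ and plain contraction gives an $e^{-\upsilon\gamma'}$ tail, which is combined with the first bound to finish. Your proposal, as written, does not close the gap you flagged.
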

\begin{proof}

Let $({\sigma_{t}}, {\sigma'_{t}})$ be a monotone coupling with starting states $(\sigma,\sigma')$.
Put $Y_{i,t} \colonequals \frac{n}{2}a_i|{S^{(i)}_{t}}-{S'^{(i)}_{t}}|$ for $i=1,\dots,m$ and $Y_{tot,t}\colonequals \sum_{i=1}^m Y_{i,t}$.
Define \[\tau \colonequals \min \{t\geq t_n : \max_{1\leq i\leq m}Y_{i,t}/a_i\leq 1\}.\]
By \thref{normcontraction}, \[\mathbb E_{\sigma, \sigma '} [Y_{tot,t_n}] \leq c \sqrt{n}\] for some $c>0$.

We construct a coupling such that $(Y_{tot,t})_{t_n\leq t< \tau}$ is a positive supermartingale with bounded increments and the conditional probability of not being lazy is bounded away from zero uniformly in time and $n$.

To that end, consider a time $t_n\leq t< \tau$.
Define $K_t\colonequals \bigcup_{i: Y_{i,t}/a_i \leq 1} J_i$, $L_t\colonequals \bigcup_{i: Y_{i,t}/a_i > 1} J_i$, and $L_t'\colonequals \bigcup_{i: Y_{i,t}/a_i > 1} J_i'$.
Note that $L_t \neq \emptyset$ since $t < \tau$.
Choose a site equiprobably over $V=K_t \dot\cup L_t$.
Let $f_t$ be the modified matching of ${\sigma_{t}}$ and ${\sigma'_{t}}$.
If a site in $K_t$ is chosen, then use the modified monotone update with respect to $f_t$ to update $({\sigma_{t}},{\sigma'_{t}})$.
If a site in $L_t$ is chosen, then independently choose another site equiprobably over $L_t'$ (which can be the same site) to update ${\sigma'_{t}}$ independent of ${\sigma_{t}}$.
It is easy to check that the above is a coupling of the Glauber dynamics.

Clearly, $Y_{tot,t}$ has bounded increment with the above coupling.
Let $I$ be a random variable uniformly distributed over $V$ which is independent of $\mathcal F_t$.
Let $E= \{I\in L_t,{\sigma_{t}}(I)=+1, \sigma_{t+1}(I)=-1, \sigma_{t+1}'(f_t(I))=1\}$ and $F= \{I\in L_t,{\sigma_{t}}(I)=-1, \sigma_{t+1}(I)=+1, \sigma_{t+1}'(f_t(I))=-1\}$.
Since $L_t \neq \emptyset$ implies $|L_t|/n\geq p_1$, we obtain that $\mathbb P(Y_{tot,t+1}\neq Y_{tot,t}|\mathcal F_t)$ is bounded below by \begin{align*}
    &\geq \mathbb P(Y_{tot,t+1}\neq Y_{tot,t}, I \in L_t|\mathcal F_t) \geq \mathbb P(E\dot\cup F | \mathcal F_t)
    \\&\geq  \frac{|L_t| +\sum_{i\in L_t}{\sigma_{t}}(i)}{2n}\biggl(\frac{1-\tanh(\beta(1-p_1))}{2}\biggr)^2
    \\&\enspace+ \frac{|L_t| -\sum_{i\in L_t}{\sigma_{t}}(i)}{2n}\biggl(\frac{1-\tanh(\beta(1-p_1))}{2}\biggr)^2
    \\&\geq  p_1\biggl(\frac{1-\tanh(\beta(1-p_1))}{2}\biggr)^2>0.
\end{align*}
Finally, we need to show the supermartingale property.
Consider $Y_{1,t+1}/a_1-Y_{1,t}/a_1$.
Suppose $J_1 \subseteq K_t$.
Then by a direct calculation, on the event $\{J_1 \subseteq K_t\}$, it holds that $\mathbb E(Y_{1,t+1}/a_1-Y_{1,t}/a_1|\mathcal F_t)$ is bounded above by \begin{align*}
    &\leq \biggl(p_1 -\frac{|{S^{(1)}_{t}}-{S'^{(1)}_{t}}|}{2}\biggr)\frac{|\tanh(\beta \sum_{j\neq1}{S^{(j)}_{t}})-\tanh(\beta \sum_{j\neq1}{S'^{(j)}_{t}})|}{2}
    \\&\enspace -\frac{|{S^{(1)}_{t}}-{S'^{(1)}_{t}}|}{2}\biggl(1-\frac{|\tanh(\beta \sum_{j\neq1}{S^{(j)}_{t}})-\tanh(\beta \sum_{j\neq1}{S'^{(j)}_{t}})|}{2}\biggr)
    \\&\leq \frac{1}{2}\biggl(-|{S^{(1)}_{t}}-{S'^{(1)}_{t}}|+p_1\tanh\biggl(\beta \Bigl|\sum_{j\neq1}{S^{(j)}_{t}}-\sum_{j\neq1}{S'^{(j)}_{t}}\Bigl|\biggr)\biggr).
\end{align*}
Suppose $J_1 \subseteq L_t$.
Note that $Y_{1,t} >1$ implies $({S^{(1)}_{t+1}}-{S'^{(1)}_{t+1}})({S^{(1)}_{t}}-{S'^{(1)}_{t}})\geq 0 $ and $|{S^{(1)}_{t}}-{S'^{(1)}_{t}}|>0$.
Let $\xi= ({S^{(1)}_{t}}-{S'^{(1)}_{t}})/|{S^{(1)}_{t}}-{S'^{(1)}_{t}}| \in \{\pm 1\}$.
Then by equation \eqref{dynamics} in Section \ref{subsection4.2}, on the event $\{J_1 \subseteq L_t\}$, $\mathbb E(Y_{1,t+1}/a_1-Y_{1,t}/a_1|\mathcal F_t)$ is equal to \begin{align*}
    &= \xi \frac{n}{2}\biggl(\mathbb E({S^{(1)}_{t+1}}-{S^{(1)}_{t}}| {\sigma_{t}})-\mathbb E({S'^{(1)}_{t+1}}-{S'^{(1)}_{t}}| {\sigma'_{t}})\biggr)
    \\&=\xi \frac{n}{2}\frac{1}{n}\biggl(-{S^{(1)}_{t}}+p_1\tanh(\beta \sum_{j\neq1}{S^{(j)}_{t}}) \biggr) 
    \\&\quad - \xi \frac{n}{2}\frac{1}{n}\biggl(-{S'^{(1)}_{t}}+p_1\tanh(\beta \sum_{j\neq1}{S'^{(j)}_{t}}) \biggr)
    \\&= \frac{\xi}{2}\biggl(-({S^{(1)}_{t}}-{S'^{(1)}_{t}})+p_1\biggl(\tanh(\beta \sum_{j\neq1}{S^{(j)}_{t}})-\tanh(\beta\sum_{j\neq1}{S'^{(j)}_{t}})\biggr) \biggr)
    \\&\leq \frac{1}{2}\biggl(-|{S^{(1)}_{t}}-{S'^{(1)}_{t}}|+p_1\tanh\biggl(\beta \Bigl|\sum_{j\neq1}{S^{(j)}_{t}}-\sum_{j\neq1}{S'^{(j)}_{t}}\Bigl|\biggr) \biggr).
\end{align*}
Since either $J_1 \subseteq L_t$ or $J_1 \subseteq K_t$ must hold, $\mathbb E(Y_{1,t+1}/a_1-Y_{1,t}/a_1|\mathcal F_t)$ is equal to \begin{align*}
    &= \mathbbm 1_{J_1 \subseteq K_t}\mathbb E(Y_{1,t+1}-Y_{1,t}|\mathcal F_t) +\mathbbm 1_{J_1 \subseteq L_t}\mathbb E(Y_{1,t+1}-Y_{1,t}|\mathcal F_t)  
    \\&\leq \frac{1}{2}\biggl(-|{S^{(1)}_{t}}-{S'^{(1)}_{t}}|+p_1\tanh\biggl(\beta \Bigl|\sum_{j\neq1}{S^{(j)}_{t}}-\sum_{j\neq1}{S'^{(j)}_{t}}\Bigr|\biggr) \biggr)
    \\&\leq\frac{1}{2}\biggl(-|{S^{(1)}_{t}}-{S'^{(1)}_{t}}|+p_1\beta \sum_{j\neq1}\Bigl|{S^{(j)}_{t}}-{S'^{(j)}_{t}}\Bigl| \biggr).
\end{align*}
Thus, \[ \mathbb E(Y_{1,t+1}/a_1|\mathcal F_t) \leq (1-\frac{1}{n})Y_{1,t}/a_1 +\frac{\beta p_1}{n}\sum_{j\neq 1} Y_{j,t}/a_j.\]
Putting in the matrix form with $ \tilde {\mathbf Y}_t \colonequals (Y_{1,t}/a_1,\dots, Y_{m,t}/a_m)^T$, we have  \begin{align*}
    \mathbb E(Y_{tot,t+1}|\mathcal F_t)= \mathbf a^T \mathbb E(\tilde {\mathbf Y}_{t+1}|\mathcal F_t) \leq \mathbf a^T \mathbf A \tilde {\mathbf Y}_t  = g \mathbf a^T \tilde {\mathbf Y}_t = g Y_{tot,t}.
\end{align*}
Since $\beta<\beta_{cr}$ implies $g<1$ by \thref{upsilon}, the supermartingale property is established.

With the above coupling, by \thref{supermartingale}, for large $\gamma n$, \[\mathbb P_{\sigma,\sigma'}(\tau >t_n +\gamma n|\sigma_{t_n},\sigma'_{t_n}) \leq c' \frac{n\|(S^{(1)}_{t_n},\dots, S^{(m)}_{t_n})-(S'^{(1)}_{t_n},\dots,S'^{(m)}_{t_n})\|_1}{\sqrt{\gamma n}}\] for some $c' >0$ not depending on $n$.
Taking expectation, \[\mathbb P_{\sigma,\sigma'}(\tau >t_n +\gamma n) \leq O(\gamma ^{-1/2}).\]
Note $\sigma_{\tau}$ has at most $m$ more +1 spin sites than $\sigma_{\tau}'$, so $0\leq Y_{tot,\tau} \leq a_1 m$ by \thref{finalfinallemma}.
At $\tau$, construct a modified matching of $\sigma_{\tau}$ and $\sigma_{\tau}'$, and use the modified monotone coupling with respect to this modified matching from then on.
At $\tau_{mag}$, we construct another modified matching of the sites to do a new modified monotone coupling so that $({S^{(1)}_{t}},\dots, {S^{(m)}_{t}})=({S'^{(1)}_{t}},\dots,{S'^{(m)}_{t}})$ forever after $\tau_{mag}$.

By \thref{finalfinallemma}, a modified version of \thref{normcontraction}, and the strong Markov property, we have \begin{align*}
    \mathbb P_{\sigma,\sigma'}(\tau_{mag}>\tau +\gamma' n|\sigma_{\tau}, \sigma_{\tau}') &\leq \mathbb P_{\sigma,\sigma'}(Y_{tot,\tau +\gamma' n}\geq a_m |\sigma_{\tau}, \sigma_{\tau}')
    \\& \leq \mathbb E_{\sigma,\sigma'}[Y_{tot,\tau +\gamma' n}|\sigma_{\tau}, \sigma_{\tau}'] /a_m
    \\& \leq g^{\gamma'n} Y_{tot,\tau}/a_m \leq g^{\gamma'n} a_1m/a_m \leq e^{-\upsilon \gamma'}a_1m/a_m.
\end{align*}
Thus, \begin{align*}
\mathbb P_{\sigma,\sigma'}(\tau_{mag} > t_n + (\gamma+\gamma')n) &\leq O(\gamma^{-1/2})+ e^{-\upsilon \gamma'}a_1m/a_m,
\end{align*} and putting $\gamma=\gamma'$ yields \begin{align*}
\mathbb P_{\sigma,\sigma'}(\tau_{mag} > t_n + \gamma n) \leq O(\gamma^{-1/2}).
\end{align*}
\end{proof}

\begin{defi}[Good configurations]
Define the set of "good" configurations by \[\tilde \Omega \colonequals \{\sigma \in \Omega : |S^{(i)}(\sigma )|\leq p_i/2, \ i=1,\dots,m\}.\]
For $\sigma=(\sigma^{(1)}, \dots,\sigma^{(m)}) \in \tilde \Omega$ and each $i$, define \begin{align*}
&u_{i }^{\sigma} \colonequals |\{v\in J_i: \sigma^{(i)}(v)=1\}|, \enspace v_{i  }^{\sigma} \colonequals |\{v\in J_i: \sigma^{(i)}(v)=-1\}|.
\end{align*}
Define \[\tilde \Lambda \colonequals \{(u_1,v_1,u_2,v_2,\dots,u_m,v_m) \in \mathbb N^{2m} : {|J_i|}/{4} \leq u_i \wedge v_i,\ i=1,\dots,m\}.\]
\end{defi}
\begin{rmk}
Note that $\sigma \in \tilde\Omega \iff (u_1^{\sigma},v_1^{\sigma},\dots,u_m^{\sigma},v_m^{\sigma} ) \in \tilde \Lambda$.
In other words, $\tilde \Lambda$ is another representation of good configurations $\tilde \Omega$. 
We omit the starting state and write $u_i$ instead of $u_i^\sigma$ for convenience.
\end{rmk}

\begin{lem}[Lemma 3.3, \cite{levin}]\thlabel{goodstartingstatesdistance}
For any subset $A\subseteq \Omega$ and stationary distribution $\pi$,
\begin{align*}
    d_n(t_0+t)&=\max_{\sigma \in \Omega} \|\mathbb P_{\sigma }(\sigma_{t_0+t} \in \cdot) -\pi\|_{TV} 
    \\&\leq \max_{\sigma \in A} \|\mathbb P_{\sigma }({\sigma_{t}} \in \cdot) -\pi\|_{TV} +\max_{\sigma \in \Omega} \mathbb P_{\sigma}(\sigma_{t_0} \notin A).
\end{align*}
\end{lem}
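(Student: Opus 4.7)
The plan is a textbook Markov-chain decomposition based on the ``good set'' $A$, combined with the triangle inequality for total variation. I would fix an arbitrary $\sigma \in \Omega$, condition on the state $\sigma_{t_0}$, and apply the Markov property to write
\[
\mathbb P_\sigma(\sigma_{t_0+t}\in\cdot) = \sum_{\sigma'\in \Omega} \mathbb P_\sigma(\sigma_{t_0}=\sigma')\,\mathbb P_{\sigma'}(\sigma_t\in\cdot),
\]
which expresses the law at time $t_0+t$ as a convex combination of the laws at time $t$ started from the various possible values of $\sigma_{t_0}$.

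Since $\pi$ is stationary, it equals the same convex combination $\sum_{\sigma'} \mathbb P_\sigma(\sigma_{t_0}=\sigma')\,\pi$. Subtracting these two representations and using the convexity of the total variation norm, I would get
\[
\|\mathbb P_\sigma(\sigma_{t_0+t}\in\cdot)-\pi\|_{TV} \leq \sum_{\sigma'\in \Omega}\mathbb P_\sigma(\sigma_{t_0}=\sigma')\,\|\mathbb P_{\sigma'}(\sigma_t\in\cdot)-\pi\|_{TV}.
\]
Then I would split the sum according to whether $\sigma' \in A$ or not. For the $\sigma'\in A$ contribution, each summand is bounded by $\max_{\sigma'\in A}\|\mathbb P_{\sigma'}(\sigma_t\in\cdot)-\pi\|_{TV}$, and the remaining probability mass is at most $1$. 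For the $\sigma'\notin A$ contribution, the total variation factor is trivially bounded by $1$, so the whole piece is at most $\mathbb P_\sigma(\sigma_{t_0}\notin A)$. Summing these two estimates gives the desired bound; maximizing the left side over $\sigma \in \Omega$, and the second term on the right over $\sigma \in \Omega$ as well, yields exactly the stated inequality.

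There is essentially no technical obstacle here: the proof is purely a formal manipulation of the Markov property and the definition of total variation, and the only care needed is to use stationarity of $\pi$ to rewrite it as the same mixture so that the triangle inequality produces a clean per-state bound.
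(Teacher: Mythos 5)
Your argument is correct and is exactly the standard proof of this decomposition lemma; the paper itself states this result without proof (citing Lemma 3.3 of \cite{levin}), and the reference proves it precisely as you do: apply the Markov property to write $\mathbb P_\sigma(\sigma_{t_0+t}\in\cdot)$ as a mixture over the value of $\sigma_{t_0}$, use convexity of the total variation norm, and split the mixture at the event $\{\sigma_{t_0}\in A\}$, bounding the ``bad'' part by $1$.

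One small remark: you justify rewriting $\pi$ as the mixture $\sum_{\sigma'}\mathbb P_\sigma(\sigma_{t_0}=\sigma')\,\pi$ ``since $\pi$ is stationary,'' but in fact no stationarity is needed for that identity --- it holds for any fixed probability measure $\pi$ because the coefficients sum to $1$. Indeed, the entire lemma holds with $\pi$ replaced by an arbitrary reference measure; the stationarity hypothesis is present in the statement only because that is the case the paper uses. This does not affect the validity of your proof, but the attribution of the step to stationarity is inaccurate.
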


Recall that we are assuming the high temperature regime.
By \thref{expectedmagbound}, there exists $\delta >0$ such that $\max_{\sigma \in \Omega,1\leq i\leq m}|\mathbb E_{\sigma} S^{(i)}_{\delta n}| \leq  p_1 /4$.
Hence, by \thref{magvariancebound}, for large $n$,
\begin{align*} \mathbb P_{\sigma}(\sigma_{\delta n}\notin \tilde \Omega ) &\leq \sum_{ i=1}^m \mathbb P_{\sigma}(|S^{(i)}_{\delta n} | >p_i/2) \leq \sum_{i =1}^m \mathbb P_{\sigma}(|S^{(i)}_{\delta n}-\mathbb E_{\sigma}S^{(i)}_{\delta n} | >p_i/4)
\\&\leq \frac{16}{p_1 ^2} \sum_{i=1}^m \mathbb{V}\mathrm{ar}_{\sigma}S^{(i)}_{\delta n}= O(1/n).\end{align*}
Combining with \thref{goodstartingstatesdistance},
\begin{equation}
    d_n(\delta n+t) \leq \max_{\sigma \in \tilde \Omega} \|\mathbb P_{\sigma }({\sigma_{t}} \in \cdot) -\mu\|_{TV} +O(1/n). \label{distance}
\end{equation}

\begin{defi}[$2m$-coordinate chain]
Let $\tilde \sigma \in \Omega$ be a reference configuration.
For $\sigma \in \Omega$ and each $i$, define \begin{align*}
    &U_i(\sigma) \colonequals |\{v\in J_i: \sigma^{(i)}(v)=\tilde \sigma^{(i)}(v)=1\}|, 
    \\& V_i(\sigma) \colonequals |\{v\in J_i: \sigma^{(i)}(v)=\tilde \sigma^{(i)}(v)=-1\}|.
    \end{align*}
For a chain $({\sigma_{t}})$ with the starting configuration $ \sigma_0 \in \Omega$, define the \emph{$2m$-coordinate chain with respect to $\tilde \sigma$} by \begin{align*}
    \mathbf U_t \colonequals ({U^{(1)}_{t}},{V^{(1)}_{t}},\dots,{U^{(m)}_{t}},{V^{(m)}_{t}})\colonequals (U_{1}({\sigma_{t}}),V_{1}({\sigma_{t}}),\dots,U_{m}({\sigma_{t}}),V_{m}({\sigma_{t}})).
\end{align*}
It is easy to see that the $2m$-coordinate chain is again a Markov chain in its state space $\mathcal U \subseteq \mathbb N^{2m}$ and determines the magnetization chain $({S^{(1)}_{t}},\dots,{S^{(m)}_{t}})$ through the relation ${S^{(i)}_{t}}=2({U^{(i)}_{t}}-V^{(i)}_t)/n-( \tilde u_{i}- \tilde v_{i})/{n}$ for $i=1,\dots,m$.
\end{defi}

Symmetry gives us the following lemma which is an adaptation of Lemma 3.4 in \cite{levin}.
\begin{lem}\thlabel{totalvariationdistance}
Let $({\sigma_{t}})$ be a chain starting at $\sigma \in \Omega$.
Consider the corresponding $2m$-coordinate chain starting at $\mathbf u \in \mathcal U$.
Then \[\|\mathbb P_{\sigma}({\sigma_{t}} \in \cdot) - \mu\|_{TV}= \|\mathbb P_{\mathbf u  }(({U^{(1)}_{t}},{V^{(1)}_{t}},\dots, {U^{(m)}_{t}},{V^{(m)}_{t}})\in \cdot )-\nu\|_{TV}\]
where $\nu$ is the stationary distribution of the $2m$-coordinate chain.
\end{lem}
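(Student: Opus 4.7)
The plan is to read the claim as a statement that the projection $\phi\colon\Omega\to\mathcal U$, $\sigma^{*}\mapsto(U_1(\sigma^{*}),V_1(\sigma^{*}),\dots,U_m(\sigma^{*}),V_m(\sigma^{*}))$, is \emph{sufficient} in the sense that both $\mathbb P_\sigma(\sigma_t\in\cdot)$ and the Gibbs measure $\mu$ will be uniform on each fiber $\phi^{-1}(\mathbf u^{*})$. Once that is established, a short disintegration converts the $L^1$ distance on $\Omega$ to the $L^1$ distance on $\mathcal U$. As in the analogous Lemma~3.4 of \cite{levin}, the natural setting is $\tilde\sigma=\sigma$ (or more generally any $\tilde\sigma$ in the orbit of $\sigma$ under the group $G_J\colonequals\prod_i\operatorname{Sym}(J_i)$ of partition-preserving vertex relabellings), and I work in that setting.

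The dynamical symmetry is the key ingredient. Because the Glauber update rule $r_\pm$ depends only on the partition index of the chosen site and on the partition magnetizations, both of which are $G_J$-invariant, the chain is $G_J$-equivariant: if $\pi\in G_J$, then the chain started at $\sigma\circ\pi$ has the same law as $\sigma_t\circ\pi$ with $\sigma_t$ started at $\sigma$. Restricting to $\pi\in\operatorname{Stab}(\sigma)\cap G_J$ makes the law of $\sigma_t$ itself $\pi$-invariant. I then identify the orbits of this subgroup: two targets $\sigma^{*},\sigma^{**}$ lie in the same orbit iff for every $i$ and $s,s'\in\{\pm1\}$ the four per-partition counts $|J_i\cap\{\sigma=s\}\cap\{\sigma^{*}=s'\}|$ agree. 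When $\tilde\sigma=\sigma$, these four counts are encoded precisely by $(U_i(\sigma^{*}),V_i(\sigma^{*}))$ together with the fixed sizes $\tilde u_i,\tilde v_i$, so the orbits coincide exactly with the level sets of $\mathbf U$; consequently $\mathbb P_\sigma(\sigma_t=\sigma^{*})$ depends on $\sigma^{*}$ only through $\mathbf U(\sigma^{*})$.

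For the Gibbs measure, $\mu(\sigma^{*})\propto e^{-\beta H(\sigma^{*})}$, and $H$ is a function of the magnetization profile only; via the relation ${S^{(i)}}=2(U_i-V_i)/n-(\tilde u_i-\tilde v_i)/n$ recorded in the definition of the $2m$-coordinate chain, the magnetization profile is itself determined by $\mathbf U$. So $\mu$ too is constant on fibers of $\phi$. Writing $N(\mathbf u^{*})\colonequals|\phi^{-1}(\mathbf u^{*})|$ and noting that $\nu=\phi_*\mu$ by uniqueness of the stationary distribution of the projected Markov chain, each fiber satisfies
\begin{align*}
\mathbb P_\sigma(\sigma_t=\sigma^{*})=\frac{\mathbb P_{\mathbf u}(\mathbf U_t=\mathbf u^{*})}{N(\mathbf u^{*})},\qquad \mu(\sigma^{*})=\frac{\nu(\mathbf u^{*})}{N(\mathbf u^{*})}.
\end{align*}
Summing $|\mathbb P_\sigma(\sigma_t=\sigma^{*})-\mu(\sigma^{*})|$ over $\sigma^{*}\in\phi^{-1}(\mathbf u^{*})$ cancels the factor $N(\mathbf u^{*})$, and a second sum over $\mathbf u^{*}$ then produces the stated equality of total variation distances after taking the common factor of $1/2$.

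The main obstacle is the orbit identification step: given two targets sharing the same $\mathbf U$-value, one must exhibit an element of $\operatorname{Stab}(\sigma)\cap G_J$ sending one to the other. This is a bookkeeping exercise in matching the four-cell counts, and it is precisely the step where compatibility of $\tilde\sigma$ with $\sigma$ is used. The $G_J$-equivariance of Glauber dynamics, the identification $\nu=\phi_*\mu$, and the fiber-wise disintegration that completes the proof are routine by comparison.
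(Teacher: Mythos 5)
Your proof is correct and follows the same route as the paper's: show that both the time-$t$ law of the chain and the Gibbs measure are uniform on the fibers of the projection to the $2m$-coordinate chain, then disintegrate. What you add is a precise justification of the step the paper dismisses with the words ``by symmetry'': you identify the group $\operatorname{Stab}(\sigma)\cap G_J$ of partition-preserving relabellings fixing $\sigma$, observe that $G_J$-equivariance of the Glauber kernel makes the law of $\sigma_t$ invariant under it, and note that the orbits of this group coincide with the fibers of the $2m$-coordinate map exactly when the reference $\tilde\sigma$ lies in the $G_J$-orbit of the starting configuration $\sigma$. That hypothesis is genuinely needed: for a mismatched pair $(\sigma,\tilde\sigma)$ a single fiber can meet several orbits, the conditional law of $\sigma_t$ is then no longer uniform on the fiber, and the stated equality can fail, with only the inequality $\geq$ (valid for any projection) surviving. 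The paper keeps this assumption implicit, but it is indeed how the lemma is invoked in the upper bound argument, where the chain is started from the good configuration $\tilde\sigma$ and the $2m$-coordinates are taken relative to that same $\tilde\sigma$. Finally, your fiber-wise cancellation of $N(\mathbf u^{*})$ yields the equality in one pass, whereas the paper derives one inequality from the triangle inequality and the reverse inequality from the projection property; the two computations are equivalent.
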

\begin{proof}
Since $\mu(\sigma)=e^{\beta n\sum_{i\neq j}S^{(i)}(\sigma) S^{(j)}(\sigma)}/Z(\beta)$, given the $2m$-coordinate $\mathbf u' \in \mathcal U$, the conditional $\mu$-probability of the configurations is equiprobable.
In other words, $\mu(\cdot | \Omega(\mathbf u'))$ is uniform where $\Omega(\mathbf u')$ is the set of configurations having the $2m$-coordinate $\mathbf u'$.
Also, by symmetry, \[\mathbb P_{\sigma}({\sigma_{t}} \in \cdot \ |\mathbf U_t = \mathbf u')\] is uniform over $\Omega(\mathbf u')$.
Thus, \begin{align*}
&\mathbb P_\sigma ({\sigma_{t}} = \eta) -\mu(\eta) 
=\sum_{\mathbf u' \in \mathcal U}\frac{\mathbbm 1\{\eta \in \Omega(\mathbf u')\}}{|\Omega(\mathbf u')|}\left(\mathbb P_{\mathbf u'}\left(\mathbf U_t = \mathbf u'\right)-\mu (\Omega(\mathbf u'))\right).
\end{align*}
Taking absolute values, applying the triangular inequality, summing over $\eta$, and changing the order of summation shows \[\|\mathbb P_{\sigma}({\sigma_{t}} \in \cdot) - \mu\|_{TV}\leq  \|\mathbb P_{\mathbf u  }(({U^{(1)}_{t}},{V^{(1)}_{t}},\dots, {U^{(m)}_{t}},{V^{(m)}_{t}})\in \cdot )-\nu\|_{TV}.\]
The reverse inequality holds since the $2m$-coordinate chain is a function of the original chain $({\sigma_{t}})$.
\end{proof}
\begin{rmk}
This lemma lets us look at the $2m$-coordinate chain instead of the original chain when considering the total variation distance.
\end{rmk}

Fix a good configuration $\tilde \sigma \in \tilde \Omega$.
Recall $\tau_{mag}$ defined in \thref{magcoupling}.
We use the following coupling after $\tau_{mag}$, which is a generalization of Lemma 3.5 of \cite{levin}.
\begin{lem}[Post magnetization coupling]\thlabel{postmagcoupling}
Let $\tilde \sigma \in \tilde \Omega$ be a good configuration. 
Suppose that two configurations $\sigma_0, \sigma_0'$ satisfy $S^{(i)}(\sigma_0)=S^{(i)}(\sigma_0')$ for $i=1,\dots,m$.
With respect to the good configuration $\tilde \sigma$, define \begin{align*}
\Theta_i \colonequals \Big\{\sigma \in \Omega : \min \{U _i(\sigma), \tilde u_i-U  _i(\sigma), V_i(\sigma),  \tilde v_i-V_i(\sigma)\} \geq \frac{|J_i|}{16}\Big\},\enspace \Theta \colonequals \bigcap _{ i=1}^m\Theta_i \end{align*} for each $i$.
Then there exists a coupling $({\sigma_{t}}, {\sigma'_{t}})$ of the Glauber dynamics with starting states $(\sigma_0,\sigma_0')$ satisfying: \begin{align*}
    &\mathrm {(i)} \; \mathbf S_{t}=\mathbf S_{t}'\ \text{for all}\ t\geq0
    \\&\mathrm {(ii)} \; \text{If ${R^{(i)}_{t}}\colonequals {U'^{(i)}_{t}}-{U^{(i)}_{t}}$, then }
    \mathbb E_{\sigma_0,\sigma_0'}\left({R^{(i)}_{t+1}}-{R^{(i)}_{t}}| {\sigma_{t}}, {\sigma'_{t}} \right) = \frac{-{R^{(i)}_{t}}}{n}, \\&\qquad i=1,\dots,m
    \\&\mathrm {(iii)} \;  \text{There exists $c>0$ not depending on $n$ such that on the event}\  \{{\sigma_{t}},{\sigma'_{t}} \in \Theta\},\\& \qquad \mathbb P_{\sigma_0,\sigma_0'}\left({R^{(i)}_{t+1}}-{R^{(i)}_{t}} \neq 0| {\sigma_{t}}, {\sigma'_{t}} \right) \geq c>0 \; \text{for all} \ i=1,\dots,m.
\end{align*}
\end{lem}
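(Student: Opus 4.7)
The plan is to construct the coupling by defining a time-dependent matching $f_t \colon V \to V$ of sites between the two chains, then verify the three properties by direct enumeration of pair types.

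\emph{Matching.} For each partition $J_k$ at time $t$, classify its sites by the pair $(\tilde\sigma, \sigma_t)$ into four types $\alpha, \beta, \gamma, \delta$ corresponding to $(+,+), (+,-), (-,+), (-,-)$, and similarly classify by $(\tilde\sigma, \sigma_t')$ into primed types. Since $\mathbf S_t = \mathbf S_t'$, the counts satisfy $|\alpha'| - |\alpha| = |\delta'| - |\delta| = R^{(k)}_t$ and $|\beta'| - |\beta| = |\gamma'| - |\gamma| = -R^{(k)}_t$. Assume WLOG $R^{(k)}_t \ge 0$ and define $f_t$ to restrict to a bijection $J_k \to J_k$ by greedily pairing same-type sites ($\alpha \leftrightarrow \alpha'$, $\beta' \leftrightarrow \beta$, $\gamma' \leftrightarrow \gamma$, $\delta \leftrightarrow \delta'$), then pairing the remaining $R^{(k)}_t$ sites of type $\beta$ in $\sigma_t$ with the $R^{(k)}_t$ remaining $\delta'$-sites in $\sigma_t'$, and similarly the leftover $\gamma$-sites with the leftover $\alpha'$-sites. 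By design, every pair $(I, f_t(I))$ satisfies $\sigma_t(I) = \sigma_t'(f_t(I))$.

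\emph{Coupling step.} Pick $I$ uniformly on $V$ and an independent $U$ uniform on $[0,1]$; perform a Glauber update on $\sigma_t$ at site $I$ and on $\sigma_t'$ at site $f_t(I)$, both using coin $U$. Since $\mathbf S_t = \mathbf S_t'$ the mean fields at $I$ and $f_t(I)$ (both in the same $J_k$) agree, so the shared coin yields the same new spin at both sites; combined with $\sigma_t(I) = \sigma_t'(f_t(I))$, $S^{(k)}$ changes identically in the two chains, proving (i) by induction. The marginal laws are standard Glauber because $f_t$ is a bijection on $V$ (so $f_t(I)$ is uniform on $V$) and $U$ is independent.

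\emph{Drift and fluctuation.} A direct case check shows matched pairs leave $R^{(k)}$ unchanged. An unmatched $(\beta, \delta')$ pair has both sites at $-1$, and with probability $r_+$ both flip to $+1$: the $\sigma$-site moves $\beta \to \alpha$ (so $U^{(k)}$ increases by $1$), the $\sigma'$-site moves $\delta' \to \gamma'$ (so $V'^{(k)}$ decreases by $1$), and $R^{(k)}$ drops by $1$. Unmatched $(\gamma, \alpha')$ pairs contribute symmetrically with probability $r_-$. Summing over the $R^{(k)}_t$ unmatched sites of each type and using $r_+ + r_- = 1$,
\[
\mathbb E\bigl(R^{(k)}_{t+1} - R^{(k)}_t \mid \mathcal F_t\bigr) = -\frac{R^{(k)}_t}{n},
\]
giving (ii); the case $R^{(k)}_t < 0$ is symmetric. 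For (iii), on $\Theta$ the mean field is bounded in absolute value by $\sum_{j \neq i} p_j < 1$, so $r_+$ and $r_-$ are both bounded away from $0$, and the $|J_i|/16$-constraints in the definition of $\Theta$ ensure availability of the required site types; this yields a lower bound of constant order times $|R^{(i)}_t|/n$ on the probability of an $R^{(i)}$-changing update. The main technical obstacle is twofold: carefully verifying that $f_t$ (which depends on $(\sigma_t, \sigma_t')$) defines a valid coupling with correct marginals while preserving magnetizations pathwise; and the interpretation of (iii), since $R^{(i)}_t = 0$ would force $\Delta R^{(i)} = 0$ almost surely under this coupling, so the intended meaning of (iii) must incorporate the $|R^{(i)}_t|/n$ factor extracted above.
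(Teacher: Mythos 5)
Your deterministic matching $f_t$ is a genuinely different coupling from the paper's, and it fails property (iii) — a failure you notice yourself but try to explain away, when in fact it is fatal. The paper's coupling chooses $I'$ \emph{uniformly at random} from $\{v \in J_i' : \sigma_t'(v) = \sigma_t(I)\}$, i.e., among all sites in the primed chain with matching current spin, with no regard to the reference spin $\tilde\sigma$. That extra randomness is the whole point: it lets the update pick, say, $I\in A_1(\sigma_t)$ (current $+$, reference $+$) while $I'\in C_1(\sigma'_t)$ (current $+$, reference $-$), a cross-type pair that shifts $R^{(1)}$ even when $R^{(1)}_t = 0$. On $\Theta$ each of the four reference-spin cells has at least $|J_i|/16$ and at most $11|J_i|/16$ sites, so the probability of such a cross-type choice followed by a flip is bounded below by $p_i/352$, uniformly in $n$ and in $R^{(i)}_t$. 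Your $f_t$ deliberately matches same-type sites first and only cross-pairs the $|R^{(i)}_t|$ "excess" sites, so a cross-type update has probability at most of order $|R^{(i)}_t|/n$. When $R^{(i)}_t = 0$ this is identically $0$, and when $R^{(i)}_t = 1$ it is $O(1/n)$; neither is bounded below by an $n$-independent constant.

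This is not a matter of how to "interpret" (iii): the constant lower bound is what feeds \thref{supermartingale} in the proof of \thref{upperbound}, where the variance of $R^{(i)}_t$ must be bounded below by a fixed $\sigma^2>0$ on $\{\tau_{i,c}>t\}$ (including states with $R^{(i)}_t=1$). With your coupling the variance on such states is $O(1/n)$, the $\sigma$ in that lemma degenerates, and the resulting tail bound $4k/(\sigma\sqrt u)$ blows up — the whole $O(n)$ window for the coordinate-coupling phase is lost. Your verifications of (i) and (ii), and your remark that $f_t$ being a bijection gives correct marginals, are fine; the missing idea is precisely the randomization of $I'$ over same-spin sites, which decouples the lower bound in (iii) from $|R^{(i)}_t|$.
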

\begin{proof}
We inductively define the coupling.
The random spin $S$ determined by the randomness $I$ and $U$ is \begin{align*}
    S=\sum_{i=1}^m (\mathbbm 1_{I\in J_i,\; U\leq r_+(\sum_{j\neq i}{S^{(j)}_{t}})}-\mathbbm1_{I\in J_i,\; U > r_+(\sum_{j\neq i}{S^{(j)}_{t}})}).
\end{align*}
Suppose that $({\sigma_{t}}, {\sigma'_{t}})$ is given such that the statements hold for some $t\geq0$.
Let $\sigma_{t+1}$ be determined $I$ and $U$.
If $I\in J_i$ for some $i$, then choose $I'$ randomly from $\{v \in J_i' : {\sigma'_{t}}(v) = {\sigma_{t}}(I)\}$.
Update the primed chain by
\[\sigma_{t+1}'(v)=  \begin{cases}  {\sigma'_{t}}(v) & \text{if $v \neq I'$} \\ S  & \text{if $v = I'$} \end{cases} \quad . \]
By the induction hypothesis $\mathbf S_t=\mathbf S_t'$, we have $\{v \in J_i' : {\sigma'_{t}}(v) = {\sigma_{t}}(I)\} \neq \emptyset$ and $({\sigma'_{t}})$ satisfies the Glauber dynamics.
Also, $\mathbf S_{t+1}=\mathbf S_{t+1}'$ with this coupling.

For $i=1,\dots,m$, put \begin{align*}
    &A_i(\sigma) \colonequals \{v \in J_i : \sigma(v)=\tilde \sigma(v)=1\},
    \\&B_i(\sigma) \colonequals \{v \in J_i : \sigma(v)=-1, \ \tilde\sigma(v)=1\},
    \\&C_i(\sigma) \colonequals \{v \in J_i : \sigma(v)=1, \ \tilde\sigma(v)=-1\},
    \\&D_i(\sigma) \colonequals \{v \in J_i : \sigma(v)=\tilde\sigma(v)=-1\},
\end{align*}
so $|A_i(\sigma )|=U_i(\sigma)$, $|B_i(\sigma )|=\tilde u_i-U_i(\sigma)$, $|C_i(\sigma )|=\tilde v_i- V_i(\sigma)$, and $|D_i(\sigma )|=V_i(\sigma)$.

Now we calculate ${R^{(1)}_{t+1}}-{R^{(i)}_{t}}$ with the above coupling.
The following table shows the one-step dynamics of ${R^{(1)}_{t}}$.
\begin{center}
 \begin{tabular}{|c| c| c|c|} 
 \hline
 $I$ & $I'$ & $S$ & ${R^{(1)}_{t+1}}-{R^{(1)}_{t}}$ \\  
 \hline\hline
 $B_{1}({\sigma_{t}})$ & $D_{1}({\sigma'_{t}})$ & 1 & -1  \\ 
 \hline
 $C_{1}({\sigma_{t}})$ & $A_{1}({\sigma'_{t}})$ & -1 & -1 \\
 \hline
 $A_{1}({\sigma_{t}})$ & $C_{1}({\sigma'_{t}})$ & -1 & 1  \\
 \hline
 $D_{1}({\sigma_{t}})$ & $B_{1}({\sigma'_{t}})$ & 1 & 1  \\
 \hline
 otherwise &otherwise&otherwise&0\\
 \hline
\end{tabular}
\end{center}
Since ${S^{(1)}_{t}}={S'^{(1)}_{t}}$ implies ${R^{(1)}_{t}}\equiv {U'^{(1)}_{t}}-{U^{(1)}_{t}}={V'^{(1)}_{t}}-{V^{(1)}_{t}}$, \begin{align*}
    &\mathbb P_{\sigma_0,\sigma_0'}({R^{(1)}_{t+1}}-{R^{(1)}_{t}}=-1|{\sigma_{t}}, {\sigma'_{t}})\equalscolon a({U^{(1)}_{t}},{V^{(1)}_{t}},U_{2,t},V_{2,t})
    \\&= \frac{\tilde u_1 -{U^{(1)}_{t}}}{n}\frac{{V'^{(1)}_{t}}}{\tilde u_1-{U'^{(1)}_{t}}+{V'^{(1)}_{t}}}r_+(\sum_{j\neq 1}{S^{(j)}_{t}})
    +\frac{\tilde v_1 -{V^{(1)}_{t}}}{n}\frac{{U'^{(1)}_{t}}}{\tilde v_1-{V'^{(1)}_{t}}+{U'^{(1)}_{t}}}r_-(\sum_{j\neq 1}{S^{(j)}_{t}})
    \\&=\frac{\tilde u_1-{U^{(1)}_{t}}}{n}\frac{{V^{(1)}_{t}}+{R^{(1)}_{t}}}{\tilde u_1-{U^{(1)}_{t}}+{V^{(1)}_{t}}}r_+(\sum_{j\neq 1}{S^{(j)}_{t}})
    +\frac{\tilde v_1-{V^{(1)}_{t}}}{n}\frac{{U^{(1)}_{t}}+{R^{(1)}_{t}}}{\tilde v_1-{V^{(1)}_{t}}+{U^{(1)}_{t}}}r_-(\sum_{j\neq 1}{S^{(j)}_{t}}).
\end{align*}
Likewise, \begin{align*}
    &\mathbb P_{\sigma_0,\sigma_0'}({R^{(1)}_{t+1}}-{R^{(1)}_{t}}=1|{\sigma_{t}}, {\sigma'_{t}})\equalscolon b({U^{(1)}_{t}},{V^{(1)}_{t}},U_{2,t},V_{2,t})
    \\&= \frac{{U^{(1)}_{t}}}{n}\frac{\tilde v_1 - {V'^{(1)}_{t}}}{{U'^{(1)}_{t}}+\tilde v_1-{V'^{(1)}_{t}}}r_-(\sum_{j\neq 1}{S^{(j)}_{t}})
    +\frac{{V^{(1)}_{t}}}{n}\frac{\tilde u_1 - {U'^{(1)}_{t}}}{\tilde u_1-{U'^{(1)}_{t}}+{V'^{(1)}_{t}}}r_+(\sum_{j\neq 1}{S^{(j)}_{t}})
    \\&= \frac{{U^{(1)}_{t}}}{n}\frac{\tilde v_1 - ({V^{(1)}_{t}}+{R^{(1)}_{t}})}{{U^{(1)}_{t}}+\tilde v_1-{V^{(1)}_{t}}}r_-(\sum_{j\neq 1}{S^{(j)}_{t}})
    +\frac{{V^{(1)}_{t}}}{n}\frac{\tilde u_1 - ({U^{(1)}_{t}}+{R^{(1)}_{t}})}{\tilde u_1-{U^{(1)}_{t}}+{V^{(1)}_{t}}}r_+(\sum_{j\neq 1}{S^{(j)}_{t}}).
\end{align*}
Thus, by a direct calculation, \begin{align*}
&\mathbb E_{\sigma_0,\sigma_0'}({R^{(1)}_{t+1}}-{R^{(1)}_{t}}|{\sigma_{t}},{\sigma'_{t}})=b-a
\\&=\frac{-{R^{(1)}_{t}}}{n}\biggl(r_+(\sum_{j\neq 1}{S^{(j)}_{t}})+r_-(\sum_{j\neq 1}{S^{(j)}_{t}})\biggr)= \frac{-{R^{(1)}_{t}}}{n}.
\end{align*}

Moreover, on the event $\{{\sigma_{t}},{\sigma'_{t}} \in \Theta\}$, $(\tilde u_1,\tilde v_1,\dots,\tilde u_m,\tilde v_m) \in \tilde \Lambda$ implies ${U^{(1)}_{t}}\leq \tilde u_1 -|J_1|/16 \leq 3|J_1|/4 -|J_1|/16 = 11|J_1|/16$, and $\tilde u_1 -{U^{(1)}_{t}} \leq 3|J_1|/4 -|J_1|/16 = 11|J_1|/16$.
The same upper bound holds for $\tilde v_1-{V^{(1)}_{t}}$ and ${V^{(1)}_{t}}$.
Thus, on the event $\{{\sigma_{t}},{\sigma'_{t}} \in \Theta\}$, \begin{align*}
\mathbb P_{\sigma_0,\sigma_0'}({R^{(1)}_{t+1}}-{R^{(1)}_{t}} \neq 0|{\sigma_{t}},{\sigma'_{t}}) &\geq b\geq \frac{p_1}{16}\frac{\frac{1}{16}r_-(\sum_{j\neq 1}{S^{(j)}_{t}})}{\frac{11}{16}+\frac{11}{16}}+\frac{p_1}{16}\frac{\frac{1}{16}r_+(\sum_{j\neq 1}{S^{(j)}_{t}})}{\frac{11}{16}+\frac{11}{16}}
\\ &=\frac{p_1}{352}. \end{align*}
Similarly, for $i>1$, $\mathbb P_{\sigma_0,\sigma_0'}({R^{(i)}_{t+1}}-{R^{(i)}_{t}} \neq 0|{\sigma_{t}},{\sigma'_{t}}) \geq {p_i}/{352} \geq {p_1}/{352}>0$, which concludes the induction.
\end{proof}

\section{Upper and Lower Bounds in the high temperature regime}\label{section5}
\subsection{Upper Bound}
\begin{thm}\thlabel{upperbound}
For $\beta<\beta_{cr}$, we have \[\lim_{\gamma \to \infty }\limsup_{n \to \infty}d_n(t_n +\gamma n)=0.\]
\end{thm}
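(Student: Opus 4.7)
The plan is to combine the good-start reduction (\ref{distance}) with the two couplings of Section~\ref{section4} and the conversion to the $2m$-coordinate chain provided by \thref{totalvariationdistance}. By (\ref{distance}), it suffices to bound $\max_{\sigma\in\tilde\Omega}\|\mathbb P_\sigma(\sigma_t\in\cdot)-\mu\|_{TV}$ for $t=t_n+\gamma n$. For such $\sigma$, introduce a coupled pair $(\sigma_t,\sigma'_t)$ with $\sigma'_0\sim\mu$, so that $\sigma'_t\sim\mu$ for every $t$. By Chebyshev's inequality, \thref{zerospin}, and the stationary variance bound of \thref{magvariancebound}, $\mathbb P(\sigma'_0\notin\tilde\Omega)=O(1/n)$, and the coupling inequality reduces the problem to estimating $\mathbb P(\sigma_t\ne\sigma'_t)$ with both chains starting in $\tilde\Omega$.

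Phase 1 uses the magnetization coupling of \thref{magcoupling}, which with probability at least $1-O(\gamma^{-1/2})$ produces a stopping time $\tau_{mag}\leq t_n+(\gamma/2)n$ at which $\mathbf S_{\tau_{mag}}=\mathbf S'_{\tau_{mag}}$. Because $\tau_{mag}\geq t_n$, \thref{expectedmagbound} applied to the stationary chain $\sigma'$ shows $\sigma'_{\tau_{mag}}\in\tilde\Omega$ with probability $1-O(1/n)$. Designate $\tilde\sigma:=\sigma'_{\tau_{mag}}$ as the reference configuration and switch to the post-magnetization coupling of \thref{postmagcoupling}. By \thref{totalvariationdistance}, $\|\mathbb P_\sigma(\sigma_t\in\cdot)-\mu\|_{TV}=\|\mathbb P_{\mathbf u}(\mathbf U_t\in\cdot)-\nu\|_{TV}$, and the coupling inequality bounds this by $\mathbb P(\mathbf U_t\ne\mathbf U'_t)=\mathbb P(\exists i : R^{(i)}_t\ne 0)$, so the remaining goal is to force every $R^{(i)}$ to vanish within another $(\gamma/2)n$ steps.

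Each $R^{(i)}_t$ is an integer-valued mean-reverting process with $\mathbb E(R^{(i)}_{t+1}\mid\mathcal F_t)=(1-1/n)R^{(i)}_t$, unit increments, and uniform non-laziness on $\{\sigma_t,\sigma'_t\in\Theta\}$ by properties~(ii) and~(iii) of \thref{postmagcoupling}. The initial magnitude satisfies $\mathbb E|R^{(i)}_{\tau_{mag}}|=O(\sqrt n)$: applying \thref{expectedmagbound} to the fixed subsets $A_i(\tilde\sigma),\,B_i(\tilde\sigma)\subset J_i$ yields $\mathbb E|U_i(\sigma_{\tau_{mag}})-\tilde u_i/2|=O(\sqrt n)$, and likewise for $\sigma'$. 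The variance bound in the same proposition, combined with Chebyshev and a union bound, then shows that $\{\sigma_t,\sigma'_t\in\Theta\}$ holds throughout the window with probability $1-o(1)$. On that good event, a standard hitting-time estimate for the discrete Ornstein--Uhlenbeck walk $R^{(i)}$ (an application of \thref{supermartingale} to a suitable increasing function of $|R^{(i)}_t|$, in the spirit of the analogous argument in \cite{levin}) gives $\mathbb P(\tau_0^{(i)}>t_n+\gamma n)=O(\gamma^{-1/2})$, where $\tau_0^{(i)}$ is the first time $R^{(i)}$ hits zero.

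The main difficulty is the \emph{simultaneity} of the vanishing: $R^{(i)}_t=0$ must hold for every $i$ at the same time $t$, which is not automatic because the post-magnetization coupling will generically drive $R^{(i)}$ away from zero once it lands there. I would resolve this by modifying the coupling at each $\tau_0^{(i)}$, switching to a refreshed modified matching on $J_i$ that preserves $R^{(i)}=0$ while the other coordinates continue their individual mean-reverting dynamics, mirroring the device used in Lemma~3.5 of \cite{levin}. Assembling the three pieces---the $O(\gamma^{-1/2})$ error in Phase~1, the $O(1/n)$ error in confining both chains to $\Theta$, and the $O(\gamma^{-1/2})$ error in the Phase~2 hitting-time estimate, summed over the finitely many coordinates $i$---yields a bound that tends to zero upon sending $n\to\infty$ and then $\gamma\to\infty$, completing the proof.
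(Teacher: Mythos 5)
Your proposal follows the same architecture as the paper's proof — the good-start reduction via (\ref{distance}), the translation to the $2m$-coordinate chain through \thref{totalvariationdistance}, a magnetization-coupling Phase 1, a post-magnetization Phase 2 governed by \thref{postmagcoupling} and closed with \thref{supermartingale}, and the correct observation that once a given $R^{(i)}$ hits zero the coupling must be refreshed on $J_i$ to pin it there. That last point about simultaneity is exactly the device the paper uses: on $H_M$, once $\tau_{i,c}$ occurs one switches to the modified monotone update on $J_i$, so $\tau_{tot,c}=\max_i\tau_{i,c}$.

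The real gap is the choice of reference configuration. You set $\tilde\sigma:=\sigma'_{\tau_{mag}}$, a random object determined at a stopping time, and then invoke \thref{totalvariationdistance}, \thref{postmagcoupling}, and \thref{expectedmagbound} as though $\tilde\sigma$ were fixed. None of those lemmas are stated for a data-dependent reference: the total-variation identity requires the $2m$-coordinate chain to be a genuine Markov chain with a deterministic reference from time~$0$, the post-magnetization coupling is built around a fixed $\tilde u_i,\tilde v_i\in\tilde\Lambda$, and \thref{expectedmagbound} controls $M_t(B)$ for fixed subsets $B$ and deterministic times $t$. Using it to conclude $\sigma'_{\tau_{mag}}\in\tilde\Omega$ is also invalid: although $\sigma'_t\sim\mu$ for fixed $t$, the marginal of $\sigma'_{\tau_{mag}}$ at the stopping time $\tau_{mag}$ is not $\mu$. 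The paper sidesteps all of this by fixing the reference to be the good \emph{starting} configuration $\tilde\sigma\in\tilde\Omega$ of the first chain (so $\tilde{\mathbf u}\in\tilde\Lambda$ automatically), by taking the second chain's start to be worst-case rather than stationary, and by beginning Phase~2 at the deterministic time $t_n(\gamma)$ rather than at $\tau_{mag}$, so that every application of \thref{expectedmagbound} is at a fixed time and to a fixed subset $\tilde A_i$.

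A second, smaller gap: keeping both chains inside $\Theta$ throughout the length-$\gamma n$ window is not achieved by "Chebyshev and a union bound." A naive union bound gives $\gamma n\cdot O(1/n)=O(\gamma)$, which does not vanish. The paper uses that $M_t(\tilde A_i)$ has increments in $\{-1,0,1\}$: reaching $|J_i|/8$ forces $|M_t|\geq|J_i|/16$ on at least $|J_i|/16$ distinct times, so Markov's inequality applied to the occupation count $Y$ yields the needed $\gamma\,O(1/n)$.
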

\begin{proof}
Let $\nu$ be the stationary measure for the $2m$-coordinate chain.
For any $A\subseteq \mathcal U$, \begin{align*}
    |\mathbb P_{\mathbf u}(\mathbf U_t \in A)-\nu(A)| &= \Big|\sum_{\mathbf u' \in \mathcal U} \nu(\mathbf u') \left(\mathbb P_{\mathbf u}(\mathbf U_t \in A)-\mathbb P_{\mathbf u'}(\mathbf U_t' \in A)\right) \Big| 
    \\ &\leq \sum_{\mathbf u' \in \mathcal U} \nu(\mathbf u')  \|\mathbb P_{\mathbf u}(\mathbf U_t \in \cdot)-\mathbb P_{\mathbf u'}(\mathbf U_t' \in \cdot)\|_{TV}
    \\ &\leq \max_{\mathbf u' \in \mathcal U}\|\mathbb P_{\mathbf u}(\mathbf U_t \in \cdot)-\mathbb P_{\mathbf u'}(\mathbf U_t' \in \cdot)\|_{TV}. 
\end{align*}
Thus, taking supremum over $A\subseteq \mathcal U$ and $\mathbf u\in \tilde \Lambda$,
\begin{align*}
    \max_{\mathbf{u}\in \tilde \Lambda}\|\mathbb P_{\mathbf u}\left(\mathbf U_t\in \cdot \right)-\nu\|_{TV} \leq \max_{\substack{\mathbf{u}\in \tilde \Lambda,\\ \mathbf{u}' \in \mathcal U}}\| \mathbb P_{\mathbf{u}}\left(\mathbf U_t\in \cdot \right)-\mathbb P_{\mathbf{u}'}\left(\mathbf U_t'\in \cdot \right)\|_{TV}.
\end{align*}
Also, from inequality \eqref{distance} and \thref{totalvariationdistance}, \begin{align*}
    d_n(\delta n+t) &\leq \max_{\sigma \in \tilde \Omega} \|\mathbb P_{\sigma }({\sigma_{t}} \in \cdot) -\mu\| +O(1/n) 
    \\&= \max_{\mathbf{u} \in \tilde \Lambda }\|\mathbb P_{\mathbf{u}}(\mathbf U_t\in \cdot )-\nu\|_{TV} +O(1/n).
\end{align*}
For $2m$-coordinate chains $\mathbf U_t$ and $\mathbf U_t'$ with respect to a fixed $\tilde \sigma\in \tilde \Omega$ starting at $\mathbf u \in \mathcal U$ and $\mathbf u'\in \mathcal U$, respectively, put \[\tau_{tot,c} \colonequals \min\{t \geq 0 :\mathbf U_t= \mathbf U_t'\}.\]
It is a standard fact \parencite[Section 5.2]{Peres} that \begin{align*}
    \|\mathbb P_{\mathbf u}(\mathbf U_t\in \cdot )-\mathbb P_{\mathbf u'}(\mathbf U_t'\in \cdot )\|_{TV} 
     \leq \mathbb P_{\mathbf u,\mathbf u'}(\tau_{tot,c} >t).
\end{align*}
Combining all the above results, it suffices to bound \[
\max_{\substack{\mathbf{u}\in \tilde \Lambda,\\ \mathbf{u}' \in \mathcal U}}\mathbb P_{\mathbf u,\mathbf u'}(\tau_{tot,c} >t).\]

With the above considerations, fix a good starting configuration $\tilde \sigma \in \tilde \Omega$ with the associated $2m$-coordinates $ \tilde {\mathbf  u}=(\tilde u_1,\tilde v_1,\dots, \tilde u_m,\tilde v_m) \in \tilde \Lambda$ and an arbitrary starting configuration $\sigma' \in \Omega$.
Put \[t_n(\gamma) \colonequals t_n +\gamma n , \enspace H_M \colonequals \{\tau_{mag}\leq t_n(\gamma)\}.\]

The first step is the magnetization coupling phase.
By \thref{magcoupling}, there exists a coupling $({\sigma_{t}}, {\sigma'_{t}})$ for $t\leq t_n(\gamma)$ with starting configurations $(\tilde \sigma, \sigma')$ such that \[\mathbb P_{\tilde \sigma,\sigma'}(H_M^c) \leq O(1/ \sqrt{\gamma}).\]
The next step is the $2m$-coordinate chain coupling phase.
For $i=1,\dots,m$, define \begin{align*}
    &\tau_{i,c} \colonequals \min\{t \geq 0 : ({U^{(i)}_{t}},V^{(i)}_t)=({U'^{(i)}_{t}},V'^{(i)}_t)\},
    \\ &\Theta_i \colonequals \Big\{\sigma \in \Omega : \min \{U _i(\sigma), \tilde u_i-U  _i(\sigma), V_i(\sigma),  \tilde v_i-V_i(\sigma)\} \geq \frac{|J_i|}{16}\Big\},
    \\ &H_{i}(t) \colonequals \{{\sigma^{(i)}_t}, {\sigma'^{(i)}_t} \in \Theta_i \},
    \quad H_i \colonequals \bigcap_{t\in [t_n(\gamma), t_n(2\gamma)]}H_i (t), \quad H_{tot} \colonequals \bigcap_{i=1}^mH_i.
\end{align*}
We have defined the two coordinate chains with respect to $\tilde \sigma$.
On the event $H_M$, for $t \geq t_n(\gamma)$, we use the coupling in \thref{postmagcoupling}, while on the event $H_M ^c$, we let the chains run independently for $t \geq t_n(\gamma)$ since we do not care about this un-probable event.

Our first claim is that
\begin{align*}
    \mathbb P_{\tilde\sigma,\sigma'}(H_i^c) \leq \gamma O(1/n), \enspace i=1,\dots,m.
\end{align*}
To that end, observe that    
\begin{equation*}
    \begin{split}
        \{{\sigma^{(i)}_t}\notin \Theta_i\} \subseteq  \{{U^{(i)}_{t}}<|J_i|/16\} &\cup \{\tilde u_i-{U^{(i)}_{t}}<|J_i|/16\} 
        \\& \cup \{V^{(i)}_t<|J_i|/16\}\cup \{\tilde v_i-V^{(i)}_t<|J_i|/16\}.
    \end{split}
\end{equation*}
Notice $\tilde u_i \geq |J_i|/4$ implies 
\begin{align*}
    \{{U^{(i)}_{t}}<|J_i|/16\}&\subseteq \{\tilde u_i-{U^{(i)}_{t}}> 3|J_i|/16\},
    \\ \{\tilde u_i-{U^{(i)}_{t}}<|J_i|/16\}&\subseteq\{{U^{(i)}_{t}}> 3|J_i|/16\}.
\end{align*}
Similarly, $\tilde v_i \geq |J_i|/4$ implies
\begin{equation*}
    \begin{split}
        \{V^{(i)}_t<|J_i|/16\}&\subseteq \{\tilde v_i-V^{(i)}_t> 3|J_i|/16\},
        \\ \{\tilde v_i-V^{(i)}_t<|J_i|/16\}&\subseteq \{V^{(i)}_t> 3|J_i|/16\}.
    \end{split}
\end{equation*}

Put \[\tilde A_{i}\colonequals \{k\in J_i : \tilde \sigma (k)=1\},\enspace i=1,\dots,m.\]
Then, following the notation in \thref{expectedmagbound}, $|M_t(\tilde A_i)|=|{U^{(i)}_{t}}-(\tilde u_i - {U^{(i)}_{t}})|$ implies \[\{{U^{(i)}_{t}}<|J_i|/16\}\cup \{\tilde u_i-{U^{(i)}_{t}}<|J_i|/16\} \subseteq \{|M_t(\tilde A_i)|\geq |J_i|/8\}.\]
Similarly, $|M_t(J_i\setminus \tilde A_i)|=|V^{(i)}_t-(\tilde v_i - V^{(i)}_t)|$ implies \[ \{V^{(i)}_t<|J_i|/16\}\cup\{\tilde v_i-V^{(i)}_t<|J_i|/16\}\subseteq \{|M_t(J_i\setminus \tilde A_i)|\geq |J_i|/8\}.\] 
Combining all the above results, we obtain \[\{{\sigma^{(i)}_t} \notin \Theta_i\} \subseteq \{|M_t(\tilde A_i)|\geq |J_i|/8 \} \cup \{|M_t(J_i\setminus \tilde A_i)|\geq |J_i|/8\}.\]
A parallel argument for the primed chain shows \[\{{\sigma'^{(i)}_t} \notin \Theta_i\} \subseteq \{|M_t'(\tilde A_i)|\geq |J_i|/8 \} \cup \{|M_t'(J_i\setminus \tilde A_i)|\geq |J_i|/8\}.\]
In conclusion, 
\begin{align*}
    H_{i}(t) ^c&=\{{\sigma^{(i)}_t} \notin \Theta_i\} \cup \{{\sigma'^{(i)}_t} \notin \Theta_i\}
    \\ & \subseteq \{|M_t(\tilde A_i)|\geq |J_i|/8 \} \cup \{|M_t(J_i\setminus \tilde A_i)|\geq |J_i|/8\} 
    \\  & \qquad \qquad  \qquad \qquad \quad \enspace \; \cup \{|M_t'(\tilde A_i)|\geq |J_i|/8 \} \cup \{|M_t'(J_i\setminus \tilde A_i)|\geq |J_i|/8\}.
\end{align*}

Define \begin{equation*}
    B\colonequals \bigcup_{t\in [t_n(\gamma), t_n(2\gamma)]} \{|M_t(\tilde A_i)|\geq |J_i|/8\} , \quad Y \colonequals \sum_{t\in [t_n(\gamma), t_n(2\gamma)]} \mathbbm 1_{\{ | M_t(\tilde A_i)| \geq |J_i|/16 \}}.
\end{equation*}
Since $M_t(\tilde A_i)$ has increments in $\{-1,0,1\}$, we have $B\subseteq \{Y\geq |J_i|/16\}$.
By Chebyshev's inequality, $\mathbb P_{\tilde\sigma,\sigma'}(B)\leq c\mathbb E_{\tilde\sigma,\sigma'}(Y)/n$ for some constant $c>0$.
From \thref{expectedmagbound}, for $t\geq t_n$, $\mathbb P_{\tilde\sigma,\sigma'}( | M_t(\tilde A_i)| \geq |J_i|/16 )=O(1/n)$, so $\mathbb E_{\tilde\sigma,\sigma'}(Y)=\gamma O(1)$.
Thus, $\mathbb P_{\tilde\sigma,\sigma'}(B)=\gamma O(1/n)$.
Similar results hold for $\bigcup_{t\in [t_n(\gamma), t_n(2\gamma)]} \{|M_t(J_i\setminus \tilde A_i)|\geq |J_i|/8\}$, $\bigcup_{t\in [t_n(\gamma), t_n(2\gamma)]} \{|M_t'( \tilde A_i)|\geq |J_i|/8\}$, and $\bigcup_{t\in [t_n(\gamma), t_n(2\gamma)]} \{|M_t'(J_i\setminus \tilde A_i)|\geq |J_i|/8\}$.
In conclusion, \begin{align*}
    \mathbb P_{\tilde\sigma,\sigma'}(H_i^c)=\mathbb P_{\tilde\sigma,\sigma'}\Biggl(\bigcup_{t\in [t_n(\gamma), t_n(2\gamma)]} H_i(t)^c\Biggr) \leq  4\gamma O(1/n),
\end{align*}
which proves our first claim.

From the first claim, \[\mathbb P_{\tilde \sigma,\sigma'}(H_{tot}^c) \leq \sum_{i=1}^m \mathbb P_{\sigma,\sigma'}(H_{i}^c)= \gamma O(1/n).\]

Now, condition on the event $H_M$.
Recalling the fact that \thref{postmagcoupling} assures $\mathbf S_t= \mathbf S_t'$ for $t \geq t_n(\gamma)$ on the event $H_M$, we can make ${R^{(i)}_{t}}$ stay zero after $\tau_{i,c}$, using the modified monotone update on $J_i$ whenever a site in $J_i$ is chosen to be updated.
Thus, on $H_M$, \[\tau_{tot,c}=\max_{1\leq i\leq m} \tau_{i,c}.\]

Our second claim is that  \begin{align*}
    \mathbb P_{\tilde \sigma,\sigma'}(\tau_{i,c} > t_n(2\gamma) , H_i , H_M)= O(1/\sqrt{\gamma}), \enspace i=1,\dots,m.
\end{align*}
From \thref{supermartingale} and \thref{postmagcoupling}, $\mathbb P_{\tilde \sigma,\sigma'}(\tau_{i,c} > t_n(2\gamma) , H_i , H_M|\sigma_{t_n(\gamma)},\sigma_{t_n(\gamma)}')\leq  {c|R^{(i)}_{t_n (\gamma)}|}/{\sqrt{n\gamma}}$ for some $c>0$.
Taking expectation yields, \[\mathbb P_{\tilde \sigma,\sigma'}(\tau_{i,c} > t_n(2\gamma) , H_i , H_M)\leq  \frac{c\mathbb E_{\tilde \sigma,\sigma'}|R^{(i)}_{t_n (\gamma)}|}{\sqrt{n\gamma}}\]
However, for any $t>0$, $|{R^{(i)}_{t}}|=|U_t'-U_t|=|M_t'(\tilde A_i)-M_t(\tilde A_i)|$, so from \thref{expectedmagbound}, $\mathbb E_{\tilde \sigma,\sigma'}|R^{(i)}_{t_n (\gamma)}| \leq \mathbb E_{\sigma'}|M_{t_n(\gamma)}'(\tilde A_i)|+\mathbb E_{\tilde \sigma}|M_{t_n(\gamma)}(\tilde A_i)| =O(\sqrt{n})$, which proves our second claim.

From the second claim,   \begin{align*}
    &\mathbb P_{\tilde \sigma,\sigma'}(\tau_{tot,c}>t_n( 2\gamma), H_{tot}, H_M) 
    \leq \sum_{i=1}^m \mathbb P_{\tilde \sigma,\sigma'}(\tau_{i,c}>t_n( 2\gamma), H_{tot}, H_M)
    \\&\leq \sum_{i=1}^m \mathbb P_{\tilde \sigma,\sigma'}(\tau_{i,c}>t_n( 2\gamma), H_{i}, H_M)
    = O(1/\sqrt{\gamma}).
\end{align*}
Combining all the above results,   \begin{align*}
    &\mathbb P_{\tilde \sigma,\sigma'}(\tau_{tot,c}>t_n( 2\gamma)) 
    \\&\leq \mathbb P_{\tilde \sigma,\sigma'}(\tau_{tot,c}>t_n( 2\gamma), H_{tot},H_M)+\mathbb P_{\tilde \sigma,\sigma'}(H_{tot}^c) +\mathbb P_{\tilde \sigma,\sigma'}(H_M^c)  
    \\&= O(1/\sqrt{\gamma})+\gamma O(1/n) +O(1/\sqrt{\gamma}).
\end{align*}
Finally, \begin{align*}
    d_n(t_n+(2\gamma+\delta)n) \leq O(1/\sqrt{\gamma}) + \gamma O(1/n)+O(1/n),
\end{align*} which gives us the result upon taking limits.
\end{proof}

\subsection{Lower Bound} \label{subsection4.2}
We first analyze the drift of magnetization chains.
Let $1\leq i\leq m$ and $\mathcal F_t$ be the $\sigma$-algebra generated by ${S^{(1)}_{t}},\dots, {S^{(m)}_{t}}$.
By a direct calculation,

\begin{align}
        \mathbb E[{S^{(i)}_{t+1}}-{S^{(i)}_{t}}|\mathcal F_t]  &= \frac{2}{n}p_i\frac{|J_i|-n{S^{(i)}_{t}}}{2|J_i|}r_+(\sum_{j\neq i}{S^{(j)}_{t}})  -\frac{2}{n}p_i\frac{|J_i|+n{S^{(i)}_{t}}}{2|J_i|}r_-(\sum_{j\neq i}{S^{(j)}_{t}}) \nonumber
        \\ &=\frac{2}{n}\frac{p_i-{S^{(i)}_{t}}}{2}r_+(\sum_{j\neq i}{S^{(j)}_{t}}) -\frac{2}{n}\frac{p_i+{S^{(i)}_{t}}}{2}r_-(\sum_{j\neq i}{S^{(j)}_{t}}) \nonumber
        \\ &= \frac{1}{n}\biggl(-{S^{(i)}_{t}}+p_i\tanh ({\beta} \sum_{j\neq i}{S^{(j)}_{t}})\biggr). \label{dynamics}
\end{align}

The following simple lemma is the main tool to get the lower bound in \thref{lowerbound}.
\begin{lem}[Proposition 7.9, \cite{Peres}]\thlabel{statistics}
Let $f\colon\mathcal{S} \to \mathbb R$ be a measurable function and $\nu_1$, $\nu_2$ be two probability measures on $\mathcal S$.
Let $\sigma_*^2 \colonequals \max\{\mathbb{V}\mathrm{ar}_{\nu_1}f, \ \mathbb{V}\mathrm{ar}_{\nu_2}f\}$.
If $|\mathbb E_{\nu_1}f -\mathbb E_{\nu_2}f|\geq r \sigma_*$, then \[\|\nu_1 -\nu_2\|_{TV}\geq 1-\frac{8}{r^2}\]
\end{lem}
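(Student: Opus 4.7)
The plan is to use Chebyshev's inequality on a cleverly chosen event that separates the two measures. Set $\mu_i \colonequals \mathbb E_{\nu_i} f$ for $i=1,2$; without loss of generality assume $\mu_1 \leq \mu_2$, so by hypothesis $\mu_2 - \mu_1 \geq r\sigma_*$. The natural distinguishing set is the super-level set of $f$ at the midpoint: define $A \colonequals \{x \in \mathcal S : f(x) \geq (\mu_1+\mu_2)/2\}$.

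The first key step is to bound $\nu_1(A)$ from above. On $A$ we have $f - \mu_1 \geq (\mu_2-\mu_1)/2 \geq r\sigma_*/2$, so by Chebyshev's inequality,
\[
\nu_1(A) \leq \nu_1\left(|f-\mu_1| \geq \tfrac{r\sigma_*}{2}\right) \leq \frac{\mathbb{V}\mathrm{ar}_{\nu_1}f}{(r\sigma_*/2)^2} \leq \frac{\sigma_*^2}{(r\sigma_*/2)^2} = \frac{4}{r^2}.
\]
The second key step is the symmetric lower bound on $\nu_2(A)$. On $A^c$, we have $\mu_2 - f > (\mu_2-\mu_1)/2 \geq r\sigma_*/2$, so again by Chebyshev's inequality,
\[
\nu_2(A^c) \leq \nu_2\left(|f-\mu_2| \geq \tfrac{r\sigma_*}{2}\right) \leq \frac{4}{r^2},
\]
yielding $\nu_2(A) \geq 1 - 4/r^2$. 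Combining,
\[
\|\nu_1-\nu_2\|_{TV} \geq \nu_2(A) - \nu_1(A) \geq 1 - \frac{8}{r^2}.
\]

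There is essentially no obstacle here: the proof is a direct application of Chebyshev's inequality applied twice, once under each measure, to a common threshold event placed halfway between the two means. The only subtlety is the choice of the threshold at the midpoint $(\mu_1+\mu_2)/2$, which is what makes the two Chebyshev bounds come out symmetrically with the factor $r/2$ in the denominator, producing the constant $8$ in the final bound.
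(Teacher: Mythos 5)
Your proof is correct and is the standard argument from the cited reference (Levin--Peres, Proposition 7.9); the paper itself states this lemma as a citation and does not reprove it. The midpoint threshold set $A$, the two Chebyshev applications (one under each measure), and the final comparison $\|\nu_1-\nu_2\|_{TV}\geq \nu_2(A)-\nu_1(A)$ are exactly the intended route, so there is nothing to flag.
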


Positive starting configurations give us the following result.
\begin{lem}\thlabel{finallemma}
Let $\mathbf s\geq \mathbf 0$ be the starting magentization.
Then, for $t\geq0$, \[  \mathbb E_{\mathbf s}\|\mathbf S_t\|_1 \leq  g^t\left(\sum_{i=1}^m \frac{(s^{(i)})^2}{p_i}\right)^{1/2}+O(1/\sqrt{n}).\]
\end{lem}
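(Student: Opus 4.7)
The plan is to reduce $\mathbb E_{\mathbf s}\|\mathbf S_t\|_1$ to the already-established contraction for the magnetization chain, and absorb the error between $\mathbb E|S^{(i)}_t|$ and $|\mathbb E S^{(i)}_t|$ into the $O(1/\sqrt n)$ term using the uniform variance bound in \thref{magvariancebound}.

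Here are the steps I would carry out. First, observe that since $\mathbf s\geq \mathbf 0$ we also have $\mathbf s\geq -\mathbf s$, so $(\mathbf s,-\mathbf s)$ is a monotone pair. By the symmetry remark after \thref{magmarkov}, the chain started from magnetization $-\mathbf s$ has the same distribution as minus the chain started from $\mathbf s$; in particular $\mathbb E_{-\mathbf s}S^{(i)}_t=-\mathbb E_{\mathbf s}S^{(i)}_t$. Applying \thref{generalcontraction} to the pair $(\mathbf s,-\mathbf s)$ gives
\[
    0\;\leq\;\mathbb E_{\mathbf s}S^{(i)}_t-\mathbb E_{-\mathbf s}S^{(i)}_t\;=\;2\mathbb E_{\mathbf s}S^{(i)}_t\;\leq\;2\,\mathbf e_i^T\mathbf A^t\mathbf s,
\]
so that $\mathbb E_{\mathbf s}S^{(i)}_t\geq 0$ and $\mathbb E_{\mathbf s}S^{(i)}_t\leq (\mathbf A^t\mathbf s)^{(i)}$.

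Second, summing over $i$ and using the $\ell^1$-bound from \thref{newlemma} (not the per-coordinate bound, which would cost an extra factor of $\sum_j\sqrt{p_j}$),
\[
    \sum_{i=1}^m\mathbb E_{\mathbf s}S^{(i)}_t\;\leq\;\|\mathbf A^t\mathbf s\|_1\;\leq\;g^t\left(\sum_{i=1}^m\frac{(s^{(i)})^2}{p_i}\right)^{1/2}.
\]
Since each $\mathbb E_{\mathbf s}S^{(i)}_t\geq 0$, the left-hand side is also $\sum_i|\mathbb E_{\mathbf s}S^{(i)}_t|$.

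Third, to pass from $|\mathbb E_{\mathbf s}S^{(i)}_t|$ to $\mathbb E_{\mathbf s}|S^{(i)}_t|$, use Jensen together with $\sqrt{a^2+b^2}\leq a+b$ for $a,b\geq 0$:
\[
    \mathbb E_{\mathbf s}|S^{(i)}_t|\;\leq\;\sqrt{\mathbb E_{\mathbf s}(S^{(i)}_t)^2}\;=\;\sqrt{(\mathbb E_{\mathbf s}S^{(i)}_t)^2+\mathbb{V}\mathrm{ar}_{\mathbf s}(S^{(i)}_t)}\;\leq\;\mathbb E_{\mathbf s}S^{(i)}_t+\sqrt{\mathbb{V}\mathrm{ar}_{\mathbf s}(S^{(i)}_t)}.
\]
Summing over $i$, applying Cauchy-Schwarz to $\sum_i\sqrt{\mathbb{V}\mathrm{ar}_{\mathbf s}S^{(i)}_t}\leq\sqrt{m}\sqrt{\sum_i\mathbb{V}\mathrm{ar}_{\mathbf s}S^{(i)}_t}$, and invoking \thref{magvariancebound} (which gives $\sum_i\mathbb{V}\mathrm{ar}_{\mathbf s}S^{(i)}_t=O(1/n)$ with constants depending only on $p_1,\dots,p_m,\beta$), the second term contributes $O(1/\sqrt n)$. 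Combining with the contraction bound from the previous step yields the claim.

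There is no serious obstacle: the work is a straightforward assembly of \thref{generalcontraction}, \thref{newlemma}, and \thref{magvariancebound}. The only point requiring care is the choice between the two inequalities of \thref{newlemma}; using the per-coordinate one and summing would produce a spurious factor $\sum_j\sqrt{p_j}$, so one must instead sum first (yielding $\|\mathbf A^t\mathbf s\|_1$) and apply the $\ell^1$-bound directly.
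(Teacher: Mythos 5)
Your proof is correct, and it takes a genuinely cleaner route for the key first step. The paper obtains the intermediate bound $\mathbf 0 \leq \mathbb E_{\mathbf s}\mathbf S_t \leq \mathbf A^t\mathbf s$ by coupling the chain from $\mathbf s$ against a chain started from a mixture distribution $\nu$ that places mass $\tfrac12$ on each of $\pm(\tfrac1n,\dots,\tfrac1n)$ (chosen because $\mathbb E_\nu\mathbf S_t=\mathbf 0$ by symmetry, and because these points are the smallest accessible magnetizations near $\mathbf 0$); this forces a parity case analysis on $|J_i|$ to ensure the comparison states lie in $\mathcal S$ and are dominated by $\mathbf s$. You instead exploit that $\mathbf s\geq \mathbf 0$ implies $(\mathbf s,-\mathbf s)$ is a monotone pair with $-\mathbf s\in\mathcal S$, apply \thref{generalcontraction} once, use the symmetry $\mathbb E_{-\mathbf s}S^{(i)}_t=-\mathbb E_{\mathbf s}S^{(i)}_t$, and divide by $2$. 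This arrives at exactly the same bound $\mathbf 0\leq\mathbb E_{\mathbf s}\mathbf S_t\leq\mathbf A^t\mathbf s$ but eliminates both the mixture and the parity bookkeeping. After that point the two proofs coincide: sum over $i$, use the $\ell^1$-bound from \thref{newlemma}, and absorb the gap between $|\mathbb E S^{(i)}_t|$ and $\mathbb E|S^{(i)}_t|$ into $O(1/\sqrt n)$ via Jensen, Cauchy--Schwarz, and \thref{magvariancebound}. Your remark about which of the two inequalities in \thref{newlemma} to use is well taken and matches what the paper does implicitly.
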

\begin{proof}
Consider the case that $|J_i|$ is odd for each $i=1,\dots,m$.
Let $\nu $ be the starting distribution such that $\mathbf s_+'=(\frac{1}{n},\dots,\frac{1}{n})$ with probability $\frac{1}{2}$ and $\mathbf s_-'=(-\frac{1}{n},\dots,-\frac{1}{n})$ with probability $\frac{1}{2}$.

By \thref{generalcontraction}, since $\mathbf s \geq\mathbf s_+'$ in this case, \begin{align*}
     \mathbf 0 &\leq \mathbb E_{\mathbf s,\nu} (\mathbf S_t-\mathbf S_t')
     \leq \frac{1}{2}\mathbf A^t(\mathbf s - \mathbf s_+')+\frac{1}{2}\mathbf A^t(\mathbf s - \mathbf s_-') =\mathbf A^t\mathbf s.
\end{align*}
However, $\mathbb E_{\nu} {S'^{(i)}_{t}}=0$ for $i=1,\dots,m$ by the remark after \thref{magmarkov}. 
Thus, $\mathbf 0 \leq \mathbb E_{\mathbf s} \mathbf S_t \leq \mathbf A^t \mathbf s$, so by \thref{newlemma}, \begin{align*}
    0 \leq \sum_{i=1}^m \mathbb E_{\mathbf s} {S^{(i)}_{t}} \leq \|\mathbf A^t \mathbf s\|_1  \leq  g^t \left(\sum_{i=1}^m \frac{(s^{(i)})^2}{p_i}\right)^{1/2}.
\end{align*}
From \thref{magvariancebound} and Cauchy-Schwartz inequality, since $0\leq \mathbb E_{\mathbf s}{S^{(i)}_{t}}$ for $i=1,\dots,m$, \begin{align*}
     &\mathbb E_{\mathbf s}\|\mathbf S_t\|_1=\sum_{i=1}^m \mathbb E_{\mathbf s}|{S^{(i)}_{t}}| \leq \sum_{i=1}^m\left(|\mathbb E_{\mathbf s}{S^{(i)}_{t}}|+\sqrt{\mathbb{V}\mathrm{ar}_{\mathbf s}{S^{(i)}_{t}}}\right)
    =\sum_{i=1}^m\mathbb E_{\mathbf s}{S^{(i)}_{t}}+\sum_{i=1}^m\sqrt{\mathbb{V}\mathrm{ar}_{\mathbf s}{S^{(i)}_{t}}}
    \\&\leq  g^t \Biggl(\sum_{i=1}^m \frac{(s^{(i)})^2}{p_i}\Biggr)^{1/2}+ \Biggl(m \sum_{i=1}^m\mathbb{V}\mathrm{ar} {S^{(i)}_{t}}\Biggr)^{1/2}
    = g^t \Biggl(\sum_{i=1}^m \frac{(s^{(i)})^2}{p_i}\Biggr)^{1/2} + O(\frac{1}{\sqrt{n}}).
\end{align*}

Other cases of $|J_i|$ can similarly be shown by considering $0$ instead of $\frac{1}{n}$ whenever the partition has even number of sites.
\end{proof}

Finally, we prove the lower bound.
\begin{thm}\thlabel{lowerbound}
For $\beta< \beta_{cr}$, we have \[\lim_{\gamma \to \infty }\liminf_{n \to \infty}d_n(t_n -\gamma n)=1.\]
\end{thm}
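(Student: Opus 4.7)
The plan is to apply the distinguishing-statistic bound \thref{statistics} with the linear functional $W(\sigma) = \mathbf{a}^T \mathbf{S}(\sigma) = \sum_{i=1}^m a_i S^{(i)}(\sigma)$, where $\mathbf{a}$ is the left Perron eigenvector of $\mathbf{A}$, and with the chain started from $\sigma_0 \equiv +1$. Under stationarity $\mathbb{E}_\mu W = 0$ by \thref{zerospin}, while $W_0 = \mathbf{a}^T \mathbf{p} > 0$. The variance is uniformly small: by \thref{magvariancebound} and Cauchy--Schwarz,
\[\mathbb{V}\mathrm{ar}_\sigma(W_t) \leq \Bigl(\sum_{i=1}^m a_i^2\Bigr)\sum_{i=1}^m \mathbb{V}\mathrm{ar}_\sigma S^{(i)}_t = O(1/n)\]
for every starting state $\sigma$ and every $t \geq 0$.

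The crux is a matching lower bound $\mathbb{E}_+ W_t \geq c\, g^t$ for some constant $c = c(\beta,p_1,\ldots,p_m) > 0$. Combining \eqref{dynamics}, the eigenvalue equation $\mathbf{a}^T \mathbf{A} = g\mathbf{a}^T$, and the Taylor expansion $\tanh x = x + R(x)$ with $|R(x)|\leq |x|^3/3$, the drift decomposes as
\[\mathbb{E}[W_{t+1}\mid \mathcal F_t] = g W_t + \frac{1}{n}\sum_{i=1}^m a_i p_i\, R\Bigl(\beta \sum_{j\neq i}S^{(j)}_t\Bigr),\]
with the remainder bounded in absolute value by $C\|\mathbf{S}_t\|_1^3/n$. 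Iterating, and using \thref{magvariancebound} together with the mean bound $|\mathbb{E}_+ S^{(i)}_s|\leq \sqrt{p_i}\,g^s$ from \thref{lemma} to obtain $\mathbb{E}_+\|\mathbf{S}_s\|_1^3 \leq \mathbb{E}_+\|\mathbf{S}_s\|_1^2 \leq C(g^{2s}+1/n)$, summing the resulting geometric series yields
\[\mathbb{E}_+ W_t \geq g^t W_0 - \frac{C' g^t}{\upsilon} - \frac{C'}{n\upsilon}.\]
When $W_0 > C'/\upsilon$ the lower bound $\mathbb{E}_+ W_t \geq cg^t$ follows directly; in the general case one performs a two-phase refinement: a finite burn-in window $[0,Tn]$ controlled by a Gronwall comparison with the mean-field ODE $\dot m_i = -m_i + p_i\tanh(\beta\sum_{j\neq i}m_j)$ secures a positive value $\mathbf{a}^T \mathbf{m}(T) \sim C_0 e^{-\upsilon T}$, after which one iterates the linear recursion on $[Tn,t]$, where $\|\mathbf{S}_s\|_1 = O(1/\sqrt n)$ in probability makes the cubic remainder genuinely negligible.

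With the lower bound in hand, set $t = t_n - \gamma n$. Proposition \thref{upsilon} gives $g^t = (1+o(1))\,e^{\gamma\upsilon}/\sqrt n$, so $|\mathbb{E}_+ W_t - \mathbb{E}_\mu W_t| \geq (c/2)\,e^{\gamma\upsilon}/\sqrt n$, while both standard deviations are $O(1/\sqrt n)$. Feeding this into \thref{statistics} yields $d_n(t_n - \gamma n) \geq 1 - C'' e^{-2\gamma\upsilon}$, and taking $\gamma \to \infty$ closes the proof.

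The main obstacle is the lower bound $\mathbb{E}_+ W_t \geq cg^t$: the cubic term in $\tanh(x)-x$ has the adverse sign on the positive orthant, so a naive iteration of the drift only delivers an upper bound. The ODE-based burn-in argument is what ensures the loss from the nonlinearity is a multiplicative constant rather than a mode decaying faster than $g^t$.
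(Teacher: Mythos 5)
Your reduction to the distinguishing-statistic bound \thref{statistics}, the choice of functional $W = \mathbf{a}^T\mathbf{S}$, the variance control via \thref{magvariancebound}, and the asymptotics $g^{t_n-\gamma n}\asymp e^{\gamma\upsilon}/\sqrt n$ all match the paper. The genuine issue lies in the lower bound $\mathbb{E}\, W_t \geq c\,g^t$, which you correctly flag as the crux but then leave unresolved. Starting from the all-plus state forces $\|\mathbf S_0\|_1=1$, and your own iteration then yields
\[
\mathbb{E}_+ W_t \geq g^t W_0 - \frac{C'\,g^{t}}{\upsilon} - \frac{C'}{n\upsilon},
\]
where both the main term and the nonlinear error are of order $g^t$. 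Since $W_0=\sum_i a_i p_i$ is a fixed constant while $\upsilon=1-\beta/\beta_{cr}$ can be arbitrarily small as $\beta\uparrow\beta_{cr}$, the quantity $W_0-C'/\upsilon$ has no definite sign, so the ``directly'' branch of your argument covers only part of the high-temperature regime. The ODE burn-in you sketch for the remaining case is not a filled gap: it would require a concentration estimate showing that $\mathbf S_{sn}$ tracks the mean-field ODE (a propagation-of-chaos statement nowhere established in the paper), plus an analysis of the nonlinear ODE guaranteeing that $\mathbf a^T\mathbf m(T)$ stays positive and decays at rate exactly $\upsilon$, and your assertion that $\|\mathbf S_s\|_1=O(1/\sqrt n)$ after a finite-length (in $n$-units) burn-in is not correct --- after $Tn$ steps the magnetization is still order $e^{-\upsilon T}=\Theta(1)$, not $O(1/\sqrt n)$.

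The paper sidesteps all of this by exploiting the freedom to choose the starting state, which you did not use. Since $d_n(t)$ is a maximum over $\sigma_0$, one is free to start from any configuration with magnetization $\mathbf s=\zeta\mathbf p$ for a small constant $\zeta>0$. With the quadratic bound $\tanh x\geq x-x^2/3$, the nonlinear error per step scales like $\zeta^2 g^{2s}/n$ while the main term scales like $\zeta g^t$, so after summing, the ratio of error to main term is of order $\beta^2\zeta/(3\upsilon)$ and becomes strictly less than one by choosing $\zeta<3\upsilon/\beta^2$. This is the one-line fix that makes the iteration close in a single phase, with no ODE machinery; it is worth internalizing that lower bounds of this type are worst-case, so tuning the initial condition is always available.
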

\begin{proof}
Since the magnetization chain is a projection of the original chain, it suffices to provide a lower bound on the total variation norm of the magnetization chain.
Using $\tanh x \geq x-x^2/3$ for $x\in\mathbb R$, from equations \eqref{dynamics}, we have \begin{align*}
    \mathbb E({S^{(i)}_{t+1}}|\mathcal F_t) &\geq (1-\frac{1}{n}){S^{(i)}_{t}}+ \frac{p_i}{n}\Biggl(\beta \sum_{j\neq i}{S^{(j)}_{t}} - \frac{\beta^2 (\sum_{j\neq i}{S^{(j)}_{t}})^2}{3}\Biggr)
\end{align*}
for each $i=1,\dots,m$.
In the matrix form, \begin{align*}
    \mathbb E(\mathbf S_{t+1}|\mathcal F_t) &\geq \mathbf A \mathbf S_t - \mathbf x
\end{align*}
where $\mathbf x = \frac{\beta ^2}{3n} (p_1(\sum_{j\neq 1}{S^{(j)}_{t}})^2,\dots, p_m(\sum_{j\neq m}{S^{(j)}_{t}})^2)^T$.
Recall the definition of $\mathbf a^T \colonequals (a_1, \dots, a_m)>\mathbf 0$ with $\|\mathbf a\|_1=1$ being the left eigenvector of $\mathbf A$ with eigenvalue $g$.
Then $\mathbb E(\mathbf a^T \mathbf S_{t+1}|\mathcal F_t )\geq  \mathbf a^T \mathbf A\mathbf S_t - \mathbf a^T \mathbf x= g\mathbf a^T \mathbf S_t - \mathbf a^T \mathbf x$, i.e.,
\begin{equation}\label{compute}
     \mathbb E\Bigl(\sum_{i=1}^m a_i{S^{(i)}_{t+1}}|\mathcal F_t\Bigr) \geq g \sum_{i=1}^m a_i {S^{(i)}_{t}}- \frac{\beta^2}{3n} \sum_{i=1}^m a_ip_i\biggl(\sum_{j\neq i}{S^{(j)}_{t}}\biggr)^2 . 
\end{equation}
Observe that \begin{align*}
    \sum_{i=1}^m a_ip_i\biggl(\sum_{j\neq i}{S^{(j)}_{t}}\biggr)^2 \leq  \sum_{k=1}^m a_kp_k\bigg( \sum_{j=1}^m |S^{(j)}_{t}| \bigg)^2= \biggl(\sum_{k=1}^m a_k p_k\biggr)\|\mathbf S_t\|_1^2.
\end{align*}
Thus, upon taking expectation in equation \eqref{compute}, \begin{align*}
    \mathbb E\left(\sum_{i=1}^m a_i{S^{(i)}_{t+1}}\right) \geq g \mathbb E\left(\sum_{i=1}^m a_i {S^{(i)}_{t}}\right)- \frac{\beta^2}{3n}\left(\sum_{i=1}^m a_ip_i\right)\mathbb E \|\mathbf S_t\|_1^2  .
\end{align*}

We claim that, \begin{align*}
    \mathbb E\|\mathbf S_t\|_1^2 \leq (\mathbb E\|\mathbf S_t\|_1)^2 + O(1/n).
\end{align*}
Since $\mathbb E\|\mathbf S_t\|_1^2 = (\mathbb E\|\mathbf S_t\|_1)^2 + \mathbb{V}\mathrm{ar}\|\mathbf S_t\|_1$, it suffices to show $\mathbb{V}\mathrm{ar}\|\mathbf S_t\|_1 \leq O(1/n)$.
However, from \thref{magvariancebound}, \begin{align*}
 \mathbb{V}\mathrm{ar}\|\mathbf S_t\|_1 &=\sum_{i=1}^m \mathbb{V}\mathrm{ar}|{S^{(i)}_{t}}| + 2\sum_{i> j}\mathrm{Cov}(|{S^{(i)}_{t}}|,|{S^{(j)}_{t}}|)
 \\&\leq \sum_{i=1}^m \mathbb{V}\mathrm{ar}{S^{(i)}_{t}} + 2\sum_{i>j}\sqrt{\mathbb{V}\mathrm{ar}{S^{(i)}_{t}}}\sqrt{\mathbb{V}\mathrm{ar}{S^{(j)}_{t}}}
 \\&\leq \sum_{i=1}^m \mathbb{V}\mathrm{ar}{S^{(i)}_{t}} + \sum_{i>j}(\mathbb{V}\mathrm{ar}{S^{(i)}_{t}}+\mathbb{V}\mathrm{ar}{S^{(j)}_{t}})=m\sum_{i=1}^m\mathbb{V}\mathrm{ar}{S^{(i)}_{t}}=O(1/n), \end{align*}
which proves the claim.

Put $Z_t \colonequals \sum_{i=1}^m a_i {S^{(i)}_{t}}/g^t$.
Then, from the claim above, \begin{align*}
\mathbb EZ_{t+1}-\mathbb EZ_t\geq -\frac{\beta^2 \sum_{i} a_ip_i}{3ng^{t+1}}\left((\mathbb E\|\mathbf S_t\|_1)^2 + O(1/n)\right).
\end{align*}
Assume that $\mathbf s \geq \mathbf 0$ is a non-negative starting magnetization.
Recalling the definition $\upsilon \colonequals n(1-g)$, from \thref{finallemma} and the fact $\sum_{i} {(s^{(i)})^2}/{p_i} \leq 1$,  \begin{align*}
    \mathbb E_{\mathbf s}Z_{t+1}-\mathbb E_{\mathbf s}Z_t &\geq -\frac{\beta^2\sum_{i} a_ip_i}{3ng^{t+1}}\Biggl(\biggl(g^t\biggl(\sum_{i} {(s^{(i)})^2}/{p_i}\biggr)^{1/2}+O(1/\sqrt{n})\biggr)^2+O(1/n)\Biggr)
    \\&\geq -\frac{\beta^2\sum_{i} a_ip_i }{3(n-\upsilon)}\Biggl(g^t\sum_{i} {(s^{(i)})^2}/{p_i}+O(1/\sqrt{n})+\frac{1}{g^t}O(1/n)\Biggr).
\end{align*}
Iterating from $0$ to $t-1$, \begin{align*}
    \mathbb E_{\mathbf s}Z_{t}-Z_0 &\geq -\frac{\beta^2\sum_{i}a_ip_i}{3(n-\upsilon)}\left(\frac{1-g^t}{\upsilon/n}\sum_{i=1}^m \frac{(s^{(i)})^2}{p_i}+tO(1/\sqrt{n})+\frac{n-\upsilon}{\upsilon}(\frac{1}{g^t}-1)O(1/n)\right)
    \\&=-\frac{\beta^2\sum_{i}a_ip_i}{3\upsilon(1-\upsilon/n)}(1-g^t)\sum_{i=1}^m \frac{(s^{(i)})^2}{p_i}-\frac{\beta^2\sum_{i}a_ip_i}{3(n-\upsilon)}tO(1/\sqrt{n})
    \\&\quad\;-\frac{\beta^2\sum_{i}a_ip_i}{3\upsilon}(\frac{1}{g^t}-1)O(1/n).
\end{align*}

For brevity, let us prefer to use $\upsilon$ rather than use $\beta_{cr}$ in view of \thref{upsilon}.
Consider the step $t_*\colonequals t_n-\gamma n/\upsilon=\frac{1}{2\upsilon} n\ln n- \frac{\gamma n}{\upsilon}$.
Observe that $1-1/x \geq e^{-1/(x-1)}$ for $x>1$ implies \[g^{t_*} \geq \frac{e^{\gamma}}{n^{n/(2(n-\upsilon))}}.\]
Then \begin{align*}
    \mathbb E_{\mathbf s}Z_{t_*}-\sum_{i=1}^ma_is_i \geq &-\frac{\beta^2\sum_{i}a_ip_i}{3\upsilon(1-{\upsilon}/{n})}\left(1-\frac{e^{\gamma }}{n^{{n}/{(2(n-\upsilon))}}}\right)\sum_{i=1}^m \frac{(s^{(i)})^2}{p_i}
    \\&-\frac{\beta^2\sum_{i}a_ip_i}{3(n-\upsilon)}\left(\frac{1}{2\upsilon} n\ln n- \frac{\gamma n}{\upsilon}\right)O({1}/{\sqrt{n}})
    \\&-\frac{\beta^2\sum_{i}a_ip_i}{3\upsilon}\left(\frac{n^{n/(2(n-\upsilon))}}{e^{\gamma }}-1\right)O({1}/{n}).
\end{align*}
The right-hand side of the above inequality converges to $-\frac{\beta^2\sum_{i}a_ip_i\sum_{i} {(s^{(i)})^2}/{p_i}}{3\upsilon}$ as $n\to \infty$ for every $\gamma>0$.

We claim that if $n$ is large enough, then there exists $ \mathbf s > \mathbf 0 $ such that \[\sum_{i=1}^ma_is_i-\frac{\beta^2\sum_{i}a_ip_i\sum_{i} {(s^{(i)})^2}/{p_i}}{3\upsilon}>0.\] 
Consider $\mathbf s =\zeta \mathbf p$ where $0<\zeta <1$ is a constant to be determined.
We want to find $\zeta$ such that \[\sum_{i=1}^ma_ip_i\zeta-\frac{\beta^2\sum_{i}a_ip_i\sum_{i} {(p_i\zeta)^2}/{p_i}}{3\upsilon}>0,\] which is equivalent to \[3\upsilon>\beta^2 \zeta.\]
From \thref{upsilon}, $\upsilon >0$, so $\frac{3\upsilon}{\beta^2} \mathbf p>\mathbf s>\mathbf 0  $ assures that the inequality in the claim holds, and such a positive magnetization $\mathbf s \in\mathcal S$ exists since $n$ is large and $0 \leq \beta<\beta_{cr}$ (if $\beta=0$, choose $\mathbf s= \mathbf p$).
 
By the last claim, for large $n$, there exists $\mathbf s \in \mathcal S $ and $\varepsilon>0$ such that \[\mathbb E_{\mathbf s}(\sum_{i=1}^m a_iS^{(i)}_{t_*}) \geq 2\varepsilon g^{t_*}\geq 2\varepsilon \frac{e^{\gamma}}{n^{n/(2(n-\upsilon))}}\geq \varepsilon\frac{e^{\gamma}}{\sqrt{n}}. \]
\thref{magvariancebound} and the Cauchy-Schwartz inequality imply $\mathbb{V}\mathrm{ar}(\sum_{i=1}^m a_iS^{(i)}_{t_*}) = O(\frac{1}{n})$ as $n\to \infty$.
Thus, by \thref{zerospin} and \thref{statistics}, for some $c>0$, \begin{align*}
    \lim _{\gamma \to \infty }\liminf_{n\to \infty}d_n(t_n- \frac{\gamma n}{\upsilon}  ) \geq \lim_{\gamma \to \infty}1-\frac{c}{\varepsilon^2 e^{2\gamma}}=1.
\end{align*}

\end{proof}

\section{Exponentially slow mixing in the low temperature regime}\label{section6}
Using a standard bottleneck ratio argument, we can show that the mixing time for the Glauber dynamics is exponential in the low temperature regime. 
The bottleneck ratio is defined as \[\Phi \colonequals \min_{A: \mu(A) \leq 1/2} \frac{\sum_{x\in A,y\notin A} \mu(x)P(x,y)}{\mu(A)}\] where $P$ is the transition matrix of the Glauber dynamics. 
The bottleneck ratio gives a lower bound of the mixing time (see \parencite[Theorem 7.4]{Peres}): \[t_{\mathrm{mix}}\geq \frac{1}{4\Phi}.\]

We need another characterization of the critical temperature $\beta_{cr}$.
\begin{lem}\thlabel{slowmixinglemma}
We have that \[\beta_{cr}= \frac{\sum_i a_i^2 p_i}{(\sum_i a_i p_i)^2 - \sum_i a_i ^2 p_i ^2}\]
\end{lem}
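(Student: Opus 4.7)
The strategy is to reduce both expressions for $\beta_{cr}$ to a single algebraic identity among the quantities $S\colonequals\sum_i a_i p_i$, $T\colonequals\sum_i a_i^2 p_i$, and $Q\colonequals\sum_i a_i^2 p_i^2$. Given that \thref{betacrdef} already tells us $\beta_{cr}=1/((m-1)S)$, the claim is equivalent to the single identity
\[
S^2-Q=(m-1)\,S\,T.
\]

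The key input is the eigenvector equation $\mathbf a^T \mathbf A=g\mathbf a^T$ read componentwise. Writing the $i$th component explicitly and using the formula $g=1-1/n+(m-1)\beta S/n$ from the end of the proof of \thref{upsilon}, the terms involving $1/n$, $\beta$, and $g$ all collapse to give the clean relation
\[
a_i\bigl((m-1)S+p_i\bigr)=S\qquad (i=1,\dots,m),
\]
or equivalently $a_i p_i=S(1-(m-1)a_i)$. (Note that this is consistent with $\mathbf a^T$ being independent of $\beta$, as claimed in \thref{upsilon}.)

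With this identity in hand, the rest is a one-line computation: multiply both sides of $a_i p_i=S(1-(m-1)a_i)$ by $a_i p_i$ and sum over $i$, giving
\[
Q=\sum_i(a_i p_i)^2=S\sum_i a_i p_i-(m-1)S\sum_i a_i(a_i p_i)=S^2-(m-1)ST.
\]
Rearranging yields the desired identity $S^2-Q=(m-1)ST$, so $T/(S^2-Q)=1/((m-1)S)=\beta_{cr}$.

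There is no serious obstacle here; the only mild subtlety is extracting the relation $a_i((m-1)S+p_i)=S$ cleanly from the eigenvector equation, which requires using the explicit formula for $g$ in terms of $S$ (rather than working symbolically with $g$ alone). Once that reduction is made, the proof is a short computation.
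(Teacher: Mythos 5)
Your proposal is correct and follows essentially the same route as the paper: both extract the componentwise relation $a_k\bigl(p_k+(m-1)\sum_i a_ip_i\bigr)=\sum_i a_ip_i$ from the eigenvector equation $\mathbf a^T\mathbf A=g\mathbf a^T$ together with the expression for $g$, then multiply by $a_k p_k$ and sum over $k$ to obtain the stated identity. The only difference is cosmetic (your $S,T,Q$ shorthand and writing $1/\beta_{cr}$ as $(m-1)S$ up front).
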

\begin{proof}
From $\mathbf a^T \mathbf A= g \mathbf a^T$, equation \eqref{betacrdef}, and \thref{upsilon}, we have \[\sum_i a_i p_i=\Big(p_k+\frac{1}{\beta_{cr}} \Big)a_k\] for each $k=1,\dots,m$.
Multiplying $a_k p_k$ to both sides and summing over $k$ yields the result.
\end{proof}

\begin{proof}[Proof of \thref{lowtempresult}]
It suffices to show that $\Phi \leq c_1 \exp(-c_2 n)$ for some positive constants $c_1, c_2 >0$.
By symmetry of the Hamiltonian, we have that $\mu (A) \leq 1/2$ where $A \colonequals \{\sigma : \sum_i S^{(i)}(\sigma) > 0 \}$.
Since the only way to go from $A$ to $A^c$ is to go through $B\colonequals \{\sigma : | \sum_i S^{(i)}(\sigma) | \leq 1/n \}$, it holds that \[\sum_{x\in A,y\notin A} \mu(x)P(x,y) \leq \mu(B).\]
Note that for any $\sigma \in \Omega$, \[\mu(\sigma)=\frac{\exp \bigg(\frac{\beta n}{2}\Big(\big(\sum_i S^{(i)}(\sigma)\big)^2-\sum_i \big(S^{(i)}(\sigma)\big)^2\Big)\bigg)}{Z(\beta)}.\]
By the Cauchy-Schwartz inequality,  \[\mu(B)\leq \binom{n}{\lceil n/2 \rceil} \frac{\exp \Big(\frac{\beta n}{2}\big(1-\frac{1}{m}\big) \big(\frac{1}{n}\big)^2\Big)}{Z(\beta)}\lesssim \binom{n}{\lceil n/2 \rceil}/Z(\beta)\] where $\lesssim$ denotes that the inequality holds for sufficiently large $n$ up to a constant not depending on $n$.
Using Stirling's formula, \[\Phi \lesssim \frac{\exp(n\ln 2)}{Z(\beta)\mu(A)}.\]

Now, consider the configurations with exactly $k_i n p_i$ many "$+$" spins in $J_i$ where $1/2\leq k_i\leq 1$ for each $i=1,\dots,m$ and there exists at least one $i$ such that $1/2 <k_i$.
These configurations are members of $A$ and there are at least $\prod_{i=1} ^m\binom{np_i}{k_inp_i}$ many such configurations.  
Using Stirling's formula again, we obtain \[
    Z(\beta)\mu(A) \gtrsim \Bigg(\frac{1}{\prod_{i=1}^m (1-k_i)^{p_i(1-k_i)}k_i^{p_i k_i}}\Bigg)^n e^{ \frac{\beta n}{2}\big((\sum_i (2k_i -1)p_i)^2-\sum_i  (2k_i -1)^2p_i^2\big)}
.\]
Define a function $f$ through the equation \[e^{nf(k_1,\dots,k_m)}\colonequals \Bigg(\frac{1}{\prod_{i=1}^m (1-k_i)^{p_i(1-k_i)}k_i^{p_i k_i}}\Bigg)^n e^{ \frac{\beta n}{2}\big((\sum_i (2k_i -1)p_i)^2-\sum_i  (2k_i -1)^2p_i^2\big)}. \]
Put $(k_1,\dots,k_m)= (1/2,\dots,1/2)+\gamma (v_1,\dots,v_m)$ where $v_i \geq 0$ for each $i=1,\dots,m$, $\gamma \in \mathbb R$, and $\sum_i v_i ^2 \neq 0$.
Fixing $v_i$'s, we can regard $f$ as a one-variable function of $\gamma$, say $f=f(\gamma)$, and this is equivalent to fixing a direction in $\mathbb R^m$.
A little calculation shows that \begin{align*}
    f(\gamma)&=2\beta \gamma^2 \bigg(\Big(\sum_i v_i p_i\Big)^2-\sum_i v_i^2 p_i^2 \bigg)
    \\& \quad -\sum_i p_i \big((1/2-\gamma v_i)\ln(1/2-\gamma v_i)+ (1/2+\gamma v_i)\ln(1/2 +\gamma v_i)\big)
    \\f'(\gamma)&= 4\beta \gamma \bigg(\Big(\sum_i v_i p_i\Big)^2-\sum_i v_i^2 p_i^2 \bigg)-\sum_i p_i v_i\big(-\ln(1/2-\gamma v_i)+ \ln(1/2 +\gamma v_i)\big)
    \\ f''(\gamma)&=4\beta \bigg(\Big(\sum_i v_i p_i\Big)^2-\sum_i v_i^2 p_i^2 \bigg)-\sum_ip_iv_i^2 \bigg(\frac{1}{1/2-\gamma v_i}+\frac{1}{1/2 +\gamma v_i}\bigg)
\end{align*}
where $'$ denotes a differentiation in $\gamma$.
Note that $f(0)= \ln 2$ and $f'(0)=0$.
Thus, it suffices to show that there is a direction $(v_1,\dots,v_m)$ such that $f''(0)>0$.
\thref{slowmixinglemma} shows that the direction $(v_1,\dots,v_m)=(a_1,\dots,a_m)$ satisfies $f''(0)>0$ whenever $\beta>\beta_{cr}$, which completes the proof.
\end{proof}
\begin{rmk}
Combined with the non-exponential mixing time of $O(n\ln n)$ whenever $\beta <\beta_{cr}$, the above proof shows that $\inf_{\mathbf v\geq \mathbf 0, \mathbf v\neq \mathbf 0}\frac{\sum_i v_i^2 p_i}{(\sum_i v_i p_i)^2 - \sum_i v_i ^2 p_i ^2}$ is achieved with the direction $(v_1,\dots,v_m)=\mathbf a^T$.
\end{rmk}

\begin{ack}
    The author would like to thank Professor Insuk Seo for introducing the problem and sharing his limitless insight through numerous discussions. The author also acknowledges an anonymous user at \textbf{math.stackexchange.com} \footnote{https://math.stackexchange.com/q/3553425} for the main idea of the proof in \thref{newlemma}.
    Finally, the author acknowledges the anonymous reviewers for their helpful comments and careful reading of the paper.
\end{ack}

\printbibliography

\end{document}